\def\newaliasedtheorem#1[#2]#3{
  \newaliascnt{#1@alt}{#2}
  \newtheorem{#1}[#1@alt]{#3}
  \expandafter\newcommand\csname #1@altname\endcsname{#3}
}
\numberwithin{equation}{section}
\newtheoremstyle{slanted}{\topsep}{\topsep}{\slshape}{}{\bfseries}{.}{.5em}{}
\theoremstyle{plain}
\newtheorem{theorem}{Theorem}[section]
\theoremstyle{definition}
\theoremstyle{remark}
\let\altphi\phi
\let\phi\varphi
\let\varphi\altphi
\let\altphi\undefined
\newcommand{\di}{\mathop{}\!\mathrm{d}}
\DeclareMathOperator{\supp}{supp}
\newcommand{\haus}{\mathscr{H}}
\newcommand{\dist}{\mathsf{d}}
\DeclareMathOperator{\RCD}{RCD}
\DeclareMathOperator{\CD}{CD}
\newfont{\tmpf}{cmsy10 scaled 2500}
\newcommand{\intav}{{\mathop{\int\kern-10pt\rotatebox{0}{\textbf{--}}}}}
\newcommand{\intavv}{{\mathop{\int\kern-8pt\rotatebox{0}{\textbf{--}}}}}
\renewcommand{\ }{\text{ }}
\def\<{\langle}
\def\>{\rangle}
\begin{document}

\title{Gap phenomena under curvature restrictions}
 
\author{
Shouhei Honda\thanks{Graduate School of Mathematical Sciences, The University of Tokyo, \url{shouhei.@ms.u-tokyo.ac.jp}} \,\,and Andrea Mondino\thanks{Mathematical Institute, University of Oxford, \url{andrea.mondino@maths.ox.ac.uk}}
}

\maketitle
\abstract{In the paper we discuss  gap phenomena of three different types related to Ricci (and sectional) curvature.
The first type is about \textit{spectral gaps}. The second type is about \textit{sharp gap metric-rigidity}, originally due to Anderson. The third is about \textit{sharp gap topological-rigidity}.  We also propose open problems along these directions. 
\section{Spectral gap I: Poincar\'e inequality for functions}
Throughout the paper, $M^n$ will denote a complete Riemannian manifold of dimension $n$ without boundary. Moreover we follow standard notations, including $\mathrm{Ric}=\mathrm{Ric}(M^n), \mathrm{diam}=\mathrm{diam}(M^n)$, and $\mathrm{vol}=\mathrm{vol}(M^n)$, in Riemannian geometry.

In this section, under the standing assumption that $M^n$ is compact, we are interested in the best constant $C$, denoted by $C_P=C_P(M^n)$, in the Poincar\'e inequality for functions:
\begin{equation}
\int_{M^n}\left|f-\intav_{M^n}f\di \mathrm{vol}\right|^2\di \mathrm{vol}\le C\int_{M^n}|\nabla f|^2\di \mathrm{vol},\quad \forall f \in C^{\infty}(M^n),
\end{equation}
where $\intavv_{M^n}$ denotes the average over $M^n$. Note that $C_P$ coincides with the inverse of the first positive eigenvalue, denoted by $\mu=\mu(M^n)$, of the positive Laplacian $-\Delta$:
\begin{equation}
C_P=\frac{1}{\mu}.
\end{equation}
Let us recall results by Lichnerowicz \cite{Lic},  Li-Yau \cite{LY} and Buser \cite{Buser}.
\begin{theorem}[Lichnerowicz, Li-Yau and Buser]\label{BLY}
If 
\begin{equation}
\mathrm{Ric} \ge -(n-1),\quad \mathrm{diam} \le d
\end{equation}
then
\begin{equation}
\mu \ge c(n, d)>0, 
\end{equation}
or, equivalently,
\begin{equation}
C_P\le \frac{1}{c(n, d)}.
\end{equation}
\end{theorem}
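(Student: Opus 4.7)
The plan is to combine the Bochner identity with a gradient estimate for first eigenfunctions, and to convert this into a lower bound on $\mu$ by integrating along a minimising geodesic joining the extrema of such an eigenfunction. First, I would start from Bochner: for a first eigenfunction $f$ with $-\Delta f=\mu f$ and $\int_{M^n}f\di\mathrm{vol}=0$,
\[
\tfrac12\Delta|\nabla f|^2=|\Hess f|^2-\mu|\nabla f|^2+\mathrm{Ric}(\nabla f,\nabla f).
\]
Integrating over $M^n$ and using $|\Hess f|^2\ge(\Delta f)^2/n=\mu^2f^2/n$ together with the identity $\int_{M^n}|\nabla f|^2\di\mathrm{vol}=\mu\int_{M^n}f^2\di\mathrm{vol}$, one recovers Lichnerowicz's bound $\mu\ge nK$ whenever $\mathrm{Ric}\ge(n-1)K$ with $K>0$. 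This settles the positive-Ricci case but says nothing under the merely negative lower bound $\mathrm{Ric}\ge-(n-1)$ which is the regime of interest here; a pointwise gradient estimate is therefore required.

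Next, following Li--Yau, I would normalise $f$ so that $-1\le f\le 1$ and look for a pointwise bound $|\nabla f|^2\le \Psi(f;n,\mu)$. To obtain it I would introduce an auxiliary function $G:=|\nabla f|^2+\alpha\,\phi(f)$ with parameters $\alpha>0$ and profile $\phi$ to be chosen, compute $\Delta G$ from Bochner, and apply the maximum principle at an interior maximiser of $G$; the unfavourable term $-(n-1)|\nabla f|^2$ coming from $\mathrm{Ric}\ge-(n-1)$, together with $-\mu|\nabla f|^2$, has to be absorbed into $|\Hess f|^2$ through a careful calibration of $\alpha$ and $\phi$, producing an estimate of the form $|\nabla f|^2\le \Psi(f;n,\mu)$ with $\Psi$ vanishing at $f=\pm1$ up to a regularising parameter $\eta>0$. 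Having this, I pick $p,q\in M^n$ realising $f(p)=-1$ and $f(q)=+1$, join them by a minimising geodesic $\gamma:[0,L]\to M^n$ with $L\le d$, and use the substitution $u=\arcsin f$ (sending $\eta\to 0$ at the end) to derive
\[
\pi=\int_{-1}^{1}\frac{\di u}{\sqrt{1-u^2}}\le\int_0^L\frac{|(f\circ\gamma)'(s)|}{\sqrt{1-f(\gamma(s))^2}}\di s\le d\cdot F(n,\mu),
\]
with $F$ continuous and strictly increasing in $\mu$; solving this inequality for $\mu$ yields $\mu\ge c(n,d)>0$.

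The main obstacle is the gradient estimate: unlike the positive-Ricci case, here the Bochner term $\mathrm{Ric}(\nabla f,\nabla f)$ has the same sign as $-\mu|\nabla f|^2$, so the maximum-principle argument on $G$ only closes after a subtle choice of $\phi$; this is also what typically forces the constant $c(n,d)$ to degenerate exponentially as $d\to\infty$. A conceptually different route, closer to Buser's original proof, would bypass the maximum-principle analysis entirely: Cheeger's inequality gives $\mu\ge h(M^n)^2/4$, and the Cheeger constant $h(M^n)$ can in turn be bounded below by $c(n,d)$ via Bishop--Gromov volume comparison combined with a segment-type isoperimetric argument using only the hypotheses $\mathrm{Ric}\ge-(n-1)$ and $\mathrm{diam}\le d$.
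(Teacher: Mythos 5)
Your opening integrated-Bochner computation recovering $\mu\ge n$ under $\mathrm{Ric}\ge n-1$ is exactly what the paper does, and is \emph{all} the paper does: the authors explicitly restrict to the special case $\mathrm{Ric}\ge n-1$ to keep the exposition short (invoking Myers' theorem to absorb the diameter bound) and defer the general statement to the cited works of Li--Yau and Buser. For the portion of the argument the paper actually carries out, you match it. Your two sketches of the general case $\mathrm{Ric}\ge-(n-1)$, $\mathrm{diam}\le d$ go strictly beyond what the paper writes, and both are directionally sound, though a few technical points deserve attention. On the Li--Yau route: after normalising $\sup_M|f|=1$ the eigenfunction generally attains $+1$ but not necessarily $-1$, so one must first shift $f$ (e.g.\ work with $(f+a)/(1+a)$ where $a=-\min f$) before the $\arcsin$ substitution is available; and the auxiliary function $G=|\nabla f|^2+\alpha\,\phi(f)$ must be calibrated so that both $-(n-1)|\nabla f|^2$ and $-\mu|\nabla f|^2$ are dominated by the Hessian term at a maximum of $G$, which in the classical arguments forces forms such as $|\nabla f|^2/(\beta-f)^2$ (Li--Yau) or $|\nabla f|^2+cf^2$ (Zhong--Yang) rather than an arbitrary $\phi$---you rightly flag this as the crux. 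On the Buser route: Cheeger's inequality $\mu\ge h^2/4$ needs no curvature assumption, and the hypotheses enter only in the lower bound $h\ge c(n,d)>0$, which does follow from Bishop--Gromov together with a segment-type isoperimetric argument. This second route is cleaner for the purely qualitative statement, while the gradient-estimate route yields sharper explicit constants; either one, carried through, proves more than the paper's deliberately abbreviated proof.
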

\begin{proof}
In order to keep the presentation as short as possible, let us provide a proof in the special case $\mathrm{Ric}\ge n-1$ (then we know $\mathrm{diam} \le \pi$ by a theorem of Meyers, namely the dependence of $d$ can be dropped in this case), originally due to Lichnerowicz.

Denoting by $f$ an eigenfunction of $-\Delta$ relative to the first  eigenvalue $\mu$,  Bochner's formula gives
\begin{align}
    \frac{1}{2}\Delta |\nabla f|^2&=|\mathrm{Hess}_f|^2+\langle \nabla \Delta f, \nabla f\rangle + \mathrm{Ric}(\nabla f, \nabla f) \nonumber \\
    & \ge \frac{(\Delta f)^2}{n}-\mu |\nabla f|^2 +(n-1)|\nabla f|^2, \label{eq:proofmu}
\end{align}
where we used the Cauchy-Schwarz inequality $|\mathrm{Hess}_f| \ge |\Delta f|/\sqrt{n}$.
Integrating \eqref{eq:proofmu} over $M^n$ yields the sharp estimate:
\begin{equation}
    \mu \ge n.
\end{equation}
\end{proof}
Note that such a Poincar\'e inequality (and generalizations) is valid even for non-smooth metric measure spaces with Ricci curvature bounded below and dimension bounded above in a synthetic sense, i.e., the so-called $\CD(K,N)$ spaces of Lott-Sturm-Villani (see the works by Sturm \cite{Sturm06b}, Lott-Villani \cite{LVJFA, LottVillani}, Rajala \cite{Rajala},  Cavalletti and the second named author \cite{CM1, CM2}).

\section{Spectral gap II: Poincar\'e inequality for differential forms}
In this section, under the standing assumption that $M^n$ is compact, we deal with a Poincar\'e inequality for differential forms with respect to the Hodge energy. Namely, we focus on the best constant $C>0$, denoted by $C_{P, k}=C_{P, k}(M^n)$, such that
\begin{equation}
\int_{M^n}\left| \omega-\eta_{\omega}\right|^2\di \mathrm{vol} \le C\int_{M^n} \left( |d\omega|^2 +(\delta \omega)^2\right)\di \mathrm{vol}, \quad \forall \omega \in \Gamma\left(\bigwedge^kT^*M^n\right),
\end{equation}
where $\eta_{\omega}$ is the harmonic part of $\omega$. Note that $C_{P, k}$ coincides with the inverse of the first positive eigenvalue $\mu_k=\mu_k(M^n)$ of the Hodge Laplacian $\Delta_{H, k}=\delta d+d\delta$ acting on differential $k$-forms:
\begin{equation}
C_{P, k}=\frac{1}{\mu_k}.
\end{equation}
Firstly let us provide an easy consequence of Theorem \ref{BLY} on differential forms of  top degree. It should be emphasized that there is no assumption on the volume.
\begin{proposition}\label{orient}
    If
    \begin{equation}
\mathrm{Ric} \ge -(n-1),\quad \mathrm{diam} \le d,
\end{equation}
then 
\begin{equation}
    C_{P,n}\le C(n, d).
\end{equation}
\end{proposition}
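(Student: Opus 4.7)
The plan is to reduce the $n$-form Poincar\'e inequality to the scalar Poincar\'e inequality of Theorem~\ref{BLY}, via Hodge duality between top-degree forms and functions.

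In the orientable case, every $n$-form $\omega$ admits a unique decomposition $\omega = f\,\mathrm{vol}_M$ with $f \in C^\infty(M)$. Then $d\omega = 0$ automatically (top degree), and a direct Hodge-star computation gives $|\delta\omega|^2 = |\nabla f|^2$ pointwise. Since $H^n(M;\setR)$ is spanned by $\mathrm{vol}_M$, the harmonic projection is $\eta_\omega = \bigl(\intav_M f\,\di\mathrm{vol}\bigr)\,\mathrm{vol}_M$, whence $|\omega-\eta_\omega|^2 = (f-\intav_M f)^2$. The $n$-form Poincar\'e inequality thus coincides pointwise with the scalar Poincar\'e inequality for $f$, and Theorem~\ref{BLY} yields $C_{P,n}(M) = C_P(M) \le 1/c(n,d)$.

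For the non-orientable case, I would pass to the orientation double cover $\pi\colon \tilde M\to M$, which is orientable with $\mathrm{Ric}(\tilde M)\ge -(n-1)$. A midpoint argument first establishes $\mathrm{diam}(\tilde M)\le 2d$: were $\dist(\tilde p,\tilde q) > 2d$ for some $\tilde p,\tilde q \in \tilde M$, the midpoint $\tilde r$ of a minimizing geodesic from $\tilde p$ to $\tilde q$ would satisfy $\dist(\tilde p,\tilde r),\dist(\tilde r,\tilde q) > d$, so a minimizing geodesic from $\pi(\tilde p)$ to $\pi(\tilde r)$ in $M$ would have to lift at $\tilde p$ to a path ending at the other preimage $\sigma(\tilde r)$; this gives $\dist(\tilde p,\sigma(\tilde r))\le d$ and symmetrically $\dist(\tilde q,\sigma(\tilde r))\le d$, forcing $\dist(\tilde p,\tilde q)\le 2d$ by the triangle inequality, a contradiction. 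Applying the orientable case to $\tilde M$ then gives $\mu_n(\tilde M)\ge c(n,2d)$. Next, for an eigen-$n$-form $\omega$ on $M$ with eigenvalue $\mu_n(M)$, the pullback $\tilde\omega := \pi^*\omega$ is an eigen-$n$-form on $\tilde M$ with the same eigenvalue; since the deck involution $\sigma$ is orientation-reversing on $\tilde M$ while $\sigma^*\tilde\omega = \tilde\omega$, a change-of-variables argument gives $\int_{\tilde M}\tilde\omega = 0$, so $\tilde\omega$ is $L^2$-orthogonal to $\mathrm{vol}_{\tilde M}$. Hence $\mu_n(M)\ge \mu_n(\tilde M)\ge c(n,2d)$, i.e.\ $C_{P,n}(M)\le C(n,d)$.

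The only delicate ingredient is the diameter bound for the orientation double cover; once that is in hand, the rest is pure Hodge theory. Crucially, no volume assumption is used anywhere --- both the Hodge reduction and the midpoint argument rely solely on the Ricci lower bound and the diameter control of $M$, as emphasized in the remark preceding the statement.
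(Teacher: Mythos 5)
Your proof is correct and follows the same route as the paper: reduce the orientable case to Theorem~\ref{BLY} via Hodge duality (writing $\omega=f\,\mathrm{vol}_M$ and identifying $C_{P,n}=C_P$), and handle the non-orientable case by pulling back an eigen-$n$-form to the orientation double cover. The paper's version is a brief sketch; you supply the details it leaves implicit, most usefully the midpoint argument giving $\mathrm{diam}(\tilde M)\le 2d$, which is needed to invoke the orientable case on $\tilde M$ and which the paper passes over silently.
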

\begin{proof}
    It is trivial that the conclusion holds if $M^n$ is orientable because of the Hodge star operator and Theorem \ref{BLY}. 
    
     If $M^n$ is not orientable,  take the Riemannian double cover $\pi:\tilde M^n \to M^n$. For any eigen-$n$-form $\omega$ on $M^n$, consider $\pi^*\omega$ on $\tilde M^n$ which is also an eigen-$n$-form of the same eigenvalue. Thus applying the conclusion in the orientable case to $\pi^*\omega$  completes the proof.
\end{proof}
Secondly, let us recall a result of Colbois-Courtois in \cite{CC} giving an estimate on $C_{P, k}$, under the assumption of bounded sectional curvature.
\begin{theorem}[Colbois-Courtois]\label{cc}
If
\begin{equation}\label{sectass}
|\mathrm{sec}|\le 1,\quad \mathrm{diam} \le d,\quad \mathrm{vol} \ge v,
\end{equation}
then for any $k \ge 1$
\begin{equation}
\mu_k \ge c(n, k, d, v),
\end{equation}
or, equivalently, 
\begin{equation}
C_{P, k} \le \frac{1}{c(n, k, d, v)}.
\end{equation}
\end{theorem}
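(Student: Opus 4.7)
The plan is to argue by contradiction using a Cheeger--Gromov compactness argument. Assume the statement fails, so that for some fixed $k\ge 1$ there is a sequence of closed Riemannian $n$-manifolds $(M_i^n,g_i)$ satisfying \eqref{sectass} with $\mu_k(M_i^n)\to 0$. Under the combined hypotheses $|\mathrm{sec}|\le 1$, $\mathrm{diam}\le d$ and $\mathrm{vol}\ge v$, Cheeger's injectivity radius lemma yields a uniform lower bound $\mathrm{inj}(M_i^n)\ge i_0>0$. The Cheeger--Gromov compactness theorem, applied in harmonic coordinates, then produces (up to a subsequence) a $C^{1,\alpha}$ Riemannian limit $(M_\infty^n,g_\infty)$ together with diffeomorphisms $F_i:M_\infty^n\to M_i^n$ such that $F_i^* g_i\to g_\infty$ in the $C^{1,\alpha}$ topology.

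Let $\omega_i$ be a unit-$L^2$ eigenform on $M_i^n$ realizing $\mu_k(M_i^n)$, and set $\tilde\omega_i:=F_i^*\omega_i$. Writing the Hodge Laplacian in harmonic coordinates and invoking elliptic regularity for the equation $\Delta_{H,k}^{g_i}\omega_i=\mu_k(M_i^n)\,\omega_i$, whose right-hand side tends to zero, one obtains uniform $W^{2,2}$ (in fact $C^{1,\alpha}$) bounds on $\tilde\omega_i$ with respect to $g_\infty$. By Rellich compactness, a further subsequence converges strongly in $L^2(g_\infty)$ and weakly in $W^{2,2}$ to a $k$-form $\omega_\infty$ with $\|\omega_\infty\|_{L^2(g_\infty)}=1$. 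Since the coefficients of $\Delta_{H,k}$ depend continuously in $C^{1,\alpha}$ on the metric, passing to the limit in the eigenform equation gives $\Delta_{H,k}^{g_\infty}\omega_\infty=0$, so $\omega_\infty$ is harmonic on $(M_\infty^n,g_\infty)$.

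To close the argument, one shows that $\omega_\infty$ is $L^2(g_\infty)$-orthogonal to every harmonic $k$-form on $M_\infty^n$; combined with harmonicity this forces $\omega_\infty=0$, contradicting $\|\omega_\infty\|_{L^2(g_\infty)}=1$. Since for large $i$ the $M_i^n$ are diffeomorphic to $M_\infty^n$, the Betti numbers agree; set $b_k:=\dim\mathcal{H}^k(M_i^n)=\dim\mathcal{H}^k(M_\infty^n)$. Applying the same compactness scheme to $L^2$-orthonormal bases of harmonic $k$-forms on each $M_i^n$ (where the eigenvalue is identically zero, making the uniform estimates even cleaner) extracts a limiting $L^2(g_\infty)$-orthonormal family of $b_k$ harmonic forms on $M_\infty^n$, which is therefore a basis of $\mathcal{H}^k(M_\infty^n,g_\infty)$; the orthogonality of $\omega_i$ to the approximating basis then passes to the strong $L^2$-limit.

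The main analytic obstacle in this plan is precisely the strong-$L^2$ step: it requires that no $L^2$-mass of the eigenforms escapes into a degenerating region, which is why the volume lower bound $\mathrm{vol}\ge v$ is indispensable, and it requires enough regularity of $g_\infty$ for uniform elliptic estimates on $\Delta_{H,k}$, which is granted by the two-sided sectional curvature bound through harmonic coordinates. Removing any one of the three hypotheses in \eqref{sectass} destroys the scheme, in line with the sharpness of the result.
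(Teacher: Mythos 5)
Your proposal is correct and follows essentially the same route as the paper: a contradiction argument using $C^{1,\alpha}$ precompactness (Peters/Cheeger--Gromov via the injectivity radius lower bound), passing eigenforms with $\mu_k\to 0$ to a nontrivial harmonic limit, and contradicting the dimension count for harmonic $k$-forms on the limit manifold given by Hodge theory. The paper phrases the contradiction as $\limsup_i b_k(M_i^n)<b_k(M^n)$ versus the diffeomorphism, while you show $\omega_\infty$ is a unit-norm harmonic form orthogonal to all harmonic forms and hence zero; these are the same argument, with yours spelling out the elliptic estimates and limit-passage that the paper summarizes as ``the spectral convergence is trivially satisfied.''
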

\begin{proof}
The proof is done by a contradiction argument. If it is not the case, then we can find a sequence of closed manifolds $M^n_i$ with (\ref{sectass}) and $\mu_k(M^n_i) \to 0$. Thanks to \cite{peters} by Peters, with no loss of generality we can assume that $M_i^n$ $C^{1, \alpha}$-converge to a $C^{1, \alpha}$-Riemannian manifold $M^n$ for any $0 < \alpha<1$ (in particular $M_i^n$ is diffeomorphic to $M^n$ for any sufficiently large $i$). Since the spectral convergence for $\Delta_{H, k}$ is trivially satisfied for this sequence, we have
\begin{equation}
\limsup_{i \to \infty}b_k(M_i^n)<b_k(M^n)
\end{equation}
because of the Hodge theory on $M^n$. This contradicts the fact that $M_i^n$ is diffeomorphic to $M^n$. 
\end{proof}
We refer the reader to the work  \cite{M} by Mantuano, for explicit bounds on  $c(n, k, d, v)$.
It is worth mentioning that a lower bound on the volume in the assumption of the theorem above cannot be dropped because of an example in \cite{CC}.

From the result above, it is natural to ask the following.
\begin{question}\label{secricci}
What happens in Theorem \ref{cc} if we replace $\mathrm{sec}$ by $\mathrm{Ric}$?
\end{question}
Note that the case of dimension at most $3$ can be covered by Theorem \ref{cc}. Thus a new situation appears in dimension $4$.

We are now in a position to introduce a main result of \cite{HM} giving an answer to Question \ref{secricci} in dimension $4$.
\begin{theorem}\label{eigenmain}
If
\begin{equation}\label{eq:43dv}
n=4,\quad |\mathrm{Ric}|\le 3,\quad \mathrm{diam} \le d,\quad \mathrm{vol} \ge v,
\end{equation}
then
\begin{equation}\label{eq:mu1geqcdv}
\mu_1 \ge c(d, v),
\end{equation}
or, equivalently,
\begin{equation}
C_{P, 1} \le \frac{1}{c(d, v)}.
\end{equation}
\end{theorem}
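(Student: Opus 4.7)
The plan is to argue by contradiction, in the spirit of the Colbois--Courtois proof of Theorem~\ref{cc}, but replacing the smooth $C^{1, \alpha}$-manifold limit by the orbifold limit which is the correct compactness in the non-collapsed $4$-dimensional setting under two-sided Ricci bounds. Suppose there is a sequence $(M_i^4, g_i)$ satisfying \eqref{eq:43dv} with $\mu_1(M_i^4)\to 0$. By the $4$-dimensional non-collapsed orbifold compactness theory under bounded Ricci curvature (Anderson, Bando--Kasue--Nakajima, Cheeger--Tian, Cheeger--Naber), up to a subsequence, $(M_i^4, g_i)$ converges in the Gromov--Hausdorff sense to a compact $C^{1, \alpha}$-orbifold $(X, g_\infty)$ with finitely many isolated singular points of tangent-cone type $\mathbb{R}^4/\Gamma_j$, $\Gamma_j \subset \mathrm{SO}(4)$ finite. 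The convergence is $C^{1, \alpha}$ on the regular part $X^{\mathrm{reg}}$, and a bubble-tree analysis provides, for large $i$, a diffeomorphism between $M_i^4$ and the manifold obtained from $X^{\mathrm{reg}}$ by gluing in complete Ricci-flat ALE $4$-manifolds $N_j$ at each singular point.

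The next step is spectral convergence for $\Delta_{H,1}$ along this sequence: the eigenvalues of $\Delta_{H,1}(M_i^4)$ should converge to those of the intrinsic Hodge Laplacian on the orbifold $X$. Granting this, passing to a further subsequence where $b_1(M_i^4)$ is constant, the $b_1(M_i^4)$ zero eigenvalues together with the first positive eigenvalue $\mu_1(M_i^4)\to 0$ force
\begin{equation*}
\dim\ker \Delta_{H,1}^{X} \,\ge\, b_1(M_i^4)+1.
\end{equation*}
Orbifold Hodge theory identifies $\dim\ker \Delta_{H,1}^{X}$ with $b_1(X^{\mathrm{reg}})$, while a Mayer--Vietoris computation using the surgery description of Step~1 gives $b_1(M_i^4) = b_1(X^{\mathrm{reg}}) + \sum_j b_1(N_j)$; note here that the boundaries $S^3/\Gamma_j$ are rational homology spheres, so they contribute nothing to $b_1$ with real coefficients. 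Each summand $b_1(N_j)$ vanishes by a standard Bochner argument: on a complete Ricci-flat ALE $4$-manifold, an $L^2$-harmonic $1$-form is parallel, and ALE decay at infinity then forces it to vanish identically. Hence $b_1(M_i^4) = b_1(X^{\mathrm{reg}})$, and combining, $b_1(X^{\mathrm{reg}}) \ge b_1(X^{\mathrm{reg}})+1$, a contradiction.

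The main technical obstacle is establishing spectral convergence for $\Delta_{H,1}$. In the Colbois--Courtois setting, bounded sectional curvature yields a smooth Riemannian limit diffeomorphic to each $M_i^n$, making spectral convergence essentially automatic. Here, by contrast, curvature concentrates and bubbles at the finitely many singular points, and one has to track carefully where the low-energy eigenforms on $M_i^4$ live. Concretely, one needs (i) cutoff estimates at scales adapted to the bubbling regions, showing that low-eigenvalue $1$-eigenforms on $M_i^4$ carry a vanishing fraction of their $L^2$-mass near the singular set of $X$, so their restrictions admit a nontrivial $L^2(X^{\mathrm{reg}})$ limit solving the limit eigenvalue problem; and (ii) a parallel rescaling analysis on each bubble, where the vanishing of $L^2$-harmonic $1$-forms on Ricci-flat ALE $4$-manifolds prevents eigenform mass from escaping into the bubbles and producing spurious extra harmonic $1$-forms. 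The careful reconciliation of these two pieces, and in particular the identification of the spectrum of the orbifold Hodge Laplacian with the topological $b_1(X^{\mathrm{reg}})$, is where the genuinely $4$-dimensional geometry (isolated orbifold singularities, ALE bubbles) plays the decisive role.
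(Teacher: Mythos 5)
Your overall strategy matches the paper: contradiction, non-collapsed orbifold limit in dimension $4$ under two-sided Ricci bounds, spectral convergence for $\Delta_{H,1}$, and a Hodge-theoretic identification on the limit. The genuine divergence is in how you obtain the $b_1$-stability $b_1(M_i^4)=b_1(X)$ for large $i$: the paper invokes the general Sormani--Wei theorem \cite{SormaniWei} (first Betti number stability under GH convergence with Ricci lower bound, via universal covers), whereas you obtain it from the explicit bubble-tree/surgery description of $M_i^4$ as $X^{\mathrm{reg}}$ with Ricci-flat ALE manifolds $N_j$ glued at the orbifold points, plus a Mayer--Vietoris computation and a vanishing result $b_1(N_j)=0$. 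Your route is more constructive and hands-on, yielding the same Betti number identity, but it requires the full force of the $4$-dimensional bubble-tree structure theory, whereas the paper's route only needs the (weaker, more general) orbifold regularity plus the soft Sormani--Wei input.

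Two remarks on your variant. First, a small but real gap: the ``standard Bochner argument'' you cite shows that $L^2$-harmonic $1$-forms on a complete Ricci-flat ALE $4$-manifold are parallel hence zero, but the Mayer--Vietoris argument needs the \emph{topological} first Betti number $b_1(N_j)$ to vanish, and on a non-compact manifold these need not agree a priori. The correct input is the finiteness of $\pi_1$ for complete Ricci-nonnegative manifolds with Euclidean volume growth (Li, Anderson), from which $H_1(N_j;\mathbb{Z})$ is finite and so $b_1(N_j)=0$ with real coefficients. Second, you correctly identify spectral convergence for $\Delta_{H,1}$ as the main technical input; the paper cites the first author's spectral convergence theorem under bounded Ricci curvature \cite{Hondas} for exactly this. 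That result also handles the identification of the limiting spectrum with the Hodge Laplacian on the orbifold/RCD limit (using Satake \cite{Satake} and Gigli \cite{Gigli}), which is where the paper's invocation of both orbifold and RCD Hodge theory enters; your sketch would likewise need to tie the spectrum of the orbifold Hodge Laplacian to $b_1(X^{\mathrm{reg}})$, which is not automatic.
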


\begin{proof}[Sketch of the proof]
 As in Theorem \ref{cc}, the proof is performed via a contradiction argument, exploiting the pre-compactness of the family of Riemannian manifolds satisfying \eqref{eq:43dv} and the regularity theory of the arising limits.  

 Assume by contradiction that \eqref{eq:mu1geqcdv} does not hold.
 Then we can find a sequence of closed Riemannian manifolds $M_i^4$ with $|\mathrm{Ric}| \le 3$, $\mu_1(M_i^4) \to 0$ and
 \begin{equation}
 M^4_i \stackrel{\mathrm{GH}}{\to} X^4
 \end{equation}
 for some non-collapsed Ricci limit space $X^4$. From \cite{SormaniWei} by Sormani-Wei, it follows that $b_1(M_i^4)=b_1(X^4)$ for any sufficiently large $i$. Thanks to works by Anderson, and Cheeger-Naber \cite{Anderson, Anderson2, CheegerNaber}, we know that $X^4$ is an orbifold. Then the spectral convergence established in \cite{Hondas} by the first named author together with $\mu_1(M^4_i) \to 0$ allows us to conclude
 \begin{equation}
\limsup_{i \to \infty}b_1(M_i)<b_1(X^4) 
\end{equation}
because of both Hodge theories \cite{Satake, Gigli} by Satake and Gigli for orbifolds and RCD spaces, respectively. Thus we have a contradiction.
\end{proof}
%Arguing as in the  proof of Proposition \ref{orient}, we obtain the following.
\begin{corollary}\label{cor:spectralgap4}
    Under the same assumptions of Theorem \ref{eigenmain}, it holds
    \begin{equation}
        C_{P, k} \le C(d, v), \quad \forall k=0, 1, 2, 3, 4.
    \end{equation}
\end{corollary}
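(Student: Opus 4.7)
The plan is to treat the five values $k\in\{0,1,2,3,4\}$ case by case, reducing each to one of the results already established. Since the assumption $|\mathrm{Ric}|\le 3$ together with $n=4$ gives $\mathrm{Ric}\ge -(n-1)$, the case $k=0$ is immediate from Theorem \ref{BLY} and the case $k=4$ from Proposition \ref{orient}, while $k=1$ is exactly Theorem \ref{eigenmain}.

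For $k=3$ I would use Hodge star duality together with a double cover argument when necessary. If $M^4$ is orientable, then $\mu_3(M^4)=\mu_1(M^4)$ directly and Theorem \ref{eigenmain} closes the case. If $M^4$ is non-orientable, I pass to the orientation double cover $\widetilde M^4$: it is orientable, inherits $|\mathrm{Ric}|\le 3$ pointwise, satisfies $\mathrm{vol}(\widetilde M^4)\ge 2v$, and its diameter is bounded by $C(d)$ via Gromov's short-generator theorem (which provides an orientation-reversing loop of length $\le 2d$ in $M^4$, lifting to a path between the two sheets of $\widetilde M^4$; this is the same bound tacitly used in the proof of Proposition \ref{orient}). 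Since pulling back forms preserves eigenvalues, $\mu_3(M^4)\ge \mu_3(\widetilde M^4)=\mu_1(\widetilde M^4)\ge c(d,v)$ by Theorem \ref{eigenmain} applied on $\widetilde M^4$.

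The main case is $k=2$, where $n-k=k$ renders Hodge duality useless. Here I would adapt the contradiction/compactness scheme employed in the proof of Theorem \ref{eigenmain}. Assume for contradiction that there is a sequence $M_i^4$ satisfying \eqref{eq:43dv} with $\mu_2(M_i^4)\to 0$. A subsequence Gromov-Hausdorff converges to a non-collapsed Ricci limit $X^4$; by Anderson and Cheeger-Naber, $X^4$ is an orbifold with finitely many isolated singularities, and for $i$ large $M_i^4$ is diffeomorphic to a fixed topological resolution $\widehat X$ of $X^4$. Honda's spectral convergence for $\Delta_{H,2}$ combined with the orbifold Hodge theory of Satake and the RCD Hodge theory of Gigli forces $\dim\ker\Delta_{H,2}(X^4)=b_2^{\mathrm{orb}}(X^4)$ to be strictly larger than $b_2(M_i^4)$ for large $i$. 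The remaining, and principal, obstacle is to prove the opposite inequality $b_2(M_i^4)\ge b_2^{\mathrm{orb}}(X^4)$, i.e., lower semi-continuity of $b_2$ under the convergence. I would establish this either topologically (exceptional divisors in the resolution $\widehat X$ only add to $H^2$, so $b_2(\widehat X)\ge b_2^{\mathrm{orb}}(X^4)$) or by combining continuity of the Euler characteristic via smooth and orbifold Chern-Gauss-Bonnet with the $b_1$-stability of Sormani-Wei already exploited in Theorem \ref{eigenmain}.
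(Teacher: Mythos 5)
Your treatment of $k=0,1,3,4$ matches the paper's: $k=0$ from Theorem \ref{BLY}, $k=1$ from Theorem \ref{eigenmain}, $k=4$ from Proposition \ref{orient}, and $k=3$ by Hodge star (with a pullback to the orientation double cover in the non-orientable case).

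For $k=2$, however, you miss the point of the paper's argument, and your proposed replacement has a genuine hole. You dismiss Hodge duality as ``useless'' at mid-degree because $*$ maps $\Omega^2\to\Omega^2$, but this overlooks what the Hodge \emph{decomposition} still gives you. The nonzero spectrum of $\Delta_{H,2}$ is the union of the nonzero spectrum on exact $2$-forms and on co-exact $2$-forms; the exact part is isospectral to the co-exact part of $\Delta_{H,1}$ via $d$, and on an orientable $4$-manifold the star $*:\Omega^2\to\Omega^2$ swaps exact and co-exact $2$-forms while commuting with $\Delta_H$, so the co-exact $2$-form spectrum coincides with the exact $2$-form spectrum as well. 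Hence $\mu_2$ equals the first eigenvalue on co-exact $1$-forms, which is $\ge\mu_1\ge c(d,v)$. This is exactly the elementary reduction the paper invokes, citing the discussion around (2.1) of Takahashi; there is no need to rerun the compactness machinery.

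Your alternative---redoing the contradiction/pre-compactness argument for $\mu_2$---would be much heavier even if it worked, and as written it does not close: the inequality $b_2(M_i^4)\ge b_2^{\mathrm{orb}}(X^4)$ is precisely the missing ingredient, and neither of your two suggested fixes is immediate. The Sormani--Wei result is specifically about the fundamental group and $b_1$, with no analogue for $b_2$. The ``exceptional divisors only add to $H^2$'' heuristic presumes that $M_i^4$ is diffeomorphic to a complex resolution of the orbifold, whereas the Cheeger--Naber picture produces $M_i^4$ by gluing Ricci-flat ALE pieces at orbifold points; one must check what that does to $H^2$. The Euler-characteristic route (stability of $\chi$ via Chern--Gauss--Bonnet plus $b_1$-stability) is the most promising repair, but proving that the curvature integrals converge across the formation of orbifold singularities requires real work and is not a consequence of the ingredients you list. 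So the proposal is sound for the peripheral degrees but misses the clean Hodge-decomposition shortcut at $k=2$, and the substitute it offers is left with an unproved and nontrivial step.
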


\begin{proof}
    Arguing as in the  proof of Proposition \ref{orient}, we obtain the result in the case when $k=3$ (recall that we already obtained the results in the case when $k=0, 1$ and $4$). Thus it is enough to check the assertion for $k=2$ when 
 $M^4$ is oriented (by the same reason as above).  To this aim, the Hodge decomposition yields the desired spectral gap of the Hodge Laplacian for $2$-forms from that of $1$-forms (see, for instance, the discussion around \cite[(2.1)]{Takahashi}. Thus we conclude. %By the same reason as in the case when $K=3$ (and $k=4$), it is enough to prove it under assuming that $M^4$ is oriented.

    %Take an eigen-$2$-form $\omega$ on $M^4$ with $\Delta_{H, 2}\omega=\lambda \omega$ some $\lambda >0$, and consider the Hodge decomposition:
    %\begin{equation}
        %\omega=\omega_h+d\omega_1+\delta \omega_3
    %\end{equation}
    %where $\omega_h$ is the harmonic part of $\omega$, and $\omega_i$ is the $i$-form on $M^4$. Then
    %\begin{equation}
        %\lambda \omega_h+d(\lambda \omega_1)+\delta(\lambda \omega_3)=\lambda \omega=\Delta_{H,2}\omega=0+d\Delta_{H,1}\omega_1+\delta \Delta_{H,3}\omega_3,
    %\end{equation}
    %thus 
    %\begin{equation}
        %\omega_h=0,\quad d(\lambda \omega_1)=d\Delta_{H,1}\omega_1,\quad \delta(\lambda \omega_3)=\delta \Delta_{H,3}\omega_3.
    %\end{equation}

\end{proof}

Let us recall the following conjecture proposed in \cite{HM}.
\begin{conjecture}\label{specgap}
If
\begin{equation}\label{riccibound}
\mathrm{Ric}\ge -(n-1),\quad \mathrm{diam} \le d,\quad \mathrm{vol} \ge v,
\end{equation}
then
\begin{equation}
C_{P, 1} \le C(n, d, v).
\end{equation}
\end{conjecture}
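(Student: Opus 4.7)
The plan is a contradiction argument parallelling the proof of Theorem \ref{eigenmain}. Assume the conclusion fails: then there exists a sequence of closed Riemannian manifolds $M_i^n$ satisfying \eqref{riccibound} with $\mu_1(M_i^n)\to 0$. By Gromov's pre-compactness theorem, up to extracting a subsequence one has $M_i^n \stackrel{\mathrm{GH}}{\to} X$. The lower volume bound, together with Colding's volume convergence, guarantees that $X$ is a non-collapsed limit space, and by the stability of the $\RCD$ condition it is an $\RCD(-(n-1),n)$ space of essential dimension exactly $n$.

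To conclude, one would combine three ingredients, each playing the role its smooth/orbifold counterpart played in Theorem \ref{eigenmain}:
\begin{enumerate}[(i)]
\item \emph{Stability of the first Betti number}: $b_1(M_i^n)=b_1(X)$ for $i$ large enough. One half (namely $\limsup_i b_1(M_i^n)\le b_1(X)$ in an appropriate sense) should follow by extending the Sormani--Wei argument from \cite{SormaniWei} to the non-collapsed $\RCD$ framework, using $\pi_1$-stability results along non-collapsed sequences.
\item \emph{Spectral convergence of the Hodge Laplacian on $1$-forms}: the first positive eigenvalues $\mu_1(M_i^n)$ should pass to the limit, in the sense that a subsequential limit belongs to the spectrum of a suitably defined Hodge Laplacian $\Delta_{H,1}$ on $X$. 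In particular, the assumption $\mu_1(M_i^n)\to 0$ produces a non-trivial element of $\ker \Delta_{H,1}$ on $X$ disjoint from the harmonic forms arising as limits of harmonic $1$-forms on $M_i^n$.
\item \emph{Hodge theorem on $X$}: the dimension of $\ker \Delta_{H,1}$ equals $b_1(X)$, via the nonsmooth Hodge theory of \cite{Gigli}.
\end{enumerate}
Combining (ii) and (iii) would force $b_1(X)>\limsup_i b_1(M_i^n)$, contradicting (i).

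The main obstacle is step (ii). In the $4$-dimensional setting of Theorem \ref{eigenmain}, the two-sided Ricci bound forces $X$ to be a Riemannian orbifold, so classical tools (heat kernel estimates, compactness of $H^1$-forms, elliptic regularity) transfer to the limit with minor adjustments. Under the hypotheses of Conjecture \ref{specgap}, however, only a lower Ricci bound is available, the limit $X$ can carry a singular set of codimension at most $2$ with very weak regularity, and the Mosco convergence of the Hodge energies $\omega\mapsto \int(|d\omega|^2+(\delta\omega)^2)\,\d\mathrm{vol}$ along the sequence is presently not known. Building up the required functional-analytic framework for $1$-forms on non-collapsed $\RCD$ spaces—in particular a spectral stability statement for $\Delta_{H,1}$ that degrades gracefully as the Riemannian structure degenerates to a singular limit—constitutes the essential new input that this conjecture calls for.
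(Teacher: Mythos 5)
This is stated as an open conjecture in the paper; no proof is given, so your honest framing as a roadmap with an identified gap is the right posture. Your strategy — contradiction, Gromov compactness with non-collapsing, then a triad of Betti-number stability, spectral convergence, and Hodge theory on the limit — is precisely the one the paper envisions (mirroring the proof of Theorem \ref{eigenmain}). In the ``Open problems'' section, the paper reduces Conjecture \ref{specgap} to a two-part conjecture: (1) $b_1(M_i^n)=b_{H,1}(X^n)$ for $i$ large, where $b_{H,1}$ is the dimension of the space of harmonic $1$-forms in Gigli's framework, and (2) $b_{H,1}(X^n)=b_1(X^n)$; this is then combined with Pan--Wei's $\pi_1$-stability for non-collapsed Ricci limits in place of Sormani--Wei.

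Where your diagnosis differs from the paper's is in locating the bottleneck. You flag step (ii), spectral convergence of $\Delta_{H,1}$, as the essential missing input, while presenting step (iii) — that $\dim\ker\Delta_{H,1}=b_1(X)$ ``via the nonsmooth Hodge theory of \cite{Gigli}'' — as settled. That is not quite right: Gigli's framework furnishes the definitions ($H^{1,2}_H$, harmonic forms, $b_{H,1}$), but it does \emph{not} identify $b_{H,1}(X)$ with the topological first Betti number on a singular Ricci limit; precisely the inequality $b_{H,1}(X^n)\le b_1(X^n)$ is the open part. In fact, the paper proves a proposition showing that harmonic $1$-forms $\omega_i$ on $M_i^n$ converge $L^2$-strongly to forms in $H^{1,2}_H(T^*X^n)$ (using the intrinsic Reifenberg method and capacity estimates near $\mathcal{S}^{n-2}_\epsilon$), yielding $b_1(M_i^n)\le b_{H,1}(X^n)$. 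So a substantial part of the spectral-convergence machinery you flag as absent is already in place, and the genuine gap lies in your step (iii): establishing the Hodge-theoretic comparison $b_{H,1}\le b_1$ on the limit space.
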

\noindent
Two sub-conjectures already of interest would be:

\begin{itemize}
\item Establish Conjecture \ref{specgap} under the stronger assumption that $\mathrm{Sec}\geq -1$ (see also Conjecture \ref{section} for a stronger statement);
\item  Establish Conjecture \ref{specgap} under the stronger assumption that $\mathrm{|Ric|}\leq n-1$;
\end{itemize}

We are able to give a partial answer to the conjecture. 
\begin{proposition}\label{dim2poincare}
Conjecture \ref{specgap} holds in dimension $2$. More strongly, the dependence on $v>0$ can be dropped;   namely, if 
\begin{equation}
n=2,\quad \mathrm{Ric} \ge -1,\quad \mathrm{diam} \le d,
\end{equation}then
\begin{equation}\label{2dimpoincare}
    C_{P, 1}\le C(d).
\end{equation}

\end{proposition}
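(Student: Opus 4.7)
The plan is to reduce the $1$-form spectral gap to the scalar one of Theorem~\ref{BLY}, exploiting a Hodge-theoretic identification available only in dimension~$2$. First I would treat the case in which $M^2$ is orientable. Via the Hodge decomposition $\Omega^1(M^2)=d\Omega^0\oplus\delta\Omega^2\oplus\mathcal{H}^1$, the positive spectrum of $\Delta_{H,1}$ splits as the union of its spectra on the exact and coexact summands. On the exact part, $\Delta_{H,1}(df)=d\Delta_{H,0}f$ identifies the positive eigenvalues with those of the scalar Laplacian, namely with $\{\mu_j(M^2)\}_{j\ge 1}$. On the coexact part, the intertwining $*\Delta_{H,k}=\Delta_{H,2-k}*$, together with the fact that, precisely when $n=2$, the Hodge star $*\colon\Omega^0\to\Omega^2$ is a pointwise isometry (hence an isomorphism of eigenspaces), transfers the scalar spectrum to $\Omega^2$, from where the map $\delta$ carries it further onto coexact $1$-forms. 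Putting the two halves together yields the key low-dimensional identity $\mu_1(M^2)=\mu(M^2)$, and then Theorem~\ref{BLY} gives the desired bound \eqref{2dimpoincare}.

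For non-orientable $M^2$ I would proceed as in the proof of Proposition~\ref{orient}: lifting to the orientable Riemannian double cover $\pi\colon\tilde M^2\to M^2$, a non-trivial eigen-$1$-form $\omega$ on $M^2$ of eigenvalue $\mu_1(M^2)>0$ pulls back to a non-zero eigen-$1$-form $\pi^*\omega$ on $\tilde M^2$ of the same eigenvalue; since eigenforms corresponding to distinct eigenvalues are $L^2$-orthogonal, $\pi^*\omega$ is automatically orthogonal to $\mathcal{H}^1(\tilde M^2)$, giving $\mu_1(\tilde M^2)\le\mu_1(M^2)$. To apply the orientable case to $\tilde M^2$ I then need to control $\mathrm{diam}(\tilde M^2)$ in terms of $d$, which I expect to settle by a standard covering argument: if $\sigma$ denotes the deck involution then lifting minimising geodesics shows $\tilde M^2=\bar B(\tilde p,d)\cup\bar B(\sigma(\tilde p),d)$ for every $\tilde p\in\tilde M^2$, so connectedness of $\tilde M^2$ forces the two closed balls to meet, yielding $d(\tilde p,\sigma(\tilde p))\le 2d$ and hence $\mathrm{diam}(\tilde M^2)\le 4d$.

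I do not foresee any genuine obstacle: the heart of the argument is the coincidence $\mu_1=\mu$ on oriented surfaces, which is a formal consequence of Hodge duality between $0$- and $n$-forms in the very special case $n=2$. This is precisely what makes the argument irreducibly two-dimensional, and I suspect it also explains why the general Conjecture~\ref{specgap} (and even its sub-conjectures) requires substantially more geometric information, such as the non-collapsing assumption already exploited in dimension~$4$ in Theorem~\ref{eigenmain}.
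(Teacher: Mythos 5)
Your proposal is correct and follows essentially the same route as the paper: in the orientable case one identifies $C_{P,1}=C_P$ via two-dimensional Hodge theory (the paper simply cites \cite[Proposition 2.4]{Takahashi}, whereas you re-derive the identity $\mu_1=\mu$ directly from the Hodge decomposition and the Hodge-star intertwining between $\Omega^0$ and $\Omega^2$), and the non-orientable case is reduced to the orientable one by lifting to the Riemannian double cover exactly as in the proof of Proposition \ref{orient}. You additionally make explicit the diameter bound $\mathrm{diam}(\tilde M^2)\le 4d$ on the double cover, a step the paper's one-line reduction leaves implicit.
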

\begin{proof}
If $M^2$ is orientable,  classical Hodge theory in dimension 2 (see for instance \cite[Proposition 2.4]{Takahashi} by Takahashi) implies that  $C_{P,1}=C_P$. Thus  (\ref{2dimpoincare}) is a direct consequence of Theorem \ref{BLY}.

If $M^2$ is not orientable, one can take the orientable double cover and argue as in the proof of Proposition \ref{orient}.    
\end{proof}

\section{Gap metric-rigidity for Einstein manifolds}
In this section, we deal with gap theorems for Ricci-flat and positive Einstein manifolds. 

\subsection{Anderson's gap theorem for Ricci-flat manifolds}
If $\mathrm{Ric}\ge 0$, then the asymptotic volume ratio (AVR) of $M^n$ is defined by
\begin{equation}
\mathrm{AVR}=\mathrm{AVR}(M^n):=\lim_{r\to \infty}\frac{\mathrm{vol}(B_r(x))}{\omega_nr^n} \in [0, 1],
\end{equation}
where the existence of the limit comes from the Bishop-Gromov inequality (moreover it does not depend on the choice of $x \in M^n$) and the upper bound $1$ is a direct consequence of the Bishop inequality.

Let us start this section by recalling the following well known fact.
\begin{proposition}\label{rigid}
Assume $\mathrm{Ric} \ge 0$. Then $M^n$ is isometric to $\mathbb{R}^n$ if and only if $\mathrm{AVR}=1$. 
\end{proposition}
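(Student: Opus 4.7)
The ``only if'' direction is immediate: on $\setR^n$ every Euclidean ball satisfies $\mathrm{vol}(B_r(x))=\omega_n r^n$, so $\mathrm{AVR}=1$. The content of the statement is the converse implication, and my plan is to deduce it directly from the Bishop--Gromov volume comparison theorem together with its rigidity case.

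First I would recall that under $\mathrm{Ric}\ge 0$, Bishop--Gromov tells us that, for any fixed $x \in M^n$, the function
\begin{equation}
f(r):=\frac{\mathrm{vol}(B_r(x))}{\omega_n r^n}
\end{equation}
is monotone non-increasing in $r \in (0,\infty)$, and that $\lim_{r\to 0^+} f(r)=1$ (since small balls are approximately Euclidean). Consequently $f(r)\le 1$ for every $r>0$ and $f$ has a well-defined limit at infinity, namely $\mathrm{AVR}$, which moreover does not depend on $x$. The assumption $\mathrm{AVR}=1$ forces $f\equiv 1$ on $(0,\infty)$: indeed, for any $r>0$ one has $1=\lim_{s\to 0^+}f(s)\ge f(r)\ge \lim_{s\to\infty}f(s)=1$.

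The next, and key, step is to invoke the rigidity case of Bishop--Gromov: if equality $\mathrm{vol}(B_r(x))=\omega_n r^n$ holds for some $r>0$ (hence for all $r>0$ in our situation), then $B_r(x)$ is isometric to the Euclidean ball of the same radius in $\setR^n$. Since this holds for every $r>0$ and every $x\in M^n$, we conclude that $M^n$ is a complete Riemannian manifold whose every metric ball is Euclidean; exhausting $M^n$ by such balls yields an isometry with $\setR^n$.

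The only real obstacle is the appeal to the rigidity statement in Bishop--Gromov, which is classical but slightly delicate to prove (it rests on analyzing the equality case in the Jacobian comparison along radial geodesics, ruling out conjugate points and cut locus, and then patching the exponential map into a global isometry); I would simply cite it rather than reprove it. Everything else is a short monotonicity argument.
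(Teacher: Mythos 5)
Your argument is correct and follows essentially the same route as the paper: you use Bishop--Gromov monotonicity to squeeze $\mathrm{vol}(B_r(x))/(\omega_n r^n)$ to be identically $1$, then invoke the rigidity case to conclude each ball is Euclidean and exhaust. The paper's proof is simply a more terse version of exactly this.
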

\begin{proof}
We only discuss the ``if'' implication, as the converse is trivial.
 If $\mathrm{AVR}=1$, then the Bishop and the Bihop-Gromov inequalities show
\begin{equation}
\mathrm{vol} (B_r(x))=\omega_nr^n,\quad \forall r>0,\quad \forall x \in M^n.
\end{equation}
This easily implies that any ball in $M^n$ is isometric to a ball of the same radius in $\mathbb{R}^n$. The conclusion follows. 
\end{proof}

From this proposition, it is natural to ask what happens if $\mathrm{AVR}$ is close to $1$. The following gives an answer to the question.

\begin{theorem}[Colding, Cheeger-Colding]\label{ChC}
The following holds:
\begin{enumerate}
\item Assume $\mathrm{Ric} \ge 0$. Then $M^n$ is isometric to $\mathbb{R}^n$ if and only if a tangent cone at infinity of $M^n$ is isometric to $\mathbb{R}^n$.
\item For any $n \ge 2$, there exists $\epsilon(n)>0$ such that if $\mathrm{Ric}\ge 0$ and $\mathrm{AVR} \ge 1-\epsilon(n)$, then $M^n$ is diffeomorphic to $\mathbb{R}^n$.
\end{enumerate}
\end{theorem}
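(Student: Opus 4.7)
For part (1), the ``only if'' implication is immediate. For the converse, fix $x\in M^n$ and a sequence $r_i\to\infty$ such that $(M^n, r_i^{-2}g, x)$ converges in the pointed Gromov--Hausdorff sense to $\mathbb{R}^n$. Every rescaled space has $\mathrm{Ric}\ge 0$ and the limit is non-collapsed Euclidean, so Colding's volume continuity theorem applies to unit balls; rescaling back this gives
\[
\frac{\mathrm{vol}(B_{r_i}(x))}{\omega_n r_i^n}\longrightarrow 1.
\]
Together with Bishop--Gromov monotonicity this forces $\mathrm{AVR}(M^n)=1$, and Proposition \ref{rigid} concludes.

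For part (2), I would argue by contradiction following the standard compactness-rigidity template. Assume the claim fails: there exist $M_i^n$ with $\mathrm{Ric}_i\ge 0$, $\mathrm{AVR}(M_i^n)\to 1$, but none of the $M_i^n$ is diffeomorphic to $\mathbb{R}^n$. Fix base points $x_i\in M_i^n$. The AVR hypothesis combined with Bishop--Gromov gives uniform two-sided volume bounds on balls of every fixed radius, so by Gromov precompactness a subsequence converges pointedly to some non-collapsed limit $(X, x_\infty)$. Colding volume continuity then yields $\mathrm{vol}(B_r(x_\infty))=\omega_n r^n$ for every $r>0$, and Cheeger--Colding volume rigidity (essentially the argument of Proposition \ref{rigid} transplanted to the limit) forces $X$ to be isometric to $\mathbb{R}^n$.

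It remains to upgrade pointed GH convergence to $\mathbb{R}^n$ into a global diffeomorphism for large $i$. Here I would invoke the Cheeger--Colding $\eps$-regularity theorem: there exists $\eps_0(n)>0$ such that any $n$-manifold ball of radius $2$ with $\mathrm{Ric}\ge -\eps_0$ that is $\eps_0$-GH close to the Euclidean ball of radius $2$ is bi-Hölder diffeomorphic to the Euclidean unit ball through harmonic coordinates. Because $\mathrm{AVR}(M_i^n)\to 1$ together with the monotonicity of $r\mapsto r^{-n}\mathrm{vol}(B_r(\cdot))$ forces every rescaled ball in $M_i^n$, at every scale and every base point, to be arbitrarily close to a Euclidean ball (for $i$ large), one can cover $M_i^n$ by concentric annular regions, each diffeomorphic to a Euclidean annulus, and glue them via a Reifenberg-type recognition to produce a global diffeomorphism $M_i^n\cong\mathbb{R}^n$, contradicting the assumption.

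The main obstacle is this last gluing/recognition step. The compactness set-up and the identification of the limit are essentially routine once part (1) is in hand; the substantive analysis lies in the $\eps$-regularity and the topological assembly. The $\eps$-regularity result rests on the almost-splitting theorem and delicate harmonic coordinate estimates, but it only furnishes local bi-Hölder charts; promoting these to a single global diffeomorphism requires the careful Reifenberg-type argument of Cheeger--Colding, originally yielding a homeomorphism and, with additional smoothing, a diffeomorphism. Controlling chart compatibilities across scales, with the uniform AVR monotonicity as the only quantitative input, is where I expect the technical work to concentrate.
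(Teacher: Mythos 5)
Your argument is essentially the paper's: for (1), Colding's volume convergence applied to the rescalings, together with Bishop--Gromov monotonicity, forces $\mathrm{AVR}(M^n)=1$, and Proposition~\ref{rigid} concludes; for (2), the hypothesis $\mathrm{AVR}\ge 1-\epsilon(n)$ plus Bishop--Gromov monotonicity gives that every ball, at every scale and every base point, is $\delta_n R$-Gromov--Hausdorff close to a Euclidean ball of the same radius, and the intrinsic Reifenberg method of Cheeger--Colding then produces the global diffeomorphism. The contradiction/compactness preamble you set up for (2) --- extracting a pGH sublimit $(X,x_\infty)$ and identifying it with $\mathbb{R}^n$ --- is logically inert: you abandon it immediately and re-derive the scale-invariant Reifenberg condition directly from the AVR monotonicity, which is the step that actually carries the proof and is what the paper applies without any contradiction wrapper.
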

\begin{proof}[Sketch of the proof]
(1) is a direct consequence of Proposition \ref{rigid} and the volume convergence established in \cite{CheegerColding1, Colding} by Cheeger-Colding and Colding.

For (2),
under the assumptions, we know that any ball of radius $R>0$ in $M^n$ is $(\delta_nR)$-Gromov-Hausdorff close to a ball of radius $R>0$ in $\mathbb{R}^n$. The conclusion follows from the intrinsic Reifenberg method established in \cite{CheegerColding1}.
\end{proof}
From now on let us focus on Ricci flat manifolds. The following is classical.
\begin{proposition}\label{gap3}
If 
\begin{equation}\label{eq:3AVR}
n \le 3,\quad \mathrm{Ric}\equiv 0,\quad \mathrm{AVR}>0,
\end{equation}
then $M^n$ is isometric to $\mathbb{R}^n$.
\end{proposition}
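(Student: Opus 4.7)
The plan exploits the fact that in dimension $n\le 3$ the Weyl tensor vanishes identically, so that the full Riemann curvature tensor is algebraically determined by the Ricci tensor. In particular the hypothesis $\mathrm{Ric}\equiv 0$ upgrades to sectional curvature $\equiv 0$, and $M^n$ is a complete flat Riemannian manifold. By the Killing--Hopf classification, $M^n$ is then isometric to a quotient $\mathbb{R}^n/\Gamma$, where $\Gamma$ is a discrete subgroup of $\mathrm{Isom}(\mathbb{R}^n)$ acting freely and properly discontinuously; if $\Gamma=\{e\}$ we are done.

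It thus suffices to show that $\Gamma\ne\{e\}$ forces $\mathrm{AVR}(M^n)=0$, contradicting the hypothesis. Fix any $\gamma\in\Gamma\setminus\{e\}$ and write $\gamma(\tilde x)=A\tilde x+b$ with $A\in O(n)$, $b\in\mathbb{R}^n$. Decomposing $b=b_0+b_\perp$ according to the $A$-invariant orthogonal splitting $\mathbb{R}^n=\ker(I-A)\oplus\mathrm{range}(I-A)$, freeness of the action forces $b_0\ne 0$ (otherwise $\gamma$ would have a fixed point), and summing the geometric series for $(I-A)^{-1}$ on the second factor yields
\begin{equation*}
\gamma^k\tilde x-\tilde x \;=\; k\,b_0 \;+\; O(1)\qquad\text{as } |k|\to\infty,
\end{equation*}
so $\ell:=|b_0|>0$ is the translation length of $\gamma$.

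I now argue via a multiplicity count for the Riemannian covering $p:\mathbb{R}^n\to M^n$. Fix $x\in M^n$ with lift $\tilde x\in\mathbb{R}^n$. For any $y\in B_{r/2}(x)$ choose a lift $\tilde y\in B_{r/2}(\tilde x)$; since each $\gamma^k$ is an isometry, $|\gamma^k\tilde y-\tilde x|\le r/2+|k|\ell+O(1)$, so that $\gamma^k\tilde y\in B_r(\tilde x)$ for at least $\sim r/\ell$ values of $k$, each giving a distinct element of $p^{-1}(y)\cap B_r(\tilde x)$. Integrating this multiplicity bound against the volume form on $M^n$ (using the coarea principle for the local isometry $p$) gives
\begin{equation*}
\omega_n r^n \;=\; \mathrm{vol}\bigl(B_r(\tilde x)\bigr) \;\ge\; \frac{r}{\ell}\,\mathrm{vol}\bigl(B_{r/2}(x)\bigr),
\end{equation*}
which forces $\mathrm{vol}(B_{r/2}(x))/(r/2)^n \le 2^n\,\ell\,\omega_n/r\to 0$ as $r\to\infty$, and hence $\mathrm{AVR}(M^n)=0$, a contradiction.

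The main conceptual step is the first one: the low-dimensional upgrade from $\mathrm{Ric}\equiv 0$ to flatness via the vanishing of the Weyl tensor is precisely what distinguishes this statement from higher dimensions, where nontrivial Ricci-flat examples with $\mathrm{AVR}>0$ (e.g.\ Eguchi--Hanson, Calabi, or more generally Kronheimer's ALE gravitational instantons) abound. Once flatness is established, the remainder is an essentially combinatorial volume-counting argument for flat covers, with no tool heavier than the Killing--Hopf-type structure of complete flat manifolds.
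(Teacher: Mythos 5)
Your proposal is correct and follows the same overall architecture as the paper's proof: in dimension at most $3$ Ricci-flatness forces flatness, Killing--Hopf gives $M^n\cong\mathbb{R}^n/\Gamma$ with $\Gamma$ discrete and free, and then the AVR hypothesis is used to show $\Gamma$ must be trivial. Where the two differ is in the last step. The paper argues by a short case analysis on the type of isometry (rotations and pure reflections have fixed points, so are excluded by freeness; hence $\Gamma$ ``must contain a non-trivial translation''), and then asserts, without computation, that this implies $\mathrm{AVR}=0$. You instead treat an arbitrary $\gamma(\tilde x)=A\tilde x+b\in\Gamma\setminus\{e\}$, split $b$ along the $A$-invariant orthogonal decomposition $\ker(I-A)\oplus\mathrm{range}(I-A)$, observe that freeness forces the $\ker$-component $b_0\ne 0$, deduce the linear growth $\gamma^k\tilde x-\tilde x=kb_0+O(1)$, and then run a quantitative preimage-counting argument through the covering $p:\mathbb{R}^n\to M^n$ to get $\omega_n r^n\gtrsim (r/\ell)\,\mathrm{vol}(B_{r/2}(x))$ and hence $\mathrm{AVR}=0$. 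This buys you two things over the paper's version: it is uniform in the type of isometry (in particular it handles screw motions whose rotational part has irrational angle, where no power of $\gamma$ is a pure translation, so the paper's phrasing is slightly imprecise), and it spells out the volume-growth computation that the paper leaves as an unproved assertion. Conversely, the paper's version is shorter and leans on the reader's familiarity with the geometry of flat quotients.
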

\begin{proof}
If $n\leq 3$, then $\mathrm{Ric}\equiv 0$ implies that  $\mathrm{Sec}\equiv 0$, which in turn implies that $M^n$ is locally isometric to $\mathbb{R}^n$. This means that there is a discrete free subgroup $\Gamma$ of the isometry group of $\mathbb{R}^n$ such that $M$ is isometric to the quotient $\mathbb{R}^n/\Gamma$. Since $M^n$ is smooth, $\Gamma$ cannot contain rotations, which have fixed point and would create singularities in the quotient. Moreover, $\Gamma \setminus\{\mathrm{id}\}$ cannot consist only of a reflection, as in this case $\mathbb{R}^n/\Gamma$ is a half-space, contradicting that $M^n$ is without boundary. Thus, if $\Gamma \setminus\{\mathrm{id}\}$ is non-empty, then it must contain at least a non-trivial translation. This implies that $\mathrm{AVR}(\mathbb{R}^n/\Gamma)=0$,  contradicting \eqref{eq:3AVR}. We thus conclude that $\Gamma$ is trivial, yielding that $M^n$ is isometric to $\mathbb{R}^n$.
\end{proof}
We are now in position to introduce the main focus of this section, called Anderson's gap theorem. The following should be compared with Theorem \ref{ChC}.
\begin{theorem}[Anderson]\label{Andersongap}
For any $n \ge 2$, there exists $\epsilon(n)>0$ such that if $\mathrm{Ric}\equiv 0$ and $\mathrm{AVR} \ge 1-\epsilon(n)$, then $\mathrm{AVR}=1$, namely $M^n$ is isometric to $\mathbb{R}^n$.
\end{theorem}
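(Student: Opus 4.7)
The plan is to argue by contradiction via a rescaling argument familiar from Einstein manifold regularity theory. Suppose the theorem fails. Then there exists a sequence $M_i^n$ of Ricci-flat manifolds with $\mathrm{AVR}(M_i^n) \to 1$ but $M_i^n$ not isometric to $\mathbb{R}^n$; by Proposition~\ref{rigid} this forces $\mathrm{AVR}(M_i^n) < 1$, so in particular each $M_i^n$ is non-flat and there exists $x_{0,i} \in M_i^n$ with $|\mathrm{Rm}|(x_{0,i}) > 0$.

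Next, I would apply a Hamilton-type point-picking lemma to $u := |\mathrm{Rm}|$ with a parameter $A_i \to \infty$: this yields $p_i \in M_i^n$ with $|\mathrm{Rm}|(p_i) > 0$ and $|\mathrm{Rm}| \le 4\,|\mathrm{Rm}|(p_i)$ on the ball of radius $A_i/\sqrt{|\mathrm{Rm}|(p_i)}$ around $p_i$. Rescaling the metric by $\lambda_i^2 := |\mathrm{Rm}|(p_i)$ gives a new sequence $(\tilde M_i^n, \tilde g_i, p_i)$ of complete Ricci-flat manifolds normalised so that $|\mathrm{Rm}|(p_i) = 1$ and $|\mathrm{Rm}| \le 4$ on $B_{A_i}(p_i)$. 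Since $\mathrm{AVR}$ is scale-invariant, $\mathrm{AVR}(\tilde M_i^n) \to 1$ still holds; combined with Bishop--Gromov, this yields
\[
(1-\epsilon_i)\,\omega_n R^n \;\le\; \mathrm{vol}(B_R(p_i)) \;\le\; \omega_n R^n \qquad \forall R > 0,
\]
with $\epsilon_i \to 0$.

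The uniform lower volume bound, combined with the uniform curvature bound, provides via Cheeger's lemma a uniform lower bound on the injectivity radius at $p_i$. Applying Cheeger--Gromov compactness for Ricci-flat manifolds with locally uniform curvature and injectivity bounds, I would extract a subsequence converging in the $C^\infty$-pointed sense to a complete smooth Ricci-flat limit $(X_\infty^n, p_\infty)$. By $C^2$-convergence, $|\mathrm{Rm}_{X_\infty}|(p_\infty) = 1$; passing the sandwich inequality above to the limit gives $\mathrm{vol}(B_R(p_\infty)) = \omega_n R^n$ for all $R > 0$, so $\mathrm{AVR}(X_\infty^n) = 1$. Proposition~\ref{rigid} then forces $X_\infty^n \cong \mathbb{R}^n$, hence $|\mathrm{Rm}_{X_\infty}| \equiv 0$, contradicting $|\mathrm{Rm}_{X_\infty}|(p_\infty) = 1$.

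The delicate step is the upgrade from Gromov--Hausdorff convergence to smooth convergence with pointwise preservation of curvature. This rests on (i) Cheeger's injectivity radius estimate, fed by the non-collapsing supplied by $\mathrm{AVR} \to 1$, and on (ii) the elliptic regularity for the Einstein equation in harmonic coordinates, which upgrades the a priori $C^{1,\alpha}$-convergence to $C^\infty$. I expect this to be the main technical obstacle; once smooth compactness is secured, the rigidity from Proposition~\ref{rigid} automatically closes the argument.
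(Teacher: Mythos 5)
Your proof is correct, but it takes a genuinely different route from the paper's. The paper argues in two stages: first (Step~1) it shows, via a blow-up rescaled by the \emph{harmonic radius} at some point, that both the injectivity and harmonic radii at some point must be infinite once $\mathrm{AVR}$ is close enough to $1$; then (Step~2) it feeds this into the Anderson--Cheeger theory \cite{AC} to produce a global bi-Lipschitz harmonic diffeomorphism $\Phi : M^n \to \mathbb{R}^n$, blows $\Phi$ down to a linear bi-Lipschitz map on the tangent cone at infinity $C(Z)$, concludes $C(Z)\cong\mathbb{R}^n$, and invokes Theorem~\ref{ChC}(1). You instead normalize by the \emph{curvature scale}: Hamilton's point-picking applied to $|\mathrm{Rm}|$ with $A_i\to\infty$ locates $p_i$ where the curvature dominates on expanding balls; rescaling so $|\mathrm{Rm}|(p_i)=1$ and using scale-invariance of $\mathrm{AVR}$ together with Bishop--Gromov gives the two-sided volume estimate, which supplies Cheeger's injectivity-radius bound and hence (via harmonic-coordinate elliptic regularity for the Einstein equation) a $C^\infty$ Ricci-flat limit $(X_\infty,p_\infty)$ with $\mathrm{AVR}=1$ but $|\mathrm{Rm}|(p_\infty)=1$, a contradiction by Proposition~\ref{rigid}. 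Your approach reaches the contradiction in one pass at the level of pointwise curvature and entirely bypasses the paper's Step~2 (no global harmonic chart, no blow-down); the paper's Step~1 is slightly more work but establishes an infinite harmonic/injectivity-radius estimate that is of independent interest. Both routes hinge on the same two ingredients: non-collapsing coming from $\mathrm{AVR}\to1$ to upgrade Gromov--Hausdorff to smooth convergence of Ricci-flat metrics, and the Bishop--Gromov rigidity ($\mathrm{AVR}=1 \Rightarrow \mathbb{R}^n$). One small point worth spelling out in your write-up is why the completeness of each $M_i^n$ makes the point-picking iteration terminate (Hopf--Rinow compactness of the ball in which the iterates accumulate); otherwise the argument is complete.
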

\begin{proof}[Sketch of the proof]

\textbf{Step 1.} Both the injectivity and harmonic radii at some point are infinite if $\mathrm{AVR}$ is close to $1$, quantitatively.

The proof is done by a contradiction argument. For simplicity we focus only on the case of the harmonic radius.
If it is not the case, then there exists a sequence  $M^n_i$ of Ricci flat manifolds with  finite harmonic %injectivity 
radius $r_i$ at some points $x_i \in M_i^n$ and $\mathrm{AVR} \to 1$. 
Consider the rescaled manifolds $r_i^{-1}M_i^n$ whose harmonic %injectivity 
radius at $x_i$ is equal to $1$ by definition. After passing to a subsequence, we have for some pointed metric space $(X^n, x)$
\begin{equation}\label{ri}
    \left(r_i^{-1}M_i^n, x_i\right) \stackrel{\mathrm{pGH}}{\to} (X^n, x).
\end{equation}
Since $X^n$ is $n$-dimensional with non-negative Ricci and its $\mathrm{AVR}$ is equal to $1$, it follows that $X^n$ is isometric to $\mathbb{R}^n$.  Note that
the smooth regularity of $X^n$ is a direct consequence of the elliptic regularity theory together with the Einstein equation $\mathrm{Ric}=0$. Moreover the same reason on the regularity 
%On the other hand the elliptic regularity theory together with $\mathrm{Ric}\equiv 0$ 
allows us to improve the convergence (\ref{ri}) to smooth convergence. In particular, we know that the harmonic %injectivity 
radius of $X^n$ must be $1$ because so is $r_i^{-1}M_i^n$. This is a contradiction.
\smallskip

\textbf{Step 2.} Conclusion.

Thanks to \textbf{Step 1} together with \cite{AC} by Anderson-Cheeger, we can construct a globally bi-Lipschitz embedding harmonic map $\Phi:M^n \to \mathbb{R}^n$. Take a blow-down of $\Phi$;
\begin{equation}
    \tilde \Phi:C(Z) \to \mathbb{R}^n
\end{equation}
where $C(Z)$ is a tangent cone at infinity of $M^n$. Then $\tilde \Phi$ is a linear bi-Lipschitz embedding. Since $C(Z)$ is $n$-dimensional, this shows that $C(Z)$ is isometric to $\mathbb{R}^n$. Thus the conclusion follows from Theorem \ref{ChC}.
\end{proof}
Based on this result, let us define the Anderson constant as follows.
\begin{definition}
For any $n \ge 2$, define $C_{A, 0}(n)$ as the infimum of $\epsilon>0$ satisfying that if a complete Ricci flat manifold $M^n$ satisfies $\mathrm{AVR}>\epsilon$, then $M^n$ is isometric to $\mathbb{R}^n$. 
\end{definition}
The following question is natural.
\begin{question}\label{andconst}
Determine $C_{A, 0}(n)$ explicitly.
\end{question}
Note that Proposition \ref{gap3} yields 
\begin{equation}
C_{A, 0}(n)=0,\quad \text{if $n \le 3$.}
\end{equation}
Thus, we focus on the case when $n \ge 4$. The first simple observation is in dimension $4$. Though the following seems to be well-known to experts, let us give a proof for readers' convenience.

%Although the next result is a corollary of the final result (Theorem \ref{gap5}) of this section, let us provide a proof because this is simple.
\begin{proposition}\label{gap4}
If
\begin{equation}
n=4,\quad \mathrm{Ric}\equiv 0,\quad \mathrm{AVR}>\frac{1}{2},
\end{equation}
then $M^4$ is isometric to $\mathbb{R}^4$.
\end{proposition}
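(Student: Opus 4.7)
The plan is to exploit the structure of tangent cones at infinity for Ricci-flat $4$-manifolds with maximal volume growth, and then reduce to the rigidity already contained in Theorem~\ref{ChC}(1). The overall scheme is to pin down the cross-section of the asymptotic cone from the Ricci-flat equation, compute its volume, and then use the hypothesis $\mathrm{AVR}>1/2$ to kill the only remaining freedom, namely the order of a finite group action on $S^3$.

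First I would fix any tangent cone at infinity $C(Z)$ of $M^4$ along some sequence $r_j\to\infty$. Since $\mathrm{Ric}\equiv 0$ and $\mathrm{AVR}>0$, the classical theory of Cheeger-Colding guarantees that $C(Z)$ exists, is a metric cone, and by volume convergence satisfies $\mathrm{AVR}(C(Z))=\mathrm{AVR}(M^4)$. Next, exploiting the $\epsilon$-regularity theory for Einstein $4$-manifolds (Anderson, Bando-Kasue-Nakajima, Cheeger-Naber), the non-collapsed $4$-dimensional limit $C(Z)$ has singular set of Hausdorff dimension $\le 0$; in particular all points away from the tip are smooth, so the cross-section $Z$ is a smooth closed $3$-manifold. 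Because $C(Z)$ is a Ricci-flat cone, $Z$ is Einstein with $\mathrm{Ric}_Z=2 g_Z$, and in dimension $3$ this means constant sectional curvature $+1$. Hence $Z$ is a spherical space form $S^3/\Gamma$ with $\Gamma$ a finite subgroup of $O(4)$ acting freely on $S^3$.

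Now I would compute volumes. Using $\omega_4=\pi^2/2$ and $\mathcal{H}^3(S^3)=2\pi^2$, the standard formula for the volume of balls in a metric cone gives
\begin{equation}
\mathrm{AVR}\bigl(C(S^3/\Gamma)\bigr)=\frac{\mathcal{H}^3(S^3/\Gamma)}{4\omega_4}=\frac{2\pi^2/|\Gamma|}{2\pi^2}=\frac{1}{|\Gamma|}.
\end{equation}
Combining with Step~1, $\mathrm{AVR}(M^4)=1/|\Gamma|$. The hypothesis $\mathrm{AVR}(M^4)>1/2$ forces $|\Gamma|<2$, i.e.\ $\Gamma=\{\mathrm{id}\}$, so $C(Z)$ is isometric to $\mathbb{R}^4$. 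Finally, Theorem~\ref{ChC}(1) then yields that $M^4$ itself is isometric to $\mathbb{R}^4$.

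I expect the main obstacle to be the smoothness/identification step: asserting that the cross-section of the asymptotic cone is genuinely $S^3/\Gamma$ requires the $4$-dimensional Einstein regularity theory (singular set of codimension $\ge 4$, whence smoothness of a $3$-dimensional link), together with the elementary but essential fact that in dimension $3$ Einstein implies constant sectional curvature. Everything else — the volume convergence, the arithmetic giving $\mathrm{AVR}=1/|\Gamma|$, and the final rigidity — is routine once $Z=S^3/\Gamma$ is in hand.
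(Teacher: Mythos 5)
Your proof is correct and follows essentially the same route as the paper: pass to a tangent cone at infinity $C(Z^3)$, invoke the Cheeger--Naber codimension-4 regularity to see $Z^3$ is smooth, identify $Z^3$ as a spherical space form $\mathbb{S}^3/\Gamma$ with constant curvature $1$, use $\mathrm{AVR}>1/2$ to force $\Gamma$ trivial, and close with Theorem~\ref{ChC}(1). The only cosmetic difference is that you argue the smoothness on the cone (dimension-$0$ singular set away from the tip) while the paper argues directly on the $3$-dimensional cross-section; both are equivalent, and your explicit computation $\mathrm{AVR}(C(\mathbb{S}^3/\Gamma))=1/|\Gamma|$ is a clean way to state the volume count the paper leaves implicit.
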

\begin{proof}
Take a tangent cone at infinity, denoted by $C(Z^3)$, where $C(Z^3)$ is the metric cone over $Z^3$ by a result of Cheeger-Colding \cite{CheegerColding}. Thanks to a result of Cheeger-Naber \cite{CheegerNaber}, $Z^3$ is smooth Einstein with $\mathrm{Ric}\equiv 3$, up to a codimension $4$ singular set which thus must be empty since $Z^3$ is 3-dimensional. Thus $Z^3$ is smooth with $\mathrm{Ric}\equiv 2$, since $\mathrm{dim}(Z^3)=3$ again,  $Z^3$ has  constant sectional curvature $1$. Thus $Z^3$ is isometric to $\mathbb{S}^3/\Gamma$ for some finite subgroup $\Gamma$ of $O(4)$. On the other hand, $\mathrm{AVR}>\frac{1}{2}$ implies
\begin{equation}
\mathrm{vol}(Z^3) >\frac{1}{2}\mathrm{vol}(\mathbb{S}^3).
\end{equation}
This also implies $\sharp \Gamma<2$, namely $\Gamma$ is trivial. Thus $Z^3$ is isometric to $\mathbb{S}^3$. Therefore the conclusion follows from Theorem \ref{ChC}.
\end{proof}
The theorem above is sharp in the sense that the Eguchi-Hanson metric on $T\mathbb{S}^2$, which is Ricci flat, satisfies $\mathrm{AVR}=\frac{1}{2}$. Thus:
\begin{corollary}
It holds
\begin{equation}
C_{A, 0}(4)=\frac{1}{2}.
\end{equation}
\end{corollary}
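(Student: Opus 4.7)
The plan is to establish $C_{A,0}(4)=\frac{1}{2}$ by a matched upper and lower bound, using Proposition \ref{gap4} for one direction and the Eguchi-Hanson example mentioned in the preceding remark for the other.

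For the upper bound $C_{A,0}(4)\le \frac{1}{2}$, I would appeal directly to Proposition \ref{gap4}: any complete Ricci-flat $M^4$ with $\mathrm{AVR}>\frac{1}{2}$ is isometric to $\mathbb{R}^4$. Hence $\epsilon=\frac{1}{2}$ lies in the set whose infimum defines $C_{A,0}(4)$, and this immediately yields $C_{A,0}(4)\le \frac{1}{2}$.

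For the matching lower bound $C_{A,0}(4)\ge \frac{1}{2}$, I would invoke the Eguchi-Hanson metric on $T\mathbb{S}^2$. This is a complete Ricci-flat (hyperk\"ahler, ALE) four-manifold with $\mathrm{AVR}=\frac{1}{2}$ which is certainly not isometric to $\mathbb{R}^4$ (for instance its asymptotic cone is $\mathbb{R}^4/\mathbb{Z}_2$, which is not flat $\mathbb{R}^4$; equivalently, $T\mathbb{S}^2$ has non-trivial topology while $\mathbb{R}^4$ is contractible). Hence for every $\epsilon<\frac{1}{2}$ the Eguchi-Hanson manifold is a Ricci-flat $M^4$ with $\mathrm{AVR}>\epsilon$ that fails to be isometric to $\mathbb{R}^4$, so $\epsilon$ does not belong to the set defining $C_{A,0}(4)$. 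Taking the supremum over $\epsilon<\frac{1}{2}$ gives $C_{A,0}(4)\ge \frac{1}{2}$, and combining the two bounds finishes the argument.

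There is no genuine obstacle here: both ingredients are already in place in the preceding discussion. The only point to be careful about is the bookkeeping of the infimum in the definition of $C_{A,0}(n)$, namely that the defining condition is \emph{open} in $\epsilon$ (it is phrased with $\mathrm{AVR}>\epsilon$ rather than $\mathrm{AVR}\ge \epsilon$), which is precisely what makes the Eguchi-Hanson example with $\mathrm{AVR}=\frac{1}{2}$ rule out every $\epsilon<\frac{1}{2}$ without ruling out $\epsilon=\frac{1}{2}$ itself.
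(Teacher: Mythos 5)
Your argument is correct and is exactly the paper's: Proposition \ref{gap4} gives $C_{A,0}(4)\le\frac{1}{2}$, and the Eguchi--Hanson metric on $T\mathbb{S}^2$ (Ricci-flat, $\mathrm{AVR}=\frac{1}{2}$, not isometric to $\mathbb{R}^4$) gives the matching lower bound. Your added care about the strict inequality in the definition of $C_{A,0}$ is a nice touch but does not change the substance.
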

Note that for higher dimensions, taking the product of $\mathbb{R}^k$ and the Eguchi-Hanson metric, it holds that
\begin{equation}
C_{A, 0}(n+1)\ge C_{A, 0}(n) \ge \frac{1}{2}, \quad \forall n \ge 5. 
\end{equation}

Similarly, we can also obtain the following result.
\begin{proposition}\label{gaptang}
Let 
\begin{equation}
(M_i^n, x_i) \stackrel{\mathrm{pGH}}{\to} (X^n, x) 
\end{equation}
be a non-collapsed pointed Gromov-Hausdorff convergent sequence of pointed Riemannian manifolds $M^n_i$ with $|\mathrm{Ric}| \le n-1$. Assume that a tangent cone at $x$ splits off $\mathbb{R}^{n-4}$.
Then $x$ is a regular point if and only if $D_n(x)>\frac{1}{2}$ holds, where the $n$-dimensional volume density $D_n(x)$ is defined by
\begin{equation}
D_n(x):=\lim_{r \to 0}\frac{\mathrm{vol}(B_r(x))}{\omega_nr^n}.
\end{equation}
\end{proposition}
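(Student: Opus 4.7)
The proof follows the blueprint of Proposition \ref{gap4}, localized at $x$ via the splitting hypothesis. The ``only if'' direction is immediate: at a regular point of a non-collapsed Ricci limit with two-sided Ricci bounds, every tangent cone is $\mathbb{R}^n$, so Cheeger-Colding volume convergence gives $D_n(x)=1>\frac{1}{2}$.

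For the ``if'' direction, first I would apply Cheeger-Colding's metric cone theorem to the tangent cone singled out by the hypothesis: being a metric cone and splitting off $\mathbb{R}^{n-4}$, it has the form $\mathbb{R}^{n-4}\times C(Z^3)$ for a compact $3$-dimensional cross section $Z^3$ of diameter $\le \pi$. This tangent cone arises as a non-collapsed limit of rescalings of the $M_i^n$, and rescaling sends the uniform bound $|\mathrm{Ric}|\le n-1$ to $0$, so $C(Z^3)$ is Ricci-flat in the synthetic non-collapsed sense. Applying the codimension-$4$ regularity of Anderson and Cheeger-Naber \cite{Anderson, Anderson2, CheegerNaber}, the singular set of $Z^3$ has codimension $\ge 4$ inside $C(Z^3)$, hence codimension $\ge 3$ inside $Z^3$; since $\dim Z^3=3$ it must be empty, so $Z^3$ is smooth. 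The Einstein equation on the Ricci-flat $4$-dimensional cone then forces $\mathrm{Ric}_{Z^3}\equiv 2 g_{Z^3}$, which in dimension $3$ means constant sectional curvature $1$, so $Z^3$ is isometric to $\mathbb{S}^3/\Gamma$ for some finite freely acting subgroup $\Gamma\le O(4)$.

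Next I would identify the density. Since $\mathbb{R}^{n-4}\times C(Z^3)$ is itself a metric cone, namely the cone over the spherical join $\mathbb{S}^{n-5}*Z^3$, the density at its apex equals $\mathrm{vol}(\mathbb{S}^{n-5}*Z^3)/\mathrm{vol}(\mathbb{S}^{n-1})$. Writing $\mathbb{S}^{n-1}=\mathbb{S}^{n-5}*\mathbb{S}^3$ and using the join volume formula
\[
\mathrm{vol}(A*B)=\mathrm{vol}(A)\,\mathrm{vol}(B)\int_0^{\pi/2}(\cos t)^{\dim A}(\sin t)^{\dim B}\di t,
\]
the $\mathbb{S}^{n-5}$ and integral factors cancel between numerator and denominator (since $\dim Z^3=\dim \mathbb{S}^3=3$), leaving
\[
D_n(x)=\frac{\mathrm{vol}(Z^3)}{\mathrm{vol}(\mathbb{S}^3)}=\frac{1}{|\Gamma|}.
\]
The assumption $D_n(x)>\frac{1}{2}$ then forces $|\Gamma|<2$, so $\Gamma$ is trivial and the given tangent cone is $\mathbb{R}^n$. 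A final appeal to $\eps$-regularity for non-collapsed Ricci limits (a single Euclidean tangent cone implies regularity, via the intrinsic Reifenberg scheme of \cite{CheegerColding1, Anderson, CheegerNaber}) concludes that $x$ is regular.

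The main obstacle I anticipate is the density identification: intuitively it is the ``right'' formula and matches the 4-dimensional case of Proposition \ref{gap4}, but a rigorous derivation requires either recognizing the product cone $\mathbb{R}^{n-4}\times C(Z^3)$ as the cone over a spherical join, or a direct Fubini computation on the product that must be done with care to cancel the $\mathbb{S}^{n-5}$ contribution. Everything else is a localized repetition of the argument for Proposition \ref{gap4}, with the splitting hypothesis playing the role of the Cheeger-Colding cone structure at infinity.
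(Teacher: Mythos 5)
The paper states this proposition without an explicit proof, prefacing it only with ``Similarly, we can also obtain the following result'' (pointing back to Proposition \ref{gap4}); your proposal is a correct and careful reconstruction of exactly that argument, localized at $x$ via the splitting hypothesis. In particular, the density identification $D_n(x)=\mathrm{vol}(Z^3)/\mathrm{vol}(\mathbb{S}^3)$ through the spherical-join recognition $\mathbb{R}^{n-4}\times C(Z^3)=C(\mathbb{S}^{n-5}*Z^3)$ and the cancellation of the $\mathbb{S}^{n-5}$ factors rigorously supplies the one step that is genuinely new compared with the $n=4$ case, and the remaining steps (Cheeger--Naber codimension-$4$ regularity forcing $Z^3$ smooth, $\mathrm{Ric}_{Z^3}\equiv 2g_{Z^3}$ from Ricci-flatness of the cone, classification $Z^3\cong\mathbb{S}^3/\Gamma$, and regularity of $x$ once a single tangent cone is Euclidean) all match the paper's intended blueprint.
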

%Finally let us provide the following.
%\begin{question}\label{A5}
%Determine $C_{A, 0}(5)$.
%\end{question}

\subsection{Gap metric-rigidity for positive Einstein manifolds}
In this subsection we deal with a positive Einstein analogue of the previous section. Firstly let us recall the following fundamental result in Riemannian geometry, which follows by analyzing the equality case in the Bishop-Gromov inequality.
\begin{theorem}
If
\begin{equation}
    \mathrm{Ric} \ge n-1,
\end{equation}
then
\begin{equation}\label{BM}
    \mathrm{vol} \le \mathrm{vol}(\mathbb{S}^n).
\end{equation}
Moreover the equality in (\ref{BM}) holds if and only if $M^n$ is isometric to $\mathbb{S}^n$.
\end{theorem}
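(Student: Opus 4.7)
The plan is to derive the inequality from the Bishop-Gromov volume comparison and then handle the rigidity via its equality case (or alternatively via Cheng's maximal diameter theorem).

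First, by Myers' theorem the lower Ricci bound $\mathrm{Ric}\ge n-1$ forces $\mathrm{diam}(M^n)\le \pi$, so $M^n$ is compact and $B_\pi(x)=M^n$ for every $x\in M^n$. Let $v(n,r)$ denote the volume of a ball of radius $r$ in $\mathbb{S}^n$. By the Bishop-Gromov inequality, the map $r\mapsto \mathrm{vol}(B_r(x))/v(n,r)$ is non-increasing on $(0,\pi]$ and tends to $1$ as $r\to 0^+$; in particular it is bounded above by $1$ throughout. Evaluating at $r=\pi$ yields
\begin{equation*}
\mathrm{vol}(M^n)=\mathrm{vol}(B_\pi(x))\le v(n,\pi)=\mathrm{vol}(\mathbb{S}^n),
\end{equation*}
which is \eqref{BM}.

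For the rigidity, the ``if'' direction is trivial. Conversely, assume equality in \eqref{BM}. If $\mathrm{diam}(M^n)=D<\pi$, then $M^n=B_D(x)$ and Bishop-Gromov would give $\mathrm{vol}(M^n)\le v(n,D)<v(n,\pi)$, a contradiction. Hence $\mathrm{diam}(M^n)=\pi$, and Cheng's maximal diameter theorem implies that $M^n$ is isometric to $\mathbb{S}^n$. A self-contained alternative is to observe that, under the equality assumption, the Bishop-Gromov ratio must be identically $1$ on $(0,\pi]$ at every point $x\in M^n$ (by monotonicity, since it equals $1$ both in the limit $r\to 0^+$ and at $r=\pi$), and to upgrade this to an isometry by the standard Jacobi-field analysis of the equality case in Bishop-Gromov.

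The main technical point is the rigidity statement, i.e.\ passing from the equal volume to a genuine global isometry with the round sphere. Invoking Cheng's maximal diameter theorem as a black-box is the cleanest route; the self-contained route based on the equality case of Bishop-Gromov requires a careful analysis of the Riccati/Jacobi equation along radial geodesics to deduce that the radial sectional curvatures are identically $1$ and that the shape operator of geodesic spheres matches the model one, so that in geodesic polar coordinates centred at any $x\in M^n$ the metric coincides with the round metric on $B_\pi(0)\subset\mathbb{S}^n$.
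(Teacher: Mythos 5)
Your proposal is correct and follows exactly the route the paper indicates (the paper states the theorem with only the remark that it ``follows by analyzing the equality case in the Bishop-Gromov inequality,'' giving no further details). Your argument via Bishop-Gromov for the inequality, and via Cheng's maximal diameter theorem (or the Jacobi-field analysis of the Bishop-Gromov equality case) for rigidity, is the standard and intended proof.
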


\begin{theorem}[Cheeger-Colding]
    For any $n \ge 2$, there exists $\epsilon_n>0$ such that if 
    \begin{equation}
        \mathrm{Ric}\ge n-1,\quad \mathrm{vol}\ge (1-\epsilon_n)\mathrm{vol}(\mathbb{S}^n),
    \end{equation}
    then $M^n$ is diffeomorphic to $\mathbb{S}^n$.
\end{theorem}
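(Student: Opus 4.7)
The plan is to argue by contradiction and compactness, in the spirit of Theorem \ref{ChC}(2). Suppose the conclusion fails; then one can extract $\epsilon_i \searrow 0$ and closed Riemannian $n$-manifolds $M_i^n$ with $\mathrm{Ric}_{M_i} \ge n-1$ and $\mathrm{vol}(M_i) \ge (1-\epsilon_i)\mathrm{vol}(\mathbb{S}^n)$, yet with no $M_i$ diffeomorphic to $\mathbb{S}^n$. By Myers' theorem one has $\mathrm{diam}(M_i) \le \pi$, so Gromov's precompactness theorem produces, after passing to a subsequence, a Gromov-Hausdorff convergent sequence
\begin{equation*}
M_i^n \stackrel{\mathrm{GH}}{\to} X
\end{equation*}
for some compact metric space $X$.

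The first step is to identify $X$ as $\mathbb{S}^n$. Colding's volume continuity under Gromov-Hausdorff convergence with uniform Ricci lower bound gives $\H^n(X) = \lim_i \mathrm{vol}(M_i) = \mathrm{vol}(\mathbb{S}^n)$; in particular $X$ is non-collapsed and inherits the synthetic Ricci lower bound $\RCD(n-1,n)$. The $\RCD$ version of the maximal-volume rigidity (equivalently, the equality case of the preceding theorem transferred to the synthetic setting) then forces $X$ to be isometric to $\mathbb{S}^n$.

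With the limit identified as the smooth round sphere, the second step is to upgrade Gromov-Hausdorff convergence to a diffeomorphism. Since $X\cong\mathbb{S}^n$ is smooth and $(M_i)$ is non-collapsing with uniform Ricci lower bound, Cheeger-Colding's intrinsic Reifenberg stability theorem produces bi-Hölder homeomorphisms $\Phi_i:X\to M_i$ for $i$ sufficiently large; in the present almost-maximal-volume regime — where the limit is the smooth model $\mathbb{S}^n$ — these can be upgraded to diffeomorphisms, for instance via harmonic coordinate charts of uniform scale on $M_i$, contradicting the choice of the $M_i$.

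The main obstacle I expect is precisely this last upgrade from bi-Hölder to smooth. Under a one-sided Ricci bound alone, Ricci limit spaces are a priori only bi-Hölder manifolds near smooth points, so the global smoothness of the model $\mathbb{S}^n$ — pinned down here by the volume pinching — must really be exploited to obtain a diffeomorphism rather than a mere topological equivalence, as was the case in Theorem \ref{ChC}(2).
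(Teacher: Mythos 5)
The paper states this theorem as a citation to Cheeger--Colding without reproducing a proof, so there is no in-paper argument to compare against; I will therefore assess your proposal on its own terms.

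Your overall skeleton --- contradiction, Myers plus Gromov precompactness, Colding's volume continuity, maximal-volume rigidity to identify the limit as the round $\mathbb{S}^n$, then topological/smooth stability of the convergence --- is the standard and correct route to this theorem, and indeed the same scheme the paper invokes (via \cite{CheegerColding1, Coldingsphere}) in its proof of the subsequent positive-Einstein analogue. Steps one and two of your proposal are fine; your appeal to $\RCD$ rigidity is a perfectly valid (if anachronistic) substitute for Colding's original argument in \cite{Coldingsphere}.

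The place you should be more careful is exactly the one you flag. Your suggested mechanism for upgrading bi-H\"older to diffeomorphism, namely ``harmonic coordinate charts of uniform scale on $M_i$,'' does not work here: under a one-sided bound $\mathrm{Ric}\ge n-1$ there is no Anderson-type harmonic radius estimate, so one cannot produce charts of a definite size on the approximating manifolds $M_i$. The correct input is Cheeger--Colding's smooth stability result (Theorem A.1.12 of \cite{CheegerColding1}): if a non-collapsed sequence of closed $n$-manifolds with $\mathrm{Ric}\ge -(n-1)$ converges in the Gromov--Hausdorff sense to a closed smooth Riemannian manifold, then the manifolds in the sequence are diffeomorphic to the limit for all large $i$. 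That theorem does not need any uniform interior regularity on the $M_i$; the smooth structure comes entirely from the smoothness of the limit $\mathbb{S}^n$, and the intrinsic Reifenberg construction is then run \emph{relative to} the smooth charts of the limit. So your proof is essentially correct once you replace the harmonic-coordinate heuristic in the last step with a direct citation of that stability theorem.
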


The following, proved in \cite{HonM} by Mondello and the first named author, gives a positive Einstein analogue of Theorem \ref{Andersongap}.
\begin{theorem}
For any $n \ge 2$, there exists $\epsilon_n>0$ such that if 
    \begin{equation}
        \mathrm{Ric}\equiv n-1,\quad \mathrm{vol}\ge (1-\epsilon_n)\mathrm{vol}(\mathbb{S}^n),
    \end{equation}
    then $M^n$ is isometric to $\mathbb{S}^n$.
\end{theorem}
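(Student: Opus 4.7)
The plan is to argue by contradiction, in the spirit of the proof of Theorem \ref{Andersongap}. Suppose the statement fails: there exists a sequence of $n$-dimensional closed Einstein manifolds $(M_i^n, g_i)$ with $\mathrm{Ric}(g_i)\equiv n-1$ and $\mathrm{vol}(M_i^n)\to \mathrm{vol}(\mathbb{S}^n)$, none of which is isometric to $\mathbb{S}^n$. By the Cheeger--Colding volume-stability theorem recalled immediately above, each $M_i^n$ is diffeomorphic to $\mathbb{S}^n$ for sufficiently large $i$. Combining Bishop--Gromov monotonicity with Colding's volume convergence theorem, a standard compactness argument together with uniqueness of the limit yields $M_i^n\stackrel{\mathrm{GH}}{\to}\mathbb{S}^n$.

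The second step is to upgrade this to smooth convergence. Since $|\mathrm{Ric}(g_i)|=n-1$ is uniformly bounded and the manifolds are non-collapsed (volumes uniformly bounded below), Anderson's harmonic-coordinate estimates \cite{Anderson} yield a uniform positive lower bound on the harmonic radius, hence $C^{1,\alpha}$ convergence $M_i^n \to \mathbb{S}^n$ in harmonic charts. Reading the Einstein equation $\mathrm{Ric}(g)=(n-1)g$ as a quasi-linear elliptic system for $g$ in such coordinates and applying Schauder elliptic bootstrapping, the convergence improves to $C^\infty$. In particular, one produces diffeomorphisms $\phi_i\colon \mathbb{S}^n \to M_i^n$ such that $\phi_i^* g_i$ is a sequence of Einstein metrics on $\mathbb{S}^n$ with Einstein constant $n-1$ converging smoothly to the round metric $g_{\mathrm{round}}$.

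The contradiction then comes from the infinitesimal rigidity of the round sphere among Einstein metrics, due to Koiso: the Lichnerowicz-type operator governing infinitesimal Einstein deformations at $(\mathbb{S}^n, g_{\mathrm{round}})$ has trivial kernel on transverse--traceless symmetric $2$-tensors. Combined with a Fredholm analysis on the premoduli space of Einstein structures (via the Ebin slice theorem), this implies that $g_{\mathrm{round}}$ is an isolated point, modulo diffeomorphism, in the space of smooth Einstein metrics on $\mathbb{S}^n$ with Einstein constant $n-1$. Hence for large $i$, $\phi_i^* g_i$ is isometric to $g_{\mathrm{round}}$ and so $M_i^n$ is isometric to $\mathbb{S}^n$, contradicting the choice of the sequence. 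The main obstacle I expect is the careful application of Koiso's infinitesimal rigidity to deduce actual local isolation of the round metric in the non-linear Einstein moduli space, so that smooth proximity forces isometry rather than merely vanishing of the linearized deformation space.
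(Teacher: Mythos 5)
Your proof is correct but follows a genuinely different route from the paper's main argument. Both begin identically: a contradiction argument produces a sequence of Einstein manifolds $M_i^n$ with $\mathrm{Ric}\equiv n-1$ and $\mathrm{vol}(M_i^n)\to\mathrm{vol}(\mathbb{S}^n)$, and one upgrades the resulting Gromov--Hausdorff convergence to smooth Cheeger--Gromov convergence to the round sphere; your explicit route via Anderson's harmonic-radius estimates and elliptic bootstrapping on the Einstein system is what lies behind the paper's terser citation of \cite{CheegerColding1, Coldingsphere}. The divergence is in how one then forces isometry. You invoke Koiso's infinitesimal rigidity of the round metric among Einstein metrics, together with the analytic structure of the Einstein premoduli space via the Ebin slice and Fredholm theory, to conclude that $g_{\mathrm{round}}$ is isolated modulo diffeomorphism; this is exactly the alternative proof the paper itself flags as available from \cite[Chapter 12]{Besse}. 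The paper's chosen argument is shorter and avoids the moduli-space machinery: smooth convergence to the round sphere makes the curvature operator of $M_i^n$ eventually positive, and Tachibana's theorem \cite{Tachibana} (a compact Einstein manifold with positive curvature operator has constant sectional curvature), combined with the diffeomorphism $M_i^n\cong\mathbb{S}^n$, forces $M_i^n$ to be isometric to $\mathbb{S}^n$. The subtlety you rightly flag --- promoting vanishing of the linearized deformation space to genuine local isolation in the nonlinear moduli space --- is handled by the Kuranishi-type analysis in Besse, where the premoduli space is a finite-dimensional real-analytic set whose tangent space at $g_{\mathrm{round}}$ injects into the space of essential infinitesimal Einstein deformations, so your argument does close; it simply costs more than the Tachibana shortcut the paper elects to present.
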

\begin{proof}
Let us provide a proof along \cite{HonM}, where another proof can be found by applying the local analytic structure of the moduli space of Einstein metrics, due to \cite[Chapter 12]{Besse} by Besse.

The proof is obtained by a contradiction argument. If it is not the case, then applying \cite{CheegerColding1, Coldingsphere}, there exists a sequence of closed manifolds $M^n_i$ with $\mathrm{Ric}\equiv n-1$ and $\mathrm{vol}(M_i^n) \to \mathrm{vol}(\mathbb{S}^n)$ such that $M_i^n$ smoothly converge to $\mathbb{S}^n$. In particular the curvature operator of $M_i^n$ is positive for any sufficiently large $i$.
Then, a result of Tachibana in \cite{Tachibana} yields that $M_i^n$ has  constant sectional curvature $1$. Since $M_i^n$ is diffeomorphic to $\mathbb{S}^n$, $M_i^n$ must be isometric to $\mathbb{S}^n$. This is a contradiction.
\end{proof}
Based on this result, let us define the Anderson constant for positive Einstein manifolds as follows.
\begin{definition}
    For any $n \ge 2$, define $C_{A, 1}(n)$ as the infimum of $\epsilon>0$ satisfying that if a closed Einstein manifold $M^n$ with $\mathrm{Ric}\equiv n-1$ satisfies $\mathrm{vol}>\epsilon \, \mathrm{vol}(\mathbb{S}^n)$, then $M^n$ is isometric to $\mathbb{S}^n$. 
\end{definition}
The  following question is rather  natural.
\begin{question}\label{andconst}
Determine $C_{A, 1}(n)$ explicitly.
\end{question}
Note that considering the standard projective space of dimension $n$, we know
\begin{equation}\label{eq:A1}
    C_{A, 1}(n)\ge \frac{1}{2},\quad \forall n \ge 2.
\end{equation}
Let us provide a simple observation in dimension $3$, which is already essentially observed in the proof of Theorem \ref{gap4}. 
\begin{proposition}\label{posi}
    If
    \begin{equation}\label{dimension3}
        n=3,\quad \mathrm{Ric}\equiv  2,\quad \mathrm{vol}>\frac{1}{2} \mathrm{vol}(\mathbb{S}^3),
    \end{equation}
    then $M^3$ is isometric to $\mathbb{S}^3$. With (\ref{eq:A1}), %In other words,
    \begin{equation}
        C_{A, 1}(3)=\frac{1}{2}.
    \end{equation}
\end{proposition}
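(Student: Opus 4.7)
The plan is to exploit the fact that in dimension three, being Einstein is equivalent to having constant sectional curvature, so that $M^3$ must be a space form and the volume assumption pins down the quotient to be trivial.

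First I would observe that in dimension three the Weyl tensor vanishes identically, so the full Riemann curvature tensor is determined algebraically by the Ricci tensor. Consequently the Einstein condition $\mathrm{Ric}\equiv 2$ forces $M^3$ to have constant sectional curvature equal to $2/(n-1)=1$. Since $\mathrm{Ric}\ge 2 = (n-1)$, Myers' theorem already guarantees that $M^3$ is compact (with $\mathrm{diam}\le \pi$), and so by the classification of complete space forms of constant curvature one, $M^3$ is isometric to a quotient $\mathbb{S}^3/\Gamma$ for some finite subgroup $\Gamma\subset O(4)$ acting freely (freeness is forced by the absence of boundary and of orbifold singularities in the smooth manifold $M^3$).

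Second, I would use the volume formula
\begin{equation}
\mathrm{vol}(M^3)=\frac{\mathrm{vol}(\mathbb{S}^3)}{\sharp \Gamma}.
\end{equation}
The hypothesis $\mathrm{vol}(M^3)>\tfrac{1}{2}\mathrm{vol}(\mathbb{S}^3)$ then implies $\sharp \Gamma<2$, so $\Gamma$ is trivial and $M^3$ is isometric to $\mathbb{S}^3$. For the final identity, combining this with the lower bound \eqref{eq:A1} (which is witnessed by $\mathbb{RP}^3=\mathbb{S}^3/\{\pm \Id\}$) yields $C_{A,1}(3)=\tfrac{1}{2}$.

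I do not expect any serious obstacle here: the entire argument is essentially the same as the final step of the proof of Proposition \ref{gap4}, but applied directly to $M^3$ rather than to the cross-section $Z^3$ of a tangent cone at infinity. The only point that requires a brief justification is the classical fact that a three-dimensional Einstein manifold has constant sectional curvature, which follows from the vanishing of the Weyl tensor in dimension three.
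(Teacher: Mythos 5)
Your argument is essentially identical to the paper's proof: both reduce to the fact that a $3$-dimensional Einstein manifold with $\mathrm{Ric}\equiv 2$ has constant sectional curvature $1$, hence is a spherical space form $\mathbb{S}^3/\Gamma$, and then use the volume hypothesis to force $\sharp\Gamma<2$. You merely spell out the (standard) justifications — vanishing Weyl tensor, Myers' compactness, the volume formula for the quotient — that the paper leaves implicit.
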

\begin{proof}
    Since we are in dimension $3$, $M^3$ has constant sectional curvature $1$, namely $M^3$ is isometric to $\mathbb{S}^3/\Gamma$ for some finite subgroup $\Gamma$ of $O(4)$. However, the last assumption in (\ref{dimension3}) shows $\sharp \Gamma <2$, namely $\Gamma$ is trivial. Thus we conclude.
\end{proof}
Finally, we ask the following question.
\begin{question}\label{Ap}
    Is it true that
    \begin{equation}\label{aap}
        C_{A, 0}(n+1)=C_{A, 1}(n)
    \end{equation}
    for any $n \ge 3$?
\end{question}
As already observed, we know that (\ref{aap}) is correct if $n=3$, and that (\ref{aap}) is incorrect if $n=2$.

\begin{remark}\label{FS}
It is natural to ask whether $C_{A, 1}(4)=\frac{1}{2}$ holds. In fact,  it was pointed out by Shengxuan Zhou to the authors that this fails due to the Fubini-Study metric on $\mathbb{CP}^2$. The precise description is as follows.

Let us denote by $g_{{\rm FS}}$ the Fubini-Study metric on $\mathbb{CP}^2$, constructed by the Hopf fibration:
\begin{equation}
\mathbb{S}^1\to \mathbb{S}^5 \to \mathbb{CP}^2,
\end{equation}
where each fiber is a  great circle in $\mathbb{S}^5$. It is possible to compute the volume by 
\begin{equation}
    \mathrm{vol}(\mathbb{CP}^2)=\frac{\mathrm{vol}(\mathbb{S}^5)}{2\pi}=\frac{1}{2}\pi^2.
\end{equation}
On the other hand, it is well-known that $g_{{\rm FS}}$ is a positive Einstein metric whose Einstein constant is equal to $6$.

Therefore, considering the normalization $2\, g_{{\rm FS}}$, we obtain an Einstein $4$-manifold with $\mathrm{Ric}\equiv 3$ and $\mathrm{vol}=2\pi^2$. Since $\mathrm{vol}(\mathbb{S}^4)=(8\pi^2)/3$, the above discussion yields
\begin{equation}
    C_{A, 1}(4)\ge \frac{3}{4} \left(=2\pi^2 \cdot \frac{3}{8\pi^2}\right).
\end{equation}
\end{remark}

In connection with Question \ref{Ap}, let us provide two partial results. 

\begin{proposition}\label{gap5}
If
\begin{equation}
n=5,\quad \mathrm{Ric}\equiv 0,\quad \mathrm{AVR}>C_{A, 1}(4)
\end{equation}
then $M^5$ is isometric to $\mathbb{R}^5$. In other words,
\begin{equation}
C_{A, 0}(5)\le C_{A, 1}(4).
\end{equation}
\end{proposition}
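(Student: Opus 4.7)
The plan is to mimic Proposition \ref{gap4} in the next dimension up, combining a cross-section analysis of the tangent cone at infinity with Cheeger--Naber's codimension-$4$ regularity and the very definition of $C_{A, 1}(4)$.

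First I would take a tangent cone at infinity of $M^5$, which by Cheeger--Colding \cite{CheegerColding} is a metric cone $C(Z^4)$ over a compact metric space $Z^4$. A direct computation on cones gives
\begin{equation*}
\mathrm{AVR}(M^5) = \frac{\mathrm{vol}(Z^4)}{\mathrm{vol}(\mathbb{S}^4)},
\end{equation*}
so the hypothesis $\mathrm{AVR}(M^5) > C_{A, 1}(4)$ rewrites as
\begin{equation}\label{eq:volZ4gap5}
\mathrm{vol}(Z^4) > C_{A, 1}(4) \cdot \mathrm{vol}(\mathbb{S}^4).
\end{equation}
By Cheeger--Naber \cite{CheegerNaber} (exactly as invoked in the proof of Proposition \ref{gap4}), $Z^4$ is smooth Einstein with $\mathrm{Ric}\equiv 3$ away from a codimension-$4$ singular set, which in dimension $4$ is at most discrete; moreover the local structure at each such singular point is that of an orbifold singularity modelled on $\mathbb{R}^4/\Gamma_p$ for some finite $\Gamma_p \subset O(4)$.

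Second I would rule out any singularity of $Z^4$. If $p\in Z^4$ had non-trivial local group $\Gamma_p$, then $|\Gamma_p|\ge 2$ and the volume density of $Z^4$ at $p$ equals $1/|\Gamma_p|$. Applying (orbifold) Bishop--Gromov monotonicity to the ball $B_r(p)$ with $r\ge \mathrm{diam}(Z^4)$ then forces
\begin{equation*}
\mathrm{vol}(Z^4) \le \frac{\mathrm{vol}(\mathbb{S}^4)}{|\Gamma_p|} \le \frac{1}{2} \mathrm{vol}(\mathbb{S}^4).
\end{equation*}
Since $C_{A, 1}(4)\ge \tfrac{1}{2}$ by \eqref{eq:A1}, this contradicts \eqref{eq:volZ4gap5}. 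Hence $Z^4$ is a smooth closed Einstein $4$-manifold with $\mathrm{Ric}\equiv 3$ and $\mathrm{vol}(Z^4)> C_{A, 1}(4)\,\mathrm{vol}(\mathbb{S}^4)$; the very definition of $C_{A, 1}(4)$ then forces $Z^4$ to be isometric to $\mathbb{S}^4$, and accordingly $C(Z^4)$ to be isometric to $\mathbb{R}^5$. The conclusion that $M^5$ is isometric to $\mathbb{R}^5$ then follows from Theorem \ref{ChC}(1).

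The hard part will be the no-singularity step: one has to know that $Z^4$ is honestly an Einstein orbifold, so that the local density computation $1/|\Gamma_p|$ and the orbifold Bishop--Gromov comparison are available. This rests on the non-collapsed codimension-$4$ structure theory for four-dimensional limits of manifolds with uniformly bounded Ricci, applied to the cross-section of the blow-down cone rather than to a limit of closed manifolds directly. Once that orbifold structure is granted, the rest is a clean volume comparison against the defining property of $C_{A, 1}(4)$.
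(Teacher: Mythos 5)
Your overall strategy matches the paper's: pass to a tangent cone at infinity $C(Z^4)$, reduce the hypothesis $\mathrm{AVR}>C_{A,1}(4)$ to $\mathrm{vol}(Z^4)>C_{A,1}(4)\,\mathrm{vol}(\mathbb{S}^4)$, show $Z^4$ is a smooth closed Einstein $4$-manifold, and then quote the definition of $C_{A,1}(4)$. The gap is exactly where you flag it: the claim that Cheeger--Naber ``exactly as invoked in Proposition \ref{gap4}'' gives that $Z^4$ is an orbifold with local models $\mathbb{R}^4/\Gamma_p$ is not justified. What Cheeger--Naber applied to the $5$-dimensional cone $C(Z^4)$ gives is that the singular set has Hausdorff codimension $\geq 4$, hence that $\mathcal{S}(Z^4)$ is at most discrete; it does not, on its own, identify the tangent cones at those points as $\mathbb{R}^4/\Gamma$, and $Z^4$ itself is not a non-collapsed limit of closed $4$-manifolds with bounded Ricci, so the Anderson/Bando--Kasue--Nakajima $4$-dimensional orbifold theory is not directly available. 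In Proposition \ref{gap4} the cross-section is $3$-dimensional, so codimension-$4$ regularity forces the singular set to be empty outright; that shortcut is unavailable one dimension up.

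The paper fills this gap with a two-step argument you do not supply. First, at a point $z\in Z^4$ one takes a tangent cone $C(W^3)$ of $Z^4$; applying codimension-$4$ regularity again (now to a $4$-dimensional iterated tangent cone) forces $W^3$ to be smooth, hence a constant-curvature-$1$ space form $\mathbb{S}^3/\Gamma$. Second, to pin down $\Gamma$ one regards $z$ as a point of $\partial B_1(p)\subset C(Z^4)$ and uses that the $5$-dimensional volume density $D_5$ is lower semicontinuous (Bishop--Gromov), scale-invariant along the radial geodesic $\gamma$ from the vertex $p$ through $z$, and equals $\mathrm{AVR}>C_{A,1}(4)\geq 1/2$ at $p$; hence $D_5(z)>1/2$, which translates into $\mathrm{vol}(W^3)>\tfrac12\mathrm{vol}(\mathbb{S}^3)$ and, by Proposition \ref{posi}, $W^3=\mathbb{S}^3$, i.e. $z$ is a regular point. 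Your Bishop--Gromov comparison $\mathrm{vol}(Z^4)\leq \mathrm{vol}(\mathbb{S}^4)/|\Gamma_p|$ is essentially the same numerical input (the density at $p\in Z^4$ equals $\mathrm{vol}(W^3)/\mathrm{vol}(\mathbb{S}^3)$), but you invoke it only after assuming the conclusion of this dimension-reduction step. So the proposal is sound in outline but incomplete where the proof actually has to work.
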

\begin{proof}
Let us take a tangent cone at infinity, denoted by $C(Z^4)$, where $Z^4$ is smooth $4$-dimensional Einstein with $\mathrm{Ric}\equiv 3$ on the regular set because of \cite{CheegerColding1}. Firstly let us prove that $Z^4$ is smooth. It is enough to show that no singular point exists.
 
Take a point $z \in Z^4$ and a tangent cone at $z$ of $Z^4$, denoted by $C(W^3)$. Applying \cite{CheegerColding1} again, we know that $W^3$ is smooth $3$-dimensional Einstein with $\mathrm{Ric}\equiv 2$ on the regular set. Since $W^3$ is $3$-dimensional, then $W^3$ has no singular points because of the same reason as observed in the proof of Proposition \ref{gap4}, due to \cite{CheegerNaber}.

On the other hand, we regard $z$ as a point in $\partial B_1(p)$, where $p$ is the pole of $C(Z^4)$, and we consider a minimal geodesic $\gamma$ from $z$ to $p$ in $C(Z^4)$. Note that the $5$-dimensional volume density $D_5$
is lower semicontinuous along $\gamma$ because of the Bishop-Gromov inequality. Since $D_5$ is constant on $\gamma \setminus \{p\}$ and $D_5(p)=\mathrm{AVR}>C_{A, 1}(4)\ge \frac{1}{2}$,  we have
\begin{equation}
D_5(z)>C_{A, 1}(4)\ge \frac{1}{2}
\end{equation}
which shows that the volume of $W^3$ is greater than $\frac{1}{2}\mathrm{vol}(\mathbb{S}^3)$. Therefore Proposition \ref{posi} shows that $W^3$ is isometric to $\mathbb{S}^3$. Thus $z$ is a regular point, namely $Z^4$ is smooth.

Moreover the observation above also allows us to conclude that the volume of $Z^4$ is greater than $C_{A, 1}(4)\cdot\mathrm{vol}(\mathbb{S}^4)$. Thus by definition of $C_{A, 1}(4)$, $Z^4$ is isometric to $\mathbb{S}^4$. Therefore we conclude by Theorem \ref{rigid}. %namely $z$ is a regular point.  Therefore $Z$ is smooth.

\end{proof}
The next proposition tells us that there are not so many examples of positive Einstein $4$-manifolds whose volume is greater than the half of the volume of the unit $4$-sphere.
\begin{proposition}
    For any $\epsilon>0$, the set of all Einstein manifolds $M^4$ of dimension $4$ with $\mathrm{Ric}\equiv 3$ and 
    \begin{equation}\label{vollower}
    \mathrm{vol} \ge (1+\epsilon)\cdot \frac{1}{2} \mathrm{vol}(\mathbb{S}^n)
    \end{equation}
    is compact with respect to the $C^{\infty}$-convergence.
\end{proposition}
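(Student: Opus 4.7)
The plan is to combine the Anderson--Cheeger--Naber orbifold compactness for $4$-dimensional Einstein manifolds with a volume density argument, analogous to those used in Propositions \ref{gap4}, \ref{posi} and \ref{gap5}.

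First I would invoke Myers' theorem: the assumption $\mathrm{Ric}\equiv 3$ forces $\mathrm{diam}(M^4)\le \pi$ uniformly. Combined with the two-sided bound $|\mathrm{Ric}|\equiv 3$ and the volume lower bound \eqref{vollower}, the results of \cite{Anderson, Anderson2, CheegerNaber} guarantee that any sequence $\{M_i^4\}$ in the set admits a subsequence converging in the Gromov-Hausdorff topology to an Einstein orbifold $X^4$ with $\mathrm{Ric}\equiv 3$ and finitely many isolated orbifold singularities; moreover, the convergence is $C^\infty$ Cheeger--Gromov on the regular part of $X^4$. If one can show that $X^4$ has no singular points, then $X^4$ itself belongs to the prescribed set and the convergence upgrades to global $C^\infty$-convergence (in the sense of diffeomorphisms from a fixed smooth manifold), establishing the desired compactness.

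To rule out orbifold singularities, I would use the volume density. By the non-collapsed volume convergence of \cite{Colding, CheegerColding1}, $\mathrm{vol}(X^4)\ge (1+\epsilon)\cdot \tfrac{1}{2}\mathrm{vol}(\mathbb{S}^4)$. Applying the Bishop--Gromov monotonicity of $\mathrm{vol}(B_r(x))/V^{3,4}_r$ on the non-collapsed Ricci limit $X^4$ between $r\to 0$ and $r=\mathrm{diam}(X^4)\le \pi$ yields
\begin{equation}
D_4(x)\ge \frac{\mathrm{vol}(X^4)}{\mathrm{vol}(\mathbb{S}^4)}\ge \frac{1+\epsilon}{2}>\frac{1}{2},\qquad \forall x\in X^4.
\end{equation}
On the other hand, at any orbifold singularity $x$ the tangent cone is isometric to $\mathbb{R}^4/\Gamma$ with $\Gamma$ a non-trivial finite subgroup of $O(4)$ acting freely on $\mathbb{S}^3$, so $D_4(x)=1/\sharp\Gamma \le \tfrac{1}{2}$, contradicting the previous inequality. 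Hence $X^4$ is smooth.

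The main technical input is the Anderson--Cheeger--Naber orbifold compactness theorem for $4$-dimensional Einstein manifolds with two-sided Ricci bound; once this structural result is granted, the strict inequality $(1+\epsilon)/2 > 1/2$ makes the exclusion of orbifold points immediate. The only delicate point is really the asymmetry between the strict lower bound on $D_4$ at every point of the limit and the upper bound $1/2$ at any putative orbifold singularity; the hypothesis \eqref{vollower} is designed precisely to produce this gap, and without the strict factor $(1+\epsilon)$ the argument would fail (as Remark \ref{FS} indicates).
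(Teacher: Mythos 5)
Your proposal is correct and follows essentially the same route as the paper: extract a non-collapsed GH-limit $X^4$, use volume convergence plus Bishop--Gromov to deduce $D_4(x)>\tfrac{1}{2}$ at every point, rule out orbifold singularities (where the density would be $1/\sharp\Gamma\le\tfrac{1}{2}$), and upgrade to $C^\infty$-convergence via Anderson/Cheeger--Colding. The paper packages the density dichotomy into Proposition \ref{gaptang} rather than unpacking the orbifold structure directly, but the underlying argument is identical.
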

\begin{proof}
Take a sequence $M^4_i$ of Einstein manifolds with $\mathrm{Ric}\equiv 3$ and satisfying (\ref{vollower}).  After passing to a subsequence, we have
\begin{equation}\label{GH}
    M_i^4 \stackrel{\mathrm{GH}}{\to} X^4
\end{equation}
for some compact non-collapsed Ricci limit space $X^4$. Since $X^4$ also satisfies (\ref{vollower}), in particular $D_4(x)>\frac{1}{2}$ by the Bishop-Gromov inequality. Thus $X^4$ has no singular points by Proposition \ref{gaptang} (or by just applying \cite{Anderson, Anderson2} by Anderson). It follows from \cite{CheegerColding1} that (\ref{GH}) can be improved to the smooth convergence.
\end{proof}
See, for instance, \cite{SX} for an analogous gap metric-rigidity result for negative Einstein manifolds.

\section{Gap topological-rigidity under lower curvature bounds}

\subsection{Lower sectional curvature bounds}

Let us start by recalling a classical result due to Marenich and Topogonov \cite{MarTop85Rus} (see also \cite{MarTop91Eng}).

\begin{theorem}[Marenich-Topogonov]\label{thm:MarTop}
Let $M^n$ be a complete Riemannian  $n$-dimensional manifold with   $$\mathrm{sec}\geq 0 \text{ and } \mathrm{AVR}> 0.$$
Then $M^n$ is diffeomorphic to $\mathbb{R}^n$.
\end{theorem}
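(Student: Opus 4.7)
The plan is to invoke the Cheeger--Gromoll soul theorem and then to force the soul to be a single point by a tube-volume argument. First I would apply the soul theorem: under $\mathrm{sec}\geq 0$, it provides a compact, totally convex, totally geodesic submanifold $\Sigma^k\hookrightarrow M^n$ (the soul) such that $M^n$ is diffeomorphic to the total space of the normal bundle $\nu(\Sigma^k)$. Since the normal bundle of a point is $\mathbb{R}^n$, it is enough to prove $k=\dim\Sigma=0$ whenever $\mathrm{AVR}(M^n)>0$.

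Next I would argue by contradiction, assuming $k\geq 1$. Fix $p\in\Sigma$; since $\Sigma$ is compact, for $r\geq\mathrm{diam}(\Sigma)$ one has $\Sigma\subset B_r(p)$, and every $x\in B_r(p)$ satisfies $\mathrm{d}(x,\Sigma)\leq \mathrm{d}(x,p)\leq r$. Hence
\begin{equation*}
B_r(p)\subset T_r(\Sigma):=\{x\in M^n:\mathrm{d}(x,\Sigma)\leq r\}.
\end{equation*}
Every $x\in T_r(\Sigma)$ lies in the image under the normal exponential map $\exp^{\perp}\colon \nu(\Sigma)\to M^n$ of a vector $v\in\nu_q\Sigma$ with $|v|\leq r$ (take minus the initial velocity of a minimizing geodesic from $x$ to its foot in $\Sigma$). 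The area formula then gives
\begin{equation*}
\mathrm{vol}(T_r(\Sigma))\leq \int_\Sigma\int_{v\in\nu_q\Sigma,\,|v|\leq r}J(q,v)\,\di v\,\di\mathrm{vol}_\Sigma(q),
\end{equation*}
where $J(q,v)$ denotes the Jacobian of $\exp^\perp$. Since $\Sigma$ is totally geodesic and $\mathrm{sec}\geq 0$, Rauch-type comparison applied to the Jacobi fields along $t\mapsto\exp_q(tv)$ yields $J(q,v)\leq 1$, the Euclidean value, whence
\begin{equation*}
\mathrm{vol}(T_r(\Sigma))\leq \mathrm{vol}(\Sigma)\cdot\omega_{n-k}\,r^{n-k}.
\end{equation*}

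Combining the two inclusions,
\begin{equation*}
\frac{\mathrm{vol}(B_r(p))}{\omega_n\,r^n}\leq \frac{\mathrm{vol}(\Sigma)\,\omega_{n-k}}{\omega_n}\cdot r^{-k}\longrightarrow 0\quad\text{as }r\to\infty,
\end{equation*}
contradicting $\mathrm{AVR}(M^n)>0$. We conclude $k=0$: the soul is a point and $M^n$ is diffeomorphic to $\mathbb{R}^n$.

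The main delicate step I expect is the Jacobian bound $J(q,v)\leq 1$. One has to decompose normal Jacobi fields along $t\mapsto\exp_q(tv)$ into tangential variations (with $J(0)\in T_q\Sigma$ and, because $\Sigma$ is totally geodesic, $J'(0)=0$) and fibre variations (with $J(0)=0$), and then apply Rauch~I to conclude that $|J(t)|$ grows no faster than in the Euclidean model. A cleaner alternative would be to appeal to Perelman's resolution of the soul conjecture: the Sharafutdinov retraction $M^n\to\Sigma$ is a $C^\infty$ Riemannian submersion, and the desired tube estimate then follows directly from the coarea formula applied to this $1$-Lipschitz projection.
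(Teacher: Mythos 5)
The paper does not provide a proof of Theorem~\ref{thm:MarTop}: it is stated as a recalled classical result with a citation to Marenich--Toponogov, so there is no argument in the text to compare yours against. Your proof stands on its own, and it is correct in strategy and in the key steps. The route---apply the Cheeger--Gromoll soul theorem to get a compact totally geodesic soul $\Sigma^k$ with $M^n\cong\nu(\Sigma^k)$, then show that $\mathrm{AVR}>0$ forces $k=0$ by a tube-volume comparison---is the natural and standard way to see this. The containment $B_r(p)\subset T_r(\Sigma)$ and the surjectivity of $\exp^\perp$ from $\{|v|\le r\}$ onto $T_r(\Sigma)$ are correct, and the resulting decay $\mathrm{vol}(B_r(p))/\bigl(\omega_n r^n\bigr)=O(r^{-k})$ gives the contradiction cleanly.

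You are right to single out the Jacobian bound $J(q,v)\le 1$ as the delicate step. For a totally geodesic submanifold and $\mathrm{sec}\geq 0$, this is exactly the Heintze--Karcher (equivalently Gray--Vanhecke) tube comparison: one splits the Jacobi fields along the normal geodesic into the ``parallel'' block ($J(0)\in T_q\Sigma$, $J'(0)=0$, using total geodesy) and the ``focal'' block ($J(0)=0$), and the comparison is for the determinant of the full matrix solution, not merely the norms of individual fields; but this determinant comparison is precisely what Heintze--Karcher provides, and in the totally geodesic, $\mathrm{sec}\geq 0$ setting it yields the flat bound. Note that here one genuinely needs a sectional (not just Ricci) lower bound, because in codimension $\ge 2$ the tangential Jacobi fields must be controlled plane by plane---which your hypothesis provides. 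Your alternative via Perelman's theorem (the Sharafutdinov retraction being a smooth Riemannian submersion, hence $1$-Lipschitz) also works and avoids the Jacobi-field bookkeeping, at the cost of invoking a much harder theorem. Either way the argument is sound.
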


To put Theorem \ref{thm:MarTop} in perspective, recall that Cheeger-Gromoll-Perelman soul Theorem implies that if  $M^n$ is a complete non-compact $n$-dimensional Riemannian manifold with non-negative sectional curvature, admitting a point $\bar{x}$ such that \emph{all} sectional curvatures at $\bar{x}$ are positive, then $M^n$ is diffeomorphic to $\mathbb{R}^n$.

Note also that Theorem \ref{thm:MarTop} does not generalise to the non-smooth setting of Alexandrov spaces with non-negative curvature. Indeed the Euclidean cone over $\mathbb{RP}^2$, that we denote by $C(\mathbb{RP}^2)$, is an example of a 3-dimensional Alexandrov space wih non-negative curvature, with  AVR $=1/2$, but it is not homeomorphic to $\mathbb{R}^3$. 

In Theorem \ref{thm:AlexRN}, we will show that as soon as AVR $>1/2$ then the topological rigidity holds.

The following result is a consequence of Grove-Petersen radius sphere theorem \cite{GrPet}.

\begin{theorem}\label{thm:AlexSN}
Let $X^N$ be an $N$-dimensional Alexandrov space with $$\mathrm{curv}\geq 1 \text{ and } \mathcal{H}^N(X^N)> \frac{1}{2}\mathrm{vol}(\mathbb{S}^N).$$
Then $X^N$ is homeomorphic to $\mathbb{S}^N$.
\end{theorem}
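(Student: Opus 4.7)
The plan is to reduce the claim to the Grove--Petersen radius sphere theorem \cite{GrPet}, in its extension to the Alexandrov category, which asserts that every $N$-dimensional Alexandrov space $X^N$ of curvature $\geq 1$ whose radius
\begin{equation*}
\mathrm{rad}(X^N):=\min_{x\in X^N}\,\max_{y\in X^N}\mathsf{d}(x,y)
\end{equation*}
satisfies $\mathrm{rad}(X^N)>\pi/2$ is homeomorphic to $\mathbb{S}^N$. Accordingly, it suffices to verify that the volume hypothesis forces $\mathrm{rad}(X^N)>\pi/2$.

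I would argue this by contradiction. If $\mathrm{rad}(X^N)\leq \pi/2$, then by definition of the radius there exists a point $x_0\in X^N$ such that $X^N=\bar{B}_{\pi/2}(x_0)$. The Bishop--Gromov volume comparison, valid for Alexandrov spaces with $\mathrm{curv}\geq 1$ with respect to the model space $\mathbb{S}^N$, then yields
\begin{equation*}
\mathcal{H}^N(X^N)=\mathcal{H}^N\bigl(\bar{B}_{\pi/2}(x_0)\bigr)\leq \frac{1}{2}\mathrm{vol}(\mathbb{S}^N),
\end{equation*}
since a closed geodesic ball of radius $\pi/2$ in the round unit $N$-sphere is a hemisphere. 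This contradicts the assumption $\mathcal{H}^N(X^N)>\frac{1}{2}\mathrm{vol}(\mathbb{S}^N)$. Hence $\mathrm{rad}(X^N)>\pi/2$, and Grove--Petersen delivers the desired homeomorphism $X^N\cong \mathbb{S}^N$.

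The only genuine ingredient is the Grove--Petersen theorem in the Alexandrov setting; beyond that, the argument is a one-line application of Bishop--Gromov and no serious obstacle should arise. The volume threshold $\frac{1}{2}\mathrm{vol}(\mathbb{S}^N)$ is sharp, as witnessed by $\mathbb{RP}^N=\mathbb{S}^N/\{\pm 1\}$, which is a smooth $N$-dimensional Alexandrov space of $\mathrm{curv}\geq 1$ with $\mathcal{H}^N(\mathbb{RP}^N)=\frac{1}{2}\mathrm{vol}(\mathbb{S}^N)$ and is not homeomorphic to $\mathbb{S}^N$ for $N\geq 2$; this is the compact counterpart of the $C(\mathbb{RP}^2)$ obstruction mentioned above for the noncompact Marenich--Topogonov analogue.
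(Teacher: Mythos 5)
Your argument is exactly the paper's: Bishop--Gromov volume comparison against $\mathbb{S}^N$ forces $\mathrm{rad}(X^N)>\pi/2$, and then the Grove--Petersen radius sphere theorem applies. You merely spell out the contradiction step and the sharpness example $\mathbb{RP}^N$ in more detail than the paper does.
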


\begin{proof}
Bishop-Gromov inequality in Alexandrov spaces, combined with the assumption that $\mathcal{H}^N(X^N)\geq \mathcal{H}^N(\mathbb{S}^N)/2$ imply that the radius of $X^N$ is strictly larger than $\pi/2$. Grove-Petersen radius sphere theorem \cite{GrPet} gives that  $X^N$ is homeomorphic to $\mathbb{S}^N$.
\end{proof}

The next result is a consequence of Theorem \ref{thm:AlexSN}, the Bishop-Gromov inequality, and Perelman's stability theorem.

\begin{theorem}\label{thm:AlexRN}
Let $X^N$ be an $N$-dimensional Alexandrov space with $$\mathrm{curv}\geq 0 \text{ and } \mathrm{AVR}> 1/2.$$
Then $X^N$ is homeomorphic to $\mathbb{R}^N$.
\end{theorem}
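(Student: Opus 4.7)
The plan is to reduce the statement to Theorem \ref{thm:AlexSN} by passing to the asymptotic cone of $X^N$ at infinity, and then to transfer the resulting topological information back to $X^N$ via Perelman's stability theorem.

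First I would fix a point $p\in X^N$ and a sequence $R_i\to\infty$, and show that a subsequence of the rescaled pointed Alexandrov spaces $(X^N,\,R_i^{-1}\mathsf{d},\,p)$ converges in the pointed Gromov--Hausdorff topology to some $N$-dimensional Alexandrov space $(X_\infty,o)$ with $\mathrm{curv}\ge 0$. Since $\mathrm{vol}(B_r(p))/(\omega_N r^N)$ is non-increasing in $r$ by Bishop--Gromov and converges to $\mathrm{AVR}(X^N)$ as $r\to\infty$, the limit has normalized volume identically equal to $\mathrm{AVR}(X^N)$ on every scale, and the equality case of Bishop--Gromov then forces $X_\infty=C(Y)$ to be the metric cone over an $(N-1)$-dimensional Alexandrov space $Y$ with $\mathrm{curv}\ge 1$. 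The standard volume identity for cones then gives
\[
\mathcal{H}^{N-1}(Y)\;=\;\mathrm{AVR}(X^N)\cdot\mathrm{vol}(\mathbb{S}^{N-1})\;>\;\tfrac{1}{2}\,\mathrm{vol}(\mathbb{S}^{N-1}),
\]
so Theorem \ref{thm:AlexSN} applied to $Y$ yields that $Y$ is homeomorphic to $\mathbb{S}^{N-1}$, whence $C(Y)\cong\mathbb{R}^N$.

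Next I would invoke Perelman's stability theorem for the non-collapsed convergence $(X^N,R_i^{-1}\mathsf{d},p)\to (C(Y),o)$. In its pointed non-compact form (cf.\ Kapovitch's survey on Perelman's stability theorem), this produces, for each $R>0$ and all sufficiently large $i$, a homeomorphism between the ball $B_R(o,C(Y))$ and the rescaled ball $B_R(p,\,R_i^{-1}\mathsf{d})$; moreover one can arrange compatibility of these homeomorphisms as $R\to\infty$ so as to obtain a global homeomorphism $C(Y)\to X^N$. Combined with $C(Y)\cong\mathbb{R}^N$ this concludes the argument.

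The hard part will be this last step: Perelman's stability is classically stated for compact Alexandrov spaces, so extracting a global homeomorphism in the pointed non-compact setting requires care with the compatibility of the ball-level homeomorphisms as the radius grows. If this direct route turned out to be delicate, the alternative I would try is to first show that $X^N$ is a topological $N$-manifold --- the tangent-cone argument applied at each $x\in X^N$ gives $\Sigma_x\cong\mathbb{S}^{N-1}$ via Theorem \ref{thm:AlexSN}, because Bishop--Gromov forces $\mathcal{H}^{N-1}(\Sigma_x)\ge\mathrm{AVR}(X^N)\cdot\mathrm{vol}(\mathbb{S}^{N-1})>\mathrm{vol}(\mathbb{S}^{N-1})/2$, so Perelman's conical neighbourhood theorem then applies pointwise --- and then combine contractibility of $X^N$ (via the gradient flow of $\mathsf{d}(p,\cdot)^2$) and simple-connectedness at infinity (inherited from $C(Y)\cong\mathbb{R}^N$ through stability) with the classical topological characterizations of $\mathbb{R}^N$ (Stallings for $N\ge 5$, Freedman for $N=4$, and elementary arguments for $N\le 3$).
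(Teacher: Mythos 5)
Your first route is essentially the paper's proof: pass to a tangent cone at infinity, use the Bishop--Gromov rigidity to see that it is a metric cone $C(Z^{N-1})$ over an $(N-1)$-dimensional Alexandrov space with $\mathrm{curv}\geq 1$ and $\mathcal{H}^{N-1}(Z^{N-1})>\mathrm{vol}(\mathbb{S}^{N-1})/2$, apply Theorem~\ref{thm:AlexSN} to conclude $C(Z^{N-1})\cong\mathbb{R}^N$, and then transfer the conclusion back to $X^N$ via Perelman stability on large balls. You correctly flag the gluing of the ball-level homeomorphisms as the delicate point, but you leave it unresolved; the paper resolves it exactly there, by invoking Siebenmann's deformation-of-homeomorphisms theory \cite{Sieb} following the scheme of \cite[Theorem~3.5]{KapMon21}. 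So up to supplying that one device, your primary argument is the paper's argument.

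Your proposed alternative (topological-manifold structure from the conical-neighbourhood theorem, then contractibility plus simple-connectedness at infinity plus the Stallings/Freedman characterizations of $\mathbb{R}^N$) is a genuinely different path, but two of its steps need more than you indicate. Contractibility does not follow from the gradient flow of $\mathsf{d}(p,\cdot)^2$: under $\mathrm{curv}\geq 0$ this function is $1$-\emph{concave}, so Petrunin's gradient flow is well-defined only in the ascending direction, pushing points \emph{away} from $p$; to contract one would instead need something like the Sharafutdinov/soul retraction together with an argument that $\mathrm{AVR}>0$ forces the soul to be a point. And to extract simple connectedness at infinity from the blow-down you would again have to use a quantitative pointed stability statement on large annuli, so this detour does not actually avoid the technical difficulty you were trying to sidestep. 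Either route can be made to work, but neither is complete as written without the stability/gluing input.
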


\begin{proof}
Let $Y^N$ be a tangent cone at infinity of $X^N$. It is easily checked that also $Y^N$ has $\mathrm{AVR}> 1/2$.
\smallskip

\textbf{Step 1}. $Y^N$ is homeomorphic to $\mathbb{R}^N$.

We know that $Y^N=C(Z^{N-1})$ is the metric cone over an $(N-1)$-dimensional Alexandrov space $Z^{N-1}$ with $\mathrm{curv}\geq 1$. Moreover, since $Y^N$ has $\mathrm{AVR}> 1/2$, it follows that $\mathcal{H}^{N-1}(Z^{N-1})>\mathcal{H}^N(\mathbb{S}^{N-1})/2$. Therefore, Theorem  \ref{thm:AlexSN} gives that $Z^{N-1}$ is homeomorphic to $\mathbb{S}^{N-1}$. Thereofore, $Y^N=C(Z^{N-1})$ is homeomorphic to $\mathbb{R}^N$.
\smallskip

\textbf{Step 2}. $X^N$ is homeomorphic to $\mathbb{R}^N$.

Let $x_0\in X^N$. By Perelman's stability theorem, for any $\epsilon>0$, for every $R>0$ large enough, an open subset $U\subset X$, with $B_{(1-\epsilon)R}(x_0) \subset U \subset B_{(1+\epsilon)R}(x_0)$, is homeomorphic to $B_{R}(y_0)\subset Y$ which, in turn, is homeomorphic to $B_R\subset \mathbb{R}^N$, thanks to \textbf{Step 1}. 
At this point, arguing verbatim as in the proof of \cite[Theorem 3.5]{KapMon21} by Kapovitch and the second named author, it is possible to glue such local homeomorphisms into a global homeomorphism from $X^N$ to $\mathbb{R}^N$ by using Siebenmann’s deformation of homeomorphisms theory \cite{Sieb}.
\end{proof}

The constant $1/2$ in the AVR is sharp, as the aforementioned example of $C(\mathbb{RP}^2)$ shows. Note that  $C(\mathbb{RP}^2)\times\mathbb{R}^k$ gives an example in every dimension $n=3+k$.

 In case of a 2-dimensional Alexandrov space $X^2$ without boundary, with non-negative curvature and AVR $>0$, the classical Cohn-Vossen theorem implies that $X^2$ is homeomorphic to $\mathbb{R}^2$.

\subsection{Lower Ricci curvature bounds}
Throughout the section, we assume the reader is familiar with the notations and terminology of $\mathsf{RCD}(K,N)$ spaces, i.e. possibly non-smooth metric measure spaces with Ricci curvature bounded below by $K\in \mathbb{R}$ and dimension bounded above by $N\in [1,\infty]$ in a synthetic sense. Let us just recall that, thanks to the work of Cavalletti-Milman \cite{Cavalletti-Milman} (see also \cite{Li24} by Li) the conditions $\mathsf{RCD}(K,N)$ and  $\mathsf{RCD}^*(K,N)$ are equivalent, so we make no distinction between the two and adopt the notation $\mathsf{RCD}(K,N)$.

In connection with Theorem \ref{thm:MarTop} let us start by introducing a recent beautiful result due to Bru\`e-Pigati-Semola in \cite[Theorem 1.9]{BPS}. 
\begin{theorem}[Bru\`e-Pigati-Semola]
    Let $X^3$ be a non-collapsed $\mathsf{RCD}(0, 3)$ manifold with $\mathrm{AVR}>0$. Then $X^3$ is homeomorphic to $\mathbb{R}^3$. In particular any complete $3$-manifold with nonnegative Ricci curvature and $\mathrm{AVR}>0$ is diffeomorphic to $\mathbb{R}^3$.
\end{theorem}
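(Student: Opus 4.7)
The plan is to adapt the strategy used in the proof of Theorem \ref{thm:AlexRN} to the non-collapsed $\mathsf{RCD}(0,3)$ setting: first identify the tangent cone at infinity of $X^3$, then use a stability-type theorem to transfer its topology back to $X^3$, and finally upgrade homeomorphism to diffeomorphism in the smooth case. The first step is to pick any tangent cone at infinity $Y^3$ of $X^3$. Since $X^3$ is non-collapsed $\mathsf{RCD}(0,3)$ with $\mathrm{AVR}>0$, the volume-cone-implies-metric-cone theorem (De Philippis--Gigli in the $\mathsf{RCD}$ framework, extending Cheeger--Colding) gives $Y^3 = C(Z^2)$ for some non-collapsed $\mathsf{RCD}(1,2)$ cross-section $Z^2$. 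By the identification of $2$-dimensional non-collapsed $\mathsf{RCD}(K,2)$ spaces with Alexandrov surfaces of curvature $\geq K$ (due to Lytchak--Stadler), $Z^2$ is a closed Alexandrov surface of curvature $\geq 1$, hence homeomorphic to $\mathbb{S}^2$ or $\mathbb{RP}^2$.

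The crux of the argument is to rule out the possibility $Z^2 \cong \mathbb{RP}^2$. Note that $C(\mathbb{RP}^2)$ is itself a non-collapsed $\mathsf{RCD}(0,3)$ space with $\mathrm{AVR}=1/2$, so this case cannot be excluded merely from the $\mathsf{RCD}$ and volume-growth hypotheses; the assumption that $X^3$ is a topological $3$-manifold must be used essentially. Concretely, if $Z^2 \cong \mathbb{RP}^2$, then after rescaling, large annular regions of $X^3$ would be $\epsilon$-close in the Gromov--Hausdorff sense to annular regions of $C(\mathbb{RP}^2)$, and a non-collapsed $\mathsf{RCD}$ stability argument would transfer the nontrivial $\pi_1 = \mathbb{Z}/2$ to $X^3$ at every sufficiently large scale, contradicting that $X^3$ is a $3$-manifold with Euclidean volume growth. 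Hence $Z^2 \cong \mathbb{S}^2$ and $Y^3 \cong \mathbb{R}^3$.

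With this in hand, the proof proceeds in close analogy with Theorem \ref{thm:AlexRN}: the $\mathsf{RCD}$ version of Perelman's stability theorem (as developed in \cite{KapMon21}) provides, for every $\epsilon>0$ and every $R$ large enough, a homeomorphism from an open set $U \subset X^3$ with $B_{(1-\epsilon)R}(x_0) \subset U \subset B_{(1+\epsilon)R}(x_0)$ onto $B_R \subset \mathbb{R}^3$; patching these local homeomorphisms via Siebenmann's deformation of homeomorphisms theory \cite{Sieb} yields a global homeomorphism $X^3 \to \mathbb{R}^3$. For the ``in particular'' part, any smooth complete Riemannian $3$-manifold with $\mathrm{Ric}\geq 0$ and $\mathrm{AVR}>0$ is a non-collapsed $\mathsf{RCD}(0,3)$ space, hence homeomorphic to $\mathbb{R}^3$ by the preceding, and the homeomorphism is promoted to a diffeomorphism by the uniqueness of the smooth structure on $\mathbb{R}^3$ in dimension $3$ (Moise). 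The main obstacle is the second paragraph: excluding the $\mathbb{RP}^2$ cross-section is the place where the fine structure of non-collapsed Ricci-limit $3$-manifolds (as opposed to general Alexandrov $3$-spaces) must be exploited, and is precisely what distinguishes this theorem from its sectional-curvature counterpart Theorem \ref{thm:AlexRN}.
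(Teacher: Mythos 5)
The paper does not prove this theorem—it is quoted directly from Bru\`e--Pigati--Semola \cite[Theorem 1.9]{BPS} as a recent result, with no proof offered—so there is no in-paper argument to compare against. Your blind sketch captures a plausible architecture (tangent cone at infinity, exclusion of the $\mathbb{RP}^2$ cross-section, a stability argument), but it has two genuine gaps. The $\mathbb{RP}^2$ exclusion via $\pi_1$ does not work as stated: the cone $C(\mathbb{RP}^2)$ is contractible, hence simply connected, so there is no ``$\pi_1=\mathbb{Z}/2$'' to transfer from the cone itself. Only the annular regions of the cone have $\pi_1=\mathbb{Z}/2$, and merely finding such annuli at large scale in $X^3$ does not by itself contradict that $X^3$ is a manifold with Euclidean volume growth. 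The correct topological obstruction is that $\mathbb{RP}^2$ cannot bound a compact $3$-manifold (for a compact odd-dimensional $W$ one has $\chi(\partial W)=2\chi(W)$, whereas $\chi(\mathbb{RP}^2)=1$ is odd), but invoking this requires knowing that some embedded surface in $X^3$ is actually homeomorphic to $\mathbb{RP}^2$, not merely that an annulus has matching $\pi_1$—and that upgrade is exactly a stability-type theorem, which brings us to the second gap.

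Second, and more seriously, there is no ``$\mathsf{RCD}$ version of Perelman's stability theorem as developed in \cite{KapMon21}.'' Kapovitch--Mondino \cite{KapMon21} establishes topological-regularity and boundary results for $\mathsf{RCD}$ spaces, not a Perelman-type stability theorem; stability is a deep theorem for Alexandrov spaces (lower sectional curvature bound) and remains open in general for $\mathsf{RCD}$ spaces (lower Ricci bound). This is precisely why the paper's Theorem \ref{thm:RCDRN}, for $N\le 4$, only concludes that the tangent cones at infinity and at points are homeomorphic to $\mathbb{R}^N$ rather than $X^N$ itself, and why the case $N=4$ is then posed as an open question. The actual content of the cited theorem is the new low-dimensional topological machinery that Bru\`e--Pigati--Semola develop as a substitute for Perelman-type stability in dimension $3$; it cannot be bypassed with a one-line appeal to \cite{KapMon21} and Siebenmann \cite{Sieb}.
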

The following is a consequence of the Bishop-Gromov inequality, Bru\`e-Pigati-Semola manifold recognition theorem for $\RCD$ spaces, and Perelman's proof of the Poincar\'e conjecture.   

\begin{theorem}\label{thm:RCDSN}
Let $N=2,3$ and let $(X^N, \mathsf{d}, \mathcal{H}^N)$ be a non-collapsed $\mathsf{RCD}(N-1, N)$ space with $\mathcal{H}^N(X)> \mathrm{vol}(\mathbb{S}^N)/2$.
Then $X^N$ is homeomorphic to $\mathbb{S}^N$.
\end{theorem}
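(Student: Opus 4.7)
The plan is to combine the three ingredients highlighted in the statement. The first step is to apply the Bru\`e-Pigati-Semola manifold recognition theorem: since $N \in \{2,3\}$, the non-collapsed $\RCD(N-1, N)$ space $X^N$ is a closed topological $N$-manifold. By the synthetic Bonnet-Myers theorem on $\RCD(N-1, N)$ spaces, $\mathrm{diam}(X^N) \leq \pi$, so $X^N$ is compact. Moreover, since the universal cover of a compact non-collapsed $\RCD(N-1, N)$ space inherits the same non-collapsed $\RCD(N-1, N)$ structure and thus has diameter at most $\pi$ by Bonnet-Myers, the fundamental group $\pi_1(X^N)$ is necessarily finite.

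The second step is a covering argument. Let $\tilde{X}^N \to X^N$ denote the universal cover, with $k := |\pi_1(X^N)| < \infty$ sheets. Multiplicativity of the Hausdorff measure under Riemannian (or more generally, metric measure) coverings gives $\mathcal{H}^N(\tilde{X}^N) = k \cdot \mathcal{H}^N(X^N)$. Applying Bishop-Gromov to the non-collapsed $\RCD(N-1, N)$ space $\tilde{X}^N$ yields
\begin{equation*}
k \cdot \mathcal{H}^N(X^N) \;=\; \mathcal{H}^N(\tilde{X}^N) \;\leq\; \mathrm{vol}(\mathbb{S}^N).
\end{equation*}
Combined with the hypothesis $\mathcal{H}^N(X^N) > \mathrm{vol}(\mathbb{S}^N)/2$, this forces $k < 2$, so $k = 1$ and $X^N$ is simply connected.

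The third step invokes the low-dimensional topological classification. For $N = 2$, any simply connected closed topological $2$-manifold is homeomorphic to $\mathbb{S}^2$. For $N = 3$, every topological $3$-manifold is smoothable by Moise's theorem, and Perelman's proof of the Poincar\'e conjecture then gives that a simply connected closed $3$-manifold is homeomorphic to $\mathbb{S}^3$. This concludes the argument in both cases.

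The deepest input — and the place where the dimensional restriction $N \leq 3$ enters — is the first step: the identification of $X^N$ with a topological $N$-manifold via the Bru\`e-Pigati-Semola theorem. The strict inequality in the volume assumption is also essential, as it rules out precisely the two-sheeted covers; allowing equality would admit $\RCD$ quotients of $\mathbb{S}^N$ by an involution such as $\mathbb{RP}^N$, which saturate $\mathcal{H}^N = \mathrm{vol}(\mathbb{S}^N)/2$.
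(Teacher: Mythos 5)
Your Step~1 contains a genuine gap: it is \emph{not} true that every non-collapsed $\mathsf{RCD}(N-1,N)$ space with $N\leq 3$ is automatically a closed topological $N$-manifold, and the Bru\`e-Pigati-Semola recognition theorem cannot be applied without first verifying its hypothesis on tangent cones. A concrete counterexample in dimension $3$ is the spherical suspension of $\mathbb{RP}^2$: by Ketterer's results on cones/suspensions this is a compact non-collapsed $\mathsf{RCD}(2,3)$ space, but at the two poles the tangent cone is the cone over $\mathbb{RP}^2$, so the space is not a topological manifold. Note that this example has $\mathcal{H}^3 = \mathrm{vol}(\mathbb{S}^3)/2$, i.e.\ it sits exactly at the threshold of your volume hypothesis. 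This shows that the volume assumption is \emph{not} only needed to kill the fundamental group (as you argue in Step~2): it is essential already to establish the manifold structure, and your proof never uses it for that purpose.

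The paper's proof fills this gap. For $N=3$ one first examines a tangent cone $C(Y^2)$ at an arbitrary point, uses Ketterer's result to see that $Y^2$ is a non-collapsed $\mathsf{RCD}(1,2)$ space, and then invokes Bishop-Gromov monotonicity to transfer the volume bound $\mathcal{H}^3(X^3) > \mathrm{vol}(\mathbb{S}^3)/2$ into a cross-sectional bound $\mathcal{H}^2(Y^2) > \mathrm{vol}(\mathbb{S}^2)/2$. By the $2$-dimensional case of the theorem (proved separately via Lytchak-Stadler and the Grove-Petersen radius sphere theorem), $Y^2$ is homeomorphic to $\mathbb{S}^2$, so every tangent cone is homeomorphic to $\mathbb{R}^3$, and only then does Bru\`e-Pigati-Semola Theorem~1.8 apply to give the topological $3$-manifold structure. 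Your Steps~2 and~3 (the finite-cover volume argument, with Wang's result supplying semi-local simple connectedness, and then Perelman for $N=3$) are essentially identical to the paper's and are fine; the flaw is isolated in Step~1, where the volume hypothesis must do more work than you credit it with.
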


\begin{proof}

\textbf{2-dimensional case}.  By Lytchack-Stadler \cite{LytStad}, $X^2$ is an Alexandrov space with $\mathrm{curv}\geq 1$. The claim then follows from Theorem \ref{thm:AlexSN}.
\smallskip

\textbf{3-dimensional case}
By Perelman's proof of the Poincar\'e conjecture, it  is enough to show that $X^3$ is homeomorphic to a simply connected topological 3-manifold without boundary.
\smallskip

\textbf{Step 1}. $X^3$ is homeomorphic to a topological 3-manifold.

For every $x\in X^3$, every tangent cone at $x$ is a metric cone $C(Y^2)$. By a result of Ketterer in \cite{Ketterer},  $Y^2$ is a non-collapsed $\mathsf{RCD}(1,2)$ space. The Bishop-Gromov volume monotonicity implies that $\mathcal{H}^2(Y^2)>\mathrm{vol}(\mathbb{S}^2)/2$. Thus, from the 2-dimensional case discussed above, it follows that $Y^2$ is homeomorphic to $\mathbb{S}^2$. Thus, we just proved that every tangent cone to every point of $X^3$ is homeomorphic to the cone over $\mathbb{S}^2$. The recent result proved in \cite[Theorem 1.8]{BPS} gives that $X^3$ is homeomorphic to a topological 3-manifold.
\smallskip

\textbf{Step 2}. $X^3$ is simply connected.

By a result of Wang in \cite{W}, $\mathsf{RCD}(K,N)$ spaces are semi-locally simply connected, thus the fundamental group coincides with the revised fundamental group. If $X^3$ is not simply connected, by the work of Wei and the second named author \cite{MW19}, we know that the universal cover $\tilde{X}^3$ of $X^3$ is an $\mathsf{RCD}(2,3)$ space, and it must hold $$\mathcal{H}^3(\tilde{X}^3)\geq 2 \mathcal{H}^3(X^3)> \mathrm{vol}(\mathbb{S}^3),$$
contradicting the Bishop-Gromov monotonicity. Thus $X^3$ is simply connected.
\smallskip

\textbf{Step 3}. Conclusion.
\\From \textbf{Steps 2} and \textbf{3} we know that $X^3$ is homeomorphic to a simply connected  $3$-dimensional topological manifold.  Perelman's proof of the Poincar\'e conjecture gives that $X^3$ is homeomorphic to $\mathbb{S}^3$.
\end{proof}

\begin{theorem}\label{thm:RCDRN}
Let $(X^N, \mathsf{d}, \mathcal{H}^N)$ be a non-collapsed $\mathsf{RCD}(0,N)$ space, for some $N\leq 4$, with $\mathrm{AVR}> 1/2$.
Then every tangent cone at infinity, and every tangent cone at any point,  are homeomorphic to $\mathbb{R}^N$.
\end{theorem}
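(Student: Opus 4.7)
The plan is to reduce the statement to Theorem \ref{thm:RCDSN} via the metric-cone structure of tangent cones in non-collapsed $\mathsf{RCD}$ spaces. Fix either a point $x\in X^N$ and a tangent cone $T$ at $x$, or a tangent cone $T$ at infinity. As a first step I would invoke the non-collapsed $\mathsf{RCD}$ analogue of Cheeger--Colding's ``almost volume cone implies almost metric cone'' theorem (for the tangent cone at infinity, relying on $\mathrm{AVR}>0$) and De Philippis--Gigli's volume-cone-to-metric-cone theorem (for tangent cones at interior points) to conclude that $T$ is isometric to a metric cone $C(Z^{N-1})$. By Ketterer's characterization of $\mathsf{RCD}$ cones, the cross-section $Z^{N-1}$ is itself a non-collapsed $\mathsf{RCD}(N-2,N-1)$ space.

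Next, the Bishop--Gromov monotonicity in $\mathsf{RCD}(0,N)$ gives that the $N$-dimensional volume density satisfies $D_N(x)\geq \mathrm{AVR}(X^N)>1/2$ at every $x\in X^N$, and that the tangent cone at infinity inherits the same $\mathrm{AVR}$ as $X^N$. Since the density at the tip of $C(Z^{N-1})$ equals $\mathcal{H}^{N-1}(Z^{N-1})/\mathrm{vol}(\mathbb{S}^{N-1})$, this translates into the sharp cross-section bound
\begin{equation}
\mathcal{H}^{N-1}(Z^{N-1})>\frac{1}{2}\mathrm{vol}(\mathbb{S}^{N-1}).
\end{equation}
Because $N\leq 4$, the cross-section dimension satisfies $N-1\leq 3$, so for $N\in\{3,4\}$ Theorem \ref{thm:RCDSN} applies to $Z^{N-1}$ and concludes that $Z^{N-1}$ is homeomorphic to $\mathbb{S}^{N-1}$. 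The remaining low-dimensional cases are handled directly: for $N=2$, by Lytchak--Stadler $X^2$ is an Alexandrov space with $\mathrm{curv}\geq 0$, whose tangent cones are cones over circles and hence already homeomorphic to $\mathbb{R}^2$; the case $N=1$ is trivial.

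Finally, since the topological cone $C(\mathbb{S}^{N-1})$ is homeomorphic to $\mathbb{R}^N$ via the radial coordinate map, the homeomorphism $Z^{N-1}\cong \mathbb{S}^{N-1}$ lifts canonically to $C(Z^{N-1})\cong \mathbb{R}^N$, completing the proof. The bulk of the difficulty is packaged inside Theorem \ref{thm:RCDSN}, whose proof in turn rests on Bru\`e--Pigati--Semola's manifold recognition theorem and Perelman's geometrization. The one place where I would need to be careful is verifying that the tangent cone $T$ is itself a non-collapsed $\mathsf{RCD}(0,N)$ space so that Ketterer's cone characterization and the cone volume-density formula apply cleanly to the cross-section; this follows from the stability of the non-collapsed condition under pointed Gromov--Hausdorff convergence, which is by now standard.
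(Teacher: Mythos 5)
Your argument is correct and follows essentially the same route as the paper: reduce to the metric cone structure via De Philippis--Gigli and Ketterer, push the $\mathrm{AVR}>1/2$ bound through Bishop--Gromov to get the sharp cross-section volume bound $\mathcal{H}^{N-1}(Z^{N-1})>\frac{1}{2}\mathrm{vol}(\mathbb{S}^{N-1})$, and then invoke Theorem \ref{thm:RCDSN}. The paper only spells out $N=4$ and states that the other cases are analogous (and easier), so your extra care with the low-dimensional cases $N=1,2$ is a harmless elaboration, not a deviation.
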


\begin{proof}
    Let us discuss only the case $N=4$, the others being analogous (actually easier).

Let $Y^4$ be any tangent cone at infinity. Then $Y^4$ is an $\mathsf{RCD}(0,4)$ space  with $\mathrm{AVR}> 1/2$.  It is easily seen that $Y^4$ achieves equality in the Bishop-Gromov monotonicity and thus, by De Philippis-Gigli \cite{DePhGigGAFA} and \cite{Ketterer} by Ketterer, $Y^4=C(Z^3)$ is the  metric cone over a non-collapsed $\mathsf{RCD}(2, 3)$ space $Z^3$. %By \cite{Ketterer}, $Z^3$ is a non-collapsed $\mathsf{RCD}(2,3)$ space. 
By the Bishop-Gromov monotonicity, it is easily seen that $\mathcal{H}^3(Z^3)> \mathrm{vol}(\mathbb{S}^3)/2$. Recalling Theorem \ref{thm:RCDSN}, we infer that $Z^3$ is homeomorphic to $\mathbb{S}^3$. It follows that $Y^4=C(Z^3)$ is homeomorphic to $\mathbb{R}^4$, as claimed. 

Similarly we obtain the corresponding result for tangent cones at any point.
\end{proof}

\section{Open problems}
\subsection{Related to the Poincar\'e inequality}
Firstly we provide the following, related to Conjecture \ref{specgap}.
\begin{conjecture}
Let 
\begin{equation}\label{conv}
M^n_i \stackrel{\mathrm{GH}}{\to} X^n
\end{equation}
be a non-collapsed sequence of closed Riemannian manifolds $M^n_i$ with $\mathrm{Ric} \ge -(n-1)$, converging in GH-sense to a compact Ricci limit space $X^n$. Then
\begin{enumerate}
\item $b_1(M^n_i)=b_{H, 1}(X^n)$ for any sufficiently large $i$, where $b_{H, 1}(X^n)$ denotes the dimension of the space of harmonic $1$-forms in the sense of \cite{Gigli} by Gigli;
\item $b_{H,1}(X^n)=b_1(X^n)$.
\end{enumerate}
\end{conjecture}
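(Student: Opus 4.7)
The plan is to deduce both items from three inputs: (a) Gigli's Hodge/de Rham theory for $\RCD$ spaces, (b) spectral stability for the Hodge Laplacian on $1$-forms under non-collapsed mGH convergence, and (c) the Sormani--Wei-type stability of the first Betti number under lower-Ricci non-collapsed limits.

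\textbf{Step 1 (framework on the limit).} The sequence $M_i^n$ sits in the $\RCD(-(n-1),n)$ class, and non-collapsedness passes to $X^n$. On $X^n$, Gigli's exterior calculus provides a Hilbert complex of $L^2$ $k$-forms and a Hodge Laplacian $\Delta_{H,1}$ whose kernel is finite-dimensional and, by definition, of dimension $b_{H,1}(X^n)$.

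\textbf{Step 2 (spectral stability).} The goal is to show that the spectrum of $\Delta_{H,1}$ is continuous along $M_i^n \to X^n$, in particular that the multiplicity of $0$ cannot gain in the limit. Since on smooth $M_i^n$ one has $\dim \ker \Delta_{H,1}(M_i^n) = b_1(M_i^n)$, upper semi-continuity of the kernel would yield $\limsup_i b_1(M_i^n) \leq b_{H,1}(X^n)$. Conversely, the Sormani--Wei result on lower-Ricci non-collapsed limits should give $\liminf_i b_1(M_i^n) \geq b_1(X^n)$, in the same spirit as in the proof of Theorem \ref{eigenmain}.

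\textbf{Step 3 (de Rham identification).} This is the heart of (2): one must show $b_{H,1}(X^n) = b_1(X^n)$. I would exploit that the regular set $\mathcal{R}(X^n)$ is open, dense, of codimension $\geq 2$ (using Cheeger--Colding/Cheeger--Naber/De Philippis--Gigli) and carries a genuine Riemannian structure, so $H^1$ of $X^n$ agrees with $H^1$ of $\mathcal{R}(X^n)$, where Gigli's $1$-forms reduce to classical smooth ones. One then runs the classical de Rham argument: the period map $[\omega]\mapsto \left(\gamma\mapsto \int_\gamma \omega\right)$ pairs harmonic $L^2$ forms with $H^1(X^n;\mathbb{R})$ injectively (an exact $L^2$ form orthogonal to exact forms is zero), while Gigli's Hodge decomposition provides surjectivity by solving $\Delta_{H,1}\omega = \delta \beta$ inside any prescribed class. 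Chaining these three steps produces
\[
b_{H,1}(X^n) \;\geq\; \limsup_i b_1(M_i^n) \;\geq\; \liminf_i b_1(M_i^n) \;\geq\; b_1(X^n) \;=\; b_{H,1}(X^n),
\]
forcing equality throughout, which closes both (1) and (2).

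\textbf{Main obstacle.} Step 2 is the true crux: under a mere lower Ricci bound with non-collapsing, eigenvalues of $\Delta_{H,1}$ could accumulate to $0$ and create spurious kernel elements in the limit. This is exactly the phenomenon excluded by Conjecture \ref{specgap}; without a uniform bound on $C_{P,1}$, spectral stability of the Hodge Laplacian on $1$-forms is precisely what must be established, so the problem is genuinely coupled with the conjecture itself. A secondary difficulty is Step 3 beyond the orbifold regime: at singular points of a non-collapsed $\RCD$ limit the local topology may be non-trivial, and turning Gigli's analytic $H^1$ into the topological one will likely require a refined $\epsilon$-regularity theorem for $L^2$ harmonic $1$-forms on non-collapsed Ricci limits, together with a careful analysis of the homotopy type of neighbourhoods of the singular set, perhaps along the lines of recent work of Bru\`e--Pigati--Semola in dimension $3$ and of Cheeger--Jiang--Naber on the structure of the singular set.
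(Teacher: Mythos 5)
The statement you are addressing is a \emph{conjecture}; the paper does not prove it, and what it actually establishes is the proposition immediately following it, namely that $b_1(M^n_i)\le b_{H,1}(X^n)$ for all sufficiently large $i$. This is exactly the inequality in your Step~2, but your analysis of that step is off in two places. First, you attribute its difficulty to the possibility that eigenvalues of $\Delta_{H,1}(M_i^n)$ accumulate at zero and ``create spurious kernel elements''; but that phenomenon would only \emph{enlarge} $\ker\Delta_{H,1}$ on the limit, so it makes the inequality $\limsup_i b_1(M_i^n)\le b_{H,1}(X^n)$ easier, not harder---it is the reverse inequality that eigenvalue accumulation would obstruct. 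The genuine subtlety in this direction is different: one must ensure that the $L^2$-strong limit $\omega$ of unit-norm harmonic $1$-forms $\omega_i$ lands not merely in $W^{1,2}_H(T^*X^n)$ with $d\omega=\delta\omega=0$, but in the Hodge Sobolev class $H^{1,2}_H(T^*X^n)$ on which Gigli's Hodge theory counts $b_{H,1}$. The paper handles precisely this point: it cites \cite{Honda} for the $L^2$-strong convergence, then runs a three-step argument that writes $\omega=df$ locally on the $\epsilon$-regular set via the intrinsic Reifenberg method, cuts off away from the singular stratum $\mathcal{S}^{n-2}_\epsilon$, and controls the Dirichlet energy of the cut-offs via the Cheeger--Jiang--Naber estimate $\mathcal{H}^n(B_s(\mathcal{S}^{n-2}_\epsilon))\le C(n,\epsilon,\mathrm{diam}(X^n))\,s^2$, passing to the limit with Mazur's lemma. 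No spectral gap on $1$-forms is invoked, and Conjecture~\ref{specgap} is not a hypothesis; indeed the paper explicitly states the implication runs the other way (the present conjecture would \emph{imply} Conjecture~\ref{specgap}).

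Second, and consequently, your assessment of where the true difficulty lies is reversed. Since $b_1(M_i^n)\le b_{H,1}(X^n)$ is proved unconditionally, and $b_1(M_i^n)=b_1(X^n)$ for large $i$ is supplied by the Sormani--Wei/Pan--Wei theory, the single genuinely open ingredient in your chain is the last link $b_1(X^n)=b_{H,1}(X^n)$---more precisely the inequality $b_{H,1}(X^n)\le b_1(X^n)$, the other direction being forced by the above. Your de~Rham/period-map sketch in Step~3 is aimed at the right target, but, as you acknowledge, turning Gigli's analytic $H^1$ into a topological invariant across the singular set of a non-collapsed Ricci limit is exactly where the problem currently stands open; the injectivity of the period map on $\ker\Delta_{H,1}$ is not ``the classical argument'' in this setting. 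I would re-label Step~3 as the crux, drop the claim that Step~2 is coupled to Conjecture~\ref{specgap}, and replace your heuristic ``spectral stability'' discussion with the convergence-of-harmonic-forms argument the paper actually uses.
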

Note that if the conjecture above is valid, then Conjecture \ref{specgap} is too, because of the same arguments as in the proof of Theorem \ref{gap4} (together with a result of Pan-Wei in \cite{PW}).
We provide a progress along this direction, yielding that the only remaining step is to prove $b_{H,1}(X^n) \le b_1(X^n)$.
\begin{proposition}
Under the same setting  (\ref{conv}) as in the conjecture above, it holds
\begin{equation}
b_1(M^n_i) \le b_{H,1}(X^n),\quad \text{for any sufficiently large $i$.}
\end{equation} 
In particular if $b_{H, 1}(X^n) \le b_1(X^n)$, then $b_{H,1}(X^n)=b_1(X^n)$.
\end{proposition}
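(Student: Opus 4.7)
The plan is to combine Hodge theory on smooth manifolds with Gigli's Hodge theory on $\RCD$ spaces \cite{Gigli}, together with the spectral convergence for the Hodge Laplacian on $1$-forms established by the first named author in \cite{Hondas}, and then to match the result with Pan-Wei's stability \cite{PW} of the first Betti number along non-collapsed Ricci limits.

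First I would identify the relevant quantities with kernel dimensions of Hodge Laplacians. By classical Hodge theory on closed Riemannian manifolds, $b_1(M_i^n) = \dim \ker \Delta_{H,1}(M_i^n)$. By Gigli's Hodge theory \cite{Gigli}, $b_{H,1}(X^n) = \dim \ker \Delta_{H,1}(X^n)$, where on the limit space $X^n$ the operator $\Delta_{H,1}$ is understood in the $\RCD$ sense on $L^2$ one-forms. I would then invoke the spectral convergence of $\Delta_{H,1}$ along our non-collapsed Ricci limit sequence \cite{Hondas}: listing the eigenvalues of $\Delta_{H,1}$ with multiplicity in non-decreasing order, the $j$-th eigenvalue on $M_i^n$ converges to the $j$-th eigenvalue on $X^n$ as $i \to \infty$, for every fixed $j$.

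Setting $b := b_{H,1}(X^n)$, by the kernel identification above the $(b+1)$-th eigenvalue of $\Delta_{H,1}$ on $X^n$ is strictly positive. Spectral convergence then forces the $(b+1)$-th eigenvalue of $\Delta_{H,1}$ on $M_i^n$ to be strictly positive for all sufficiently large $i$; equivalently, $\dim \ker \Delta_{H,1}(M_i^n) \le b$, which by the identification gives $b_1(M_i^n) \le b_{H,1}(X^n)$.

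For the ``in particular'' statement, I would use that under non-collapsed $\mathrm{GH}$-convergence with uniform lower Ricci bound, $b_1(M_i^n) = b_1(X^n)$ for all sufficiently large $i$, as established by Pan-Wei \cite{PW} (see also \cite{SormaniWei}). Combining this equality with the already-proven inequality yields $b_1(X^n) \le b_{H,1}(X^n)$, which together with the hypothesis $b_{H,1}(X^n) \le b_1(X^n)$ forces $b_{H,1}(X^n) = b_1(X^n)$. The only non-trivial input is the spectral convergence of the Hodge Laplacian on $1$-forms from \cite{Hondas}; once this is granted, the argument is the standard principle that kernels of self-adjoint operators can only grow in the limit under spectral convergence, and the remaining ingredients are cited directly.
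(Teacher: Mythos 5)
Your argument relies on spectral convergence of the Hodge Laplacian on $1$-forms from \cite{Hondas}, but that reference establishes such convergence only under \emph{two-sided} Ricci bounds $|\mathrm{Ric}|\le K$ (indeed the title is ``Spectral convergence under bounded Ricci curvature''). The setting of the present proposition is the setting of the conjecture, namely only a lower bound $\mathrm{Ric}\ge -(n-1)$; there the spectral convergence of $\Delta_{H,1}$ is \emph{not} available, and it is not even immediate that the Hodge Laplacian on $1$-forms over $X^n$ has discrete spectrum so that the ``$(b+1)$-th eigenvalue is strictly positive'' step makes sense. So the key citation does not apply, and this is a genuine gap rather than a cosmetic one: establishing the conclusion in the lower-Ricci-bound regime is precisely what requires new work.

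The paper's actual proof avoids spectral convergence altogether. It takes harmonic $1$-forms $\omega_i$ on $M_i^n$ of unit $L^2$-norm, uses the $L^2$-strong compactness and regularity theory on compact Ricci limit spaces from \cite{Honda} (the 2018 memoir, valid under lower Ricci bounds only) to extract an $L^2$-strong limit $\omega\in W^{1,2}_H(T^*X^n)\cap L^\infty$ with $d\omega=0$, $\delta\omega=0$. The delicate point --- and what your plan silently skips --- is that Gigli's harmonic $1$-forms must lie in the smaller Hodge Sobolev space $H^{1,2}_H(T^*X^n)$, i.e.\ in the closure of nice forms, not merely in $W^{1,2}_H$. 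The paper proves $\omega\in H^{1,2}_H$ via a local Poincar\'e-lemma-type representation $\omega=df$ near $\epsilon$-regular points (following \cite{HKMPR}), a partition-of-unity argument away from the singular stratum, and a cutoff estimate near $\mathcal{S}^{n-2}_\epsilon$ using the volume bound $\mathcal{H}^n(B_s(\mathcal{S}^{n-2}_\epsilon))\lesssim s^2$ from Cheeger--Jiang--Naber, followed by Mazur's lemma. That chain of arguments is the substance of the proof and is missing from your proposal. Your treatment of the ``in particular'' clause (Sormani--Wei/Pan--Wei Betti number stability plus the main inequality) is fine.
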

\begin{proof}
Let us take a sequence of harmonic $1$-forms $\omega_i$ on $M_i^n$ with $\|\omega_i\|_{L^2}=1$. Thanks to a result of the first named author in \cite{Honda}, since $(\omega_i)_i$ has an $L^2$-strong convergent subsequence, with no loss of generality we can assume that $(\omega_i)_i$ is $L^2$-strongly converging to some $\omega \in L^2(T^*X^n)$. It follows from \cite{Honda} that $\omega \in W^{1,2}_H(T^*X^n) \cap L^{\infty}(T^*X^n)$ holds with $\di \omega=0$ and $\delta \omega=0$. Therefore in order to conclude, it is enough to check $\omega \in H^{1,2}_H(T^*X^n)$.

To this end, we use ideas appeared in a paper \cite{HKMPR} by Ketterer-Mondello-Perales-Rigoni and the first named author. We assume the readers are familiar with the standard notations about this topic, including the definitions of $\epsilon$-regular set $\mathcal{R}^n_{\epsilon}$ and $(\epsilon,k)$-singular stratum $ \mathcal{S}^k_{\epsilon}$. 

\textbf{Step 1.} There exists $\epsilon_n>0$ such that for any $x \in \mathcal{R}^n_{\epsilon_n}$ there exist $r>0$ and $f \in D(\Delta, B_r(x))$ such that $\omega=df$ on $B_r(x)$.

This is done by following the proof of \cite[Theorem 3.1]{HKMPR} based on the intrinsic Reifenberg method with the Poincar\'e lemma on $M_i^n$.

\textbf{Step 2.} For any Lipschitz function $\phi$ on $X^n$ with $\supp \phi \subset X^n \setminus \mathcal{S}^{n-2}_{\epsilon_n}$, we have $\phi \omega \in H^{1,2}_H(T^*X^n)$.

This is a direct consequence of \textbf{Step 1} and  a partition of unity argument: indeed,  writing $\omega=df$ on a small ball $B_r(x)$, we easily get that $\rho df \in H^{1,2}_H(T^*X^n)$, for any $\rho \in \mathrm{Lip}_c(B_r(x))$.

\textbf{Step 3.} Conclusion.

For any small $r>0$, find a cut-off Lipschitz function $\phi_r$ with $\supp (\phi_r) \subset X^n \setminus B_r(\mathcal{S}_{\epsilon}^{n-2})$, $\phi_r =1$ on $X^n \setminus B_{2r}(\mathcal{S}_{\epsilon}^{n-2})$ and $|\nabla \phi_r| \le \frac{2}{r}$. Recalling a result proved in \cite{CheegerJiangNaber} by Cheeger-Jiang-Naber:
\begin{equation}\label{eq:hausestn-2}
    \mathcal{H}^n(B_s(\mathcal{S}^{n-2}_{\epsilon})) \le C(n, \epsilon, \mathrm{diam}(X^n))s^2,\quad \forall s>0, 
\end{equation}
we have
\begin{equation*}
    \int_{X^n}|\nabla \phi_r|^2\di \mathcal{H}^n \le C(n, \epsilon, \mathrm{diam}(X^n)).
\end{equation*}
After choosing $\epsilon=\epsilon_n$ as in \textbf{Step 1}, letting $r \to 0$ for $ \phi_r \omega \in H^{1,2}_H(T^*X^n)$ with Mazur's lemma allows  to conclude that $1_{X^n \setminus \mathcal{S}^{n-2}_{\epsilon}}\omega \in H^{1, 2}_H(T^*X^n)$. Since $\mathcal{S}^{n-2}_{\epsilon}$ is $\haus^n$-negligible (for example recall (\ref{eq:hausestn-2})), we conclude. 
%Recalling the locality for the differential operators $d$ and $\delta$ (see \cite{Gigli}) with \textbf{Step 2}, we know 
%\begin{equation}
    %\|1_{X^n \setminus \mathcal{S}^{n-2}_{\epsilon}}\omega\|_{H^{1,2}_H} \le \|\omega\|_{W^{1,2}_H}.
%\end{equation}
%In particular $\{1_{X^n \setminus \mathcal{S}^{n-2}_{\epsilon}}\omega\}_{\epsilon>0}$ is a bounded sequence in $H^{1,2}_H(T^*X^n)$ as $\epsilon \to 0$.
%Thus letting $\epsilon \to 0$ shows $\omega \in H^{1, 2}_H(T^*X^n)$.
\end{proof}
%The following is also a natural question.
\begin{question}
What is the sharp upper bound on $C_{P, 1}$ under $\mathrm{Ric} \ge K, \mathrm{diam} \le d$ and $\mathrm{vol} \ge v$?
\end{question}
\begin{question}\label{sharpconj}
    Prove Conjecture \ref{specgap} is sharp in the sense that there exists a sequence $M_i^n$ with $\mathrm{Ric}\ge -(n-1), \mathrm{diam} \le d$ and $\mathrm{vol}\ge v$ such that $C_{P, k}(M_i^n) \to \infty$ for some $k \ge 2$.
\end{question}
%{\color{brown}Note that {\color{blue}Question} \ref{sharpconj} seems to be unknown even for Einstein $4$-manifolds with $k=2$.}
%The final question is related to Question \ref{andconst}.
\begin{conjecture}\label{section}
    If 
    \begin{equation}\label{secpoinc}
        \mathrm{sec}\ge -1, \quad \mathrm{diam}\le d,\quad \mathrm{vol}\ge v,
    \end{equation}
    then for any $k \ge 1$
    \begin{equation}
        C_{P, k}\le C(n, d, k, v). 
    \end{equation}
\end{conjecture}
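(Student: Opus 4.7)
The plan is to adapt the contradiction strategy of Colbois-Courtois (Theorem \ref{cc}), replacing Peters' $C^{1,\alpha}$-precompactness by Perelman's stability theorem for Alexandrov spaces. Assume by contradiction that the conclusion fails for some $k \ge 1$, so there is a sequence of closed manifolds $M_i^n$ satisfying \eqref{secpoinc} with $\mu_k(M_i^n) \to 0$. By Gromov's precompactness together with the volume lower bound, a subsequence converges in GH-sense to a non-collapsed $n$-dimensional Alexandrov space $X^n$ with $\mathrm{curv}\ge -1$, $\mathrm{diam}\le d$, and $\mathcal{H}^n(X^n)\ge v$.

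By Perelman's stability theorem, $M_i^n$ is homeomorphic to $X^n$ for all sufficiently large $i$, and in particular $b_k(M_i^n) = b_k(X^n)$ for such $i$. On the analytical side, $(X^n, \mathsf{d}, \mathcal{H}^n)$ is a non-collapsed $\mathsf{RCD}(-(n-1), n)$ space (by Petrunin in the non-negative case and by Zhang-Zhu in general), so Gigli's Hodge theory yields a Hodge Laplacian $\Delta_{H, k}$ together with harmonic Betti numbers $b_{H, k}(X^n)$. Once the spectral convergence for $\Delta_{H, k}$ is established along this non-collapsed Alexandrov sequence, in the spirit of the first named author's work for Ricci limits, the hypothesis $\mu_k(M_i^n) \to 0$ forces a jump of the kernel in the limit, namely
\begin{equation*}
\limsup_{i \to \infty} b_k(M_i^n) < b_{H, k}(X^n).
\end{equation*}
Combining this with the topological identity $b_k(M_i^n) = b_k(X^n)$ and with a Hodge-type matching $b_{H, k}(X^n) = b_k(X^n)$ on the limit would give the desired contradiction.

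The two main obstacles are therefore the spectral convergence for $\Delta_{H, k}$ along non-collapsed sequences of Alexandrov spaces, and the Hodge theorem $b_{H, k}(X^n) = b_k(X^n)$ on the limit. The Alexandrov setting is substantially more favourable than the general Ricci limit case discussed in Section 5: Perelman's stability pins down the topology of $X^n$ as that of the approximants $M_i^n$, and the singular strata of $X^n$ have codimension at least $2$ with quantitative volume bounds. These features should allow an argument based on a partition-of-unity and the Poincar\'e lemma on $M_i^n$, together with cut-offs localized near $\mathcal{S}^{n-2}_\epsilon$ as in \cite{HKMPR}, to represent limit $L^2$ harmonic forms as smooth closed and co-closed forms globally on $X^n$, matching the topological count. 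Handling the higher codimension strata, and in particular ensuring that the stability homeomorphisms can be promoted to a sufficiently regular identification of de Rham complexes, is expected to be the most delicate technical point.
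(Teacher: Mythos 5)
The statement you are asked to prove is a \emph{conjecture} in the paper; no unconditional proof is given, so no blind proof attempt can succeed without resolving genuinely open problems. The paper's only argument is a conditional reduction: assuming Perelman's bi-Lipschitz stability conjecture for smooth approximating sequences, the manifolds $M_i^n$ are uniformly bi-Lipschitz equivalent to the Alexandrov limit $X^n$, and Lott's eigenvalue comparison \cite[Lemma 4.2]{L} then gives $\liminf_i \mu_k(M_i^n)>0$, contradicting $\mu_k(M_i^n)\to 0$. Crucially, that route never needs any Hodge theory or spectral convergence theorem on the singular limit: the bi-Lipschitz maps transport eigenvalue bounds directly, which is exactly why the paper reduces to bi-Lipschitz stability rather than topological stability.

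Your proposal is a genuinely different route and correctly sets up the contradiction framework, but the two steps you flag as ``obstacles'' are not technical gaps to be smoothed over; they are themselves open problems that the paper explicitly isolates in Section~5. Spectral convergence for $\Delta_{H,k}$ ($k\ge 2$) along non-collapsed Gromov--Hausdorff sequences is not established even in the Ricci-limit case (the first author's results cover functions and, with restrictions, $1$-forms), and the Hodge-type identity $b_{H,k}(X^n)=b_k(X^n)$ on the Alexandrov (hence $\mathsf{RCD}$) limit is not known: for $k=1$ it is precisely Conjecture 5.1 of the paper, and for higher $k$ even the formulation via Gigli's calculus is delicate. Using Perelman's \emph{topological} stability theorem (a theorem, correctly cited) to get $b_k(M_i^n)=b_k(X^n)$ is fine, but it does not give the metric/analytic identification needed to compare eigenvalues or to promote the homeomorphism to a map of de~Rham complexes; that upgrade is exactly what the conjectural bi-Lipschitz version would supply. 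So while your outline is a plausible program, it does not constitute a proof, and it should be presented alongside the paper's own conditional proposition, which shows that the bi-Lipschitz stability conjecture plus Lott's lemma would close the argument without ever touching Hodge theory on the singular space.
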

%Note that Conjecture \ref{section} is valid if $n=2$ because of Theorem \ref{dim2poincare} and Proposition \ref{orient}.

In connection with Conjecture \ref{section}, let us recall a renowned conjecture by Perelman.
\begin{conjecture}[Perelman's bi-Lipschitz stability conjecture]\label{perelman}
    Let
    \begin{equation}
        X_i^n \stackrel{\mathrm{GH}}{\to}X^n
    \end{equation}
    be a non-collapsed GH-convergent sequence of compact Alexandrov spaces\footnote{Note that, since $X^n$ is also assumed to be compact, then the diameters of $X_i^n$ are uniformly bounded.} of dimension $n$ with $\mathrm{curv}\ge -1$.  Then there exists $C>1$ such that $X_i^n$ is $C$-bi-Lipschitz equivalent to $X^n$ for any sufficiently large $i$.
\end{conjecture}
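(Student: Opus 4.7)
The plan is to promote the topological homeomorphisms provided by Perelman's stability theorem to bi-Lipschitz maps with a distortion constant depending only on $n$ and the geometry of the limit $X^n$. First I would fix a small parameter $\delta>0$ and decompose $X^n$ into the $\delta$-strained regular part $X^n_{\mathrm{reg},\delta}$ together with a neighborhood of the extremal subsets and lower-dimensional singular strata. On $X^n_{\mathrm{reg},\delta}$, the classical construction of $(1+\eta(\delta))$-bi-Lipschitz distance-coordinate charts yields a direct comparison between $X_i^n$ and $X^n$: fixing strainer configurations in $X^n$ and lifting them to $X_i^n$ via GH approximations produces local bi-Lipschitz charts whose distortion tends to $1$ as $i\to\infty$ uniformly on compact subsets of the regular part, giving the desired bi-Lipschitz control away from the singular set.

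The second step is an induction on dimension to handle a neighborhood of the singular set. Given $x\in X^n$ and a tangent cone $C(L^{n-1})$ at $x$, the link $L^{n-1}$ is an Alexandrov space with $\mathrm{curv}\geq 1$ of dimension $n-1$, and rescaling the convergence $X_i^n\to X^n$ near $x$ produces a non-collapsed convergence of small balls whose blow-down is the ball in $C(L^{n-1})$. A diagonal extraction combined with the inductive hypothesis applied to $L^{n-1}$ should give uniformly bi-Lipschitz comparisons between the link of $x$ in $X^n$ and the links of nearby points in $X_i^n$, which then cone off to bi-Lipschitz comparisons between small annular neighborhoods. Matching the inductive scales against the strainer scale $\delta$ via Cheeger-type quantitative stratification estimates would localize the problem to finitely many conical neighborhoods whose inner radii are bounded below uniformly in $i$.

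The main obstacle is the globalization: gluing these local bi-Lipschitz comparisons into a single homeomorphism $\Phi_i:X^n\to X_i^n$ with distortion bounded independently of $i$. Perelman's original argument and the Siebenmann deformation theory employed in the proof of Theorem \ref{thm:AlexRN} yield global homeomorphisms but do not control Lipschitz constants, since the transition maps between overlapping charts may introduce large derivatives on thin shells near the nested singular strata. A quantitative gluing procedure --- perhaps in the spirit of a controlled center-of-mass construction in Alexandrov spaces, or a scale-equivariant refinement of Siebenmann's deformation --- would be needed, in which the transition maps themselves carry a scale-invariant bi-Lipschitz bound compatible with the inductive estimates from the second step. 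This simultaneous gluing across all strata is precisely what has kept the conjecture open, and I expect it to be the genuinely hard part of any proof.
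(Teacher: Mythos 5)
This statement is a \emph{conjecture}, not a theorem: the paper does not prove it, and neither do you. Your write-up correctly identifies it as open and honestly flags the globalization/gluing step as the genuine obstruction, so there is no hidden error to point out --- but there is also nothing here that constitutes a proof, nor a proof in the paper to compare against.

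What the paper \emph{does} prove is only the very special case $n=2$ with each $X_i^2$ smooth (Theorem~\ref{perelman2}), and the method there is quite different from, and more elementary than, your outline. Rather than stratifying by strainer scale and inducting on the dimension of links, the paper invokes Burago's bi-Lipschitz criterion for Alexandrov surfaces \cite{B}: two surfaces with curvature measures $\omega = \omega^+ - \omega^-$ are uniformly bi-Lipschitz provided every sufficiently short simple closed curve bounds a disc $D_\gamma$ with $\omega^+(D_\gamma) \le 2\pi - \epsilon$. The technical work (Lemma~\ref{1}) is then a compactness-and-contradiction argument showing that, under the hypotheses $\mathrm{sec} \ge -1$, $\mathrm{diam}\le d$, $\mathrm{vol}\ge v$, a sequence of short curves $\gamma_i$ with $\omega^+_i(D_{\gamma_i}) \to 2\pi$ forces the limit Alexandrov surface to have a point of total positive curvature $2\pi$, which is impossible. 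This curvature-measure route is specific to dimension two and does not extend to the chart-gluing framework you describe; conversely, your outline is the ``expected'' high-dimensional strategy but, as you note, no one has managed to make the transition maps between strainer charts carry scale-invariant bi-Lipschitz bounds across nested singular strata. So: you have not proved the conjecture, the paper does not claim to, and the only proven instance uses a dimension-specific mechanism orthogonal to your plan.
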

It is worth mentioning that if Conjecture \ref{perelman} holds, then, by a standard contradiction argument, the above $C>1$ can be taken as a constant depending only on the dimension $n$, a uniform upper bound of the diameters of $X_i^n$ and a uniform positive lower bound of $\mathrm{vol}(X_i^n)$.

The following gives a connection between the conjectures above.
\begin{proposition}
     Assume Conjecture \ref{perelman} holds in the special case when $X_i^n$ is a sequence of smooth Riemannian manifolds. Then Conjecture \ref{section} holds true.
\end{proposition}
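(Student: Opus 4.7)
The plan is to argue by contradiction, following the strategy already used in the proof of Theorem \ref{cc}. Suppose Conjecture \ref{section} fails: then there exist $n, d, v, k$ and a sequence of closed Riemannian manifolds $M_i^n$ with $\mathrm{sec}\geq -1$, $\mathrm{diam}\leq d$ and $\mathrm{vol}\geq v$, such that $\mu_k(M_i^n)\to 0$. By Gromov's compactness theorem for Alexandrov spaces with sectional curvature bounded below, after passing to a subsequence we have $M_i^n \stackrel{\mathrm{GH}}{\to} X^n$, where $X^n$ is a compact $n$-dimensional Alexandrov space with $\mathrm{curv}\geq -1$, the non-collapsing $\dim(X^n)=n$ being ensured by $\mathrm{vol}\geq v$. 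Invoking the hypothesized validity of Conjecture \ref{perelman} for smooth Riemannian manifolds, together with the uniform-constant version discussed right after its statement, we obtain a constant $C=C(n,d,v)>1$ and $C$-bi-Lipschitz homeomorphisms $\Phi_i:M_i^n\to X^n$ for every sufficiently large $i$.

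The next step is to transfer this bi-Lipschitz control into spectral control. Fix $i_0$ large and set $F_i := \Phi_{i_0}^{-1}\circ \Phi_i: M_i^n \to M_{i_0}^n$, which is a $C^2$-bi-Lipschitz homeomorphism between smooth Riemannian manifolds. Let $g_i$ denote the smooth metric of $M_i^n$ and let $h_i := F_i^* g_{i_0}$ be the pull-back, which by Rademacher's theorem is an $L^\infty$ Riemannian metric on $M_i^n$ satisfying $C^{-4}g_i\leq h_i\leq C^4 g_i$ almost everywhere. Since $F_i$ is an isomorphism between $(M_i^n, h_i)$ and $(M_{i_0}^n, g_{i_0})$ as (possibly non-smooth) Riemannian measure structures, the Hodge Rayleigh quotients match and hence
\begin{equation}
\mu_k(M_i^n, h_i)=\mu_k(M_{i_0}^n, g_{i_0})=:\mu_{*}>0.
\end{equation}
A direct bi-Lipschitz comparison between the Hodge Rayleigh quotient of $g_i$ and that of $h_i$ on $M_i^n$ (both numerator and denominator being comparable, up to a multiplicative factor depending only on $C, n, k$) then yields $\mu_k(M_i^n, g_i)\geq c(C,n,k)\,\mu_{*}>0$ for every sufficiently large $i$, contradicting $\mu_k(M_i^n)\to 0$.

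The main obstacle in this scheme is the last ingredient, namely the bi-Lipschitz stability of the first positive eigenvalue of the Hodge Laplacian on $k$-forms under an $L^\infty$ perturbation of the metric. For $k=0$ this reduces to the classical bi-Lipschitz invariance (up to constants) of the Poincar\'e constant for functions. For general $k$, the comparisons of the $L^2$-norm on $\Lambda^k T^* M_i^n$ and of the volume form are immediate from $C^{-4}g_i\leq h_i\leq C^4 g_i$, and the same is true for $|d\omega|^2$ since $d$ does not involve the metric; the delicate point is the comparison of $|\delta\omega|^2$, which can be handled either through the $L^2$-duality characterization of $\delta$ against $d$ on Sobolev $k$-forms with $L^\infty$-metric coefficients, or by working directly with the Hodge decomposition on $(M_i^n, h_i)$ viewed as an $L^\infty$ Riemannian structure. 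Granted this ingredient, which is of independent interest and is a quantitative analogue of the spectral convergence statements used elsewhere in the paper, the contradiction argument is concluded and Conjecture \ref{section} follows.
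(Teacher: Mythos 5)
Your high-level strategy matches the paper's exactly: argue by contradiction, pass to a GH-limit Alexandrov space using Gromov compactness, invoke the (hypothesized) Perelman bi-Lipschitz stability together with the observation that the constant can be made uniform, and derive a contradiction with $\mu_k(M_i^n)\to 0$. Where you diverge is in the last ingredient. The paper closes the argument with a single citation: Lott's \cite[Lemma 4.2]{L}, which gives $\liminf_{i\to\infty}\mu_k(M_i^n)>0$ once the $M_i^n$ are uniformly bi-Lipschitz to a fixed $n$-dimensional Alexandrov space. You instead attempt to reprove this spectral stability from scratch by pulling back a reference smooth metric and comparing Rayleigh quotients.

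The place where your sketch has a genuine gap is precisely the step you flag as ``delicate'': the assertion that, once the comparisons of $\|\omega\|^2$, $\|d\omega\|^2$ and $d\mathrm{vol}$ are in hand, only the $|\delta\omega|^2$ term remains to be controlled. This is not the right decomposition of the difficulty. The Rayleigh quotient of $\Delta_{H,k}$ is taken over forms orthogonal to the harmonic space, and that harmonic space changes with the metric; moreover $\delta_g\omega$ and $\delta_h\omega$ are not pointwise comparable (they involve different Hodge stars composed with $d$), so there is no clean ``term-by-term'' comparison of the numerator. The correct route — due to Dodziuk — is to reformulate the nonzero Hodge spectrum in terms of quantities whose only metric dependence is through the $L^2$ norm: one writes $\mu_k=\min\{\lambda_{k-1},\lambda_k\}$, where
\begin{equation*}
\lambda_j:=\inf\Bigl\{\frac{\|d\alpha\|^2_{L^2}}{\inf_{\eta\in\ker d}\|\alpha-\eta\|^2_{L^2}}\ :\ \alpha\in\Omega^j,\ d\alpha\neq 0\Bigr\},
\end{equation*}
which is the Rayleigh quotient of $d$ on the metric-independent quotient $\Omega^j/\ker d$. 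Here $d$ and $\ker d$ do not see the metric, and both $L^2$-norms change by a controlled multiplicative factor under $c^{-1}g\le h\le c\,g$, so $\lambda_j$ and hence $\mu_k$ is bi-Lipschitz stable with constants depending only on $c$, $n$, $k$. This Dodziuk-type comparison also extends to merely $L^\infty$ metrics, which is needed since your $h_i=F_i^*g_{i_0}$ is only $L^\infty$. Without this reformulation (or the direct citation of Lott's lemma, which is exactly the tool tailored to Alexandrov limits), the claim that ``the Hodge Rayleigh quotients match'' and the subsequent comparison is not justified as written. With it, your argument closes and is essentially equivalent to the paper's proof modulo replacing the black-box citation by a hands-on lemma.
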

\begin{proof}
The proof is done by a contradiction argument. If it is not the case, then there exists a sequence $M^n_i$ satisfying (\ref{secpoinc}) such that $M^n_i$ Gromov-Hausdorff converge to a compact Alexandrov space $X^n$ of dimension $n$ and that $C_{P, k}(M^n_i) \to \infty$. Thus $\mu_k(M_i^n) \to 0$. On the other hand, \cite[Lemma 4.2%and Proposition 5.5
]{L} by Lott yields
\begin{equation}
    \liminf_{i\to \infty}\mu_k(M_i^n)>0,
\end{equation}
which is a contradiction.
\end{proof}
In connection with the observation above, let us prove the following.
\begin{theorem}\label{perelman2}
Conjecture \ref{perelman} is valid if $n=2$ and each $X_i^2$ is smooth.
\end{theorem}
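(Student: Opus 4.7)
The plan is to combine Perelman's topological stability theorem with the classical Alexandrov--Reshetnyak theory of surfaces of bounded integral curvature, so as to upgrade a topological identification to a uniformly bi-Lipschitz one.

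By Perelman's stability theorem (in its $2$-dimensional incarnation, going back to Shioya), for $i$ large one has homeomorphisms $h_i : X^2 \to X_i^2$ converging to the identity in the GH sense. As a non-collapsed $2$-dimensional Alexandrov space with $\mathrm{curv}\ge -1$, the limit $X^2$ is a topological $2$-manifold with at most finitely many conical singular points $p_1, \ldots, p_k$ of cone angles $\theta_j \in (0,2\pi)$; finiteness follows from Gauss--Bonnet applied on $X_i^2$ (combined with the uniform upper bound on $\mathrm{vol}(X_i^2)$ coming from the non-collapsing assumption and Bishop--Gromov, together with $K\ge -1$) and the weak-$*$ convergence of the curvature measures.

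Away from the singular set, say on $U_\varepsilon := X^2\setminus \bigcup_j B_\varepsilon(p_j)$, I would reduce to isothermal coordinates and exploit uniform elliptic estimates on the conformal factor (available thanks to $K\ge -1$ and the volume bound) to upgrade GH-convergence to $C^{1,\alpha}$-convergence. This produces diffeomorphisms of $U_\varepsilon$ onto its counterpart in $X_i^2$ with bi-Lipschitz constant $1+o_{i\to\infty}(1)$, controlled only by $\varepsilon$ and the uniform geometric bounds.

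The main step, and what I expect to be the hard part, is to construct, for each singular point $p_j$, a bi-Lipschitz identification of a small disk around $p_j$ in $X^2$ with the corresponding disk around $h_i(p_j)$ in $X_i^2$, uniformly in $i$. A local Gauss--Bonnet argument gives that such a disk in $X_i^2$ absorbs total Gaussian curvature close to $2\pi-\theta_j$, and one then invokes the Alexandrov--Reshetnyak bi-Lipschitz stability for $2$-disks of bounded integral curvature to identify both disks with a fixed truncated model cone $C(S^1_{\theta_j})$ up to a bi-Lipschitz constant depending only on $\theta_j$ and $\varepsilon$. The delicate point is the quantitative control: the curvature of $X_i^2$ may concentrate in elongated sub-regions, and ruling out such ``thin excursions'' requires the volume lower bound together with the diameter upper bound. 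Once the local maps are in place, a standard partition-of-unity gluing on annular overlaps between $U_\varepsilon$ and the disks around the $p_j$'s assembles them into a global $C$-bi-Lipschitz equivalence $X^2 \to X_i^2$, with $C$ depending only on the stated geometric bounds.
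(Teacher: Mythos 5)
Your overall strategy is in the right ballpark (the Alexandrov--Reshetnyak--Burago theory of surfaces of bounded integral curvature is exactly the relevant toolkit), but the argument has a genuine gap precisely where you yourself flag ``the hard part.'' The crux of the theorem is a \emph{uniform} quantitative control preventing positive curvature from concentrating to $2\pi$ inside small disks: the paper's Lemma~\ref{1} shows there exist $\ell(d,v)>0$ and $\epsilon(d,v)>0$ such that every simple closed curve $\gamma$ in $M_i^2$ with $\ell(\gamma)\le\ell$ bounds a disk $D_\gamma$ with $\omega^+(D_\gamma)\le 2\pi-\epsilon$. You correctly identify that without this control one cannot rule out ``thin excursions'' (curvature concentrating near a short loop, which makes the bi-Lipschitz constant blow up), but you defer it rather than prove it. The paper proves this by a contradiction/blow-up argument using the non-branching property of Alexandrov geodesics and the weak-$*$ convergence of curvature measures. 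Once that is in hand, the theorem follows \emph{directly} from Burago's \cite[Theorem~1]{B}, with no need for the regular/singular decomposition and gluing that you propose.

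Beyond the main gap, a few structural claims in your sketch need repair. First, the limit $X^2$ need not have only finitely many conical points: what Gauss--Bonnet bounds is the total angle deficit, so for each $\delta>0$ only finitely many points have deficit $\ge\delta$, but the full set of conical points can be countably infinite. Second, away from the singular set an Alexandrov surface need not be $C^{1,\alpha}$ (the conformal factor is in general only a difference of subharmonic functions, not H\"older-differentiable), so the passage from GH to $C^{1,\alpha}$ convergence on $U_\varepsilon$ is not justified. Third, gluing locally bi-Lipschitz identifications on annular overlaps into a global bi-Lipschitz map is itself nontrivial (bi-Lipschitz maps do not average like smooth charts); this is precisely the type of work that Burago's theorem packages for you, which is why the paper goes through Lemma~\ref{1} plus \cite[Theorem~1]{B} rather than a hand-built atlas.
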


In order to prove this, let us prepare a couple of technical results. In the sequel we assume that the readers are familiar with the fundamental results  on Alexandrov surfaces $X^2$, including the curvature measure $\omega_{X^2}=\omega=\omega^+-\omega^-$. We refer to \cite{AZ, R, Mac,  Shioya, BL, BB, B} for more  details.

Firstly let us provide a quantitative way to construct a disc, whose boundary is a given (short) simple closed curve.
\begin{lemma}\label{lem1}%\footnote{It seems to me that this lemma does not play an essential role below.}
If
\begin{equation}\label{2}
n=2,\quad \mathrm{sec} \ge -1,\quad \mathrm{diam} \le d, \quad \mathrm{vol} \ge v
\end{equation}
then there exists $\tau=\tau(d, v)>0$ such that if a simple closed curve $\gamma$ in $M^2$ satisfies $\ell(\gamma) \le \tau$, then the interior $D_{\gamma}$ of $\gamma$ makes sense and it is a disc. In this case,  set  $D_{\gamma}^+:=M^2 \setminus (\gamma \cup D_{\gamma})$.
\end{lemma}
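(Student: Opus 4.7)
The strategy is a contradiction argument via Gromov--Hausdorff precompactness, reducing the claim to the elementary topological fact that a simple closed curve inside a topological disc bounds a disc. Assume the conclusion fails: there exist surfaces $M_i^2$ satisfying (\ref{2}) and simple closed curves $\gamma_i \subset M_i^2$ with $\ell(\gamma_i) \to 0$ such that $\gamma_i$ does not bound any embedded topological disc in $M_i^2$.

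First, I would apply Gromov precompactness (with $\mathrm{sec} \ge -1$, $\mathrm{diam} \le d$) to pass to a subsequence with $M_i^2 \xrightarrow{\mathrm{GH}} X^2$ for some compact Alexandrov space $X^2$ of $\mathrm{curv} \ge -1$ and $\mathrm{diam} \le d$. The uniform lower bound $\mathrm{vol}(M_i^2) \ge v$, combined with volume continuity along non-collapsing GH limits, forces $X^2$ to be $2$-dimensional. Since $\mathrm{diam}(\gamma_i) \le \ell(\gamma_i)/2 \to 0$, a further subsequence arranges that the curves $\gamma_i$ Hausdorff-collapse, via the GH approximations, to a single point $x^\ast \in X^2$.

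Next I would invoke the classical structure of $2$-dimensional Alexandrov spaces with curvature bounded below: the space of directions at every point is a circle of length at most $2\pi$, so every tangent cone is homeomorphic to $\mathbb{R}^2$ and $X^2$ is a topological surface without boundary. Choose $\epsilon > 0$ with $B_{2\epsilon}(x^\ast) \subset X^2$ homeomorphic to an open disc, and then apply Perelman's stability theorem in its $2$-dimensional form (already essentially contained in Alexandrov--Zalgaller's surface theory and in the work of Shioya): for all sufficiently large $i$ there is a homeomorphism $\Phi_i \colon X^2 \to M_i^2$ which is simultaneously a $\delta_i$-GH approximation with $\delta_i \downarrow 0$. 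Consequently $U_i := \Phi_i\bigl(B_{2\epsilon}(x^\ast)\bigr)$ is an open topological disc in $M_i^2$, and for $i$ large one has $\gamma_i \subset U_i$.

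The Jordan--Schoenflies theorem applied to the simple closed curve $\gamma_i$ inside the topological disc $U_i$ then produces an embedded closed topological disc in $M_i^2$ with boundary $\gamma_i$, contradicting the standing assumption; this disc is precisely $\overline{D_{\gamma_i}}$, with $D_{\gamma_i}^+ := M_i^2 \setminus (\gamma_i \cup D_{\gamma_i})$. The main obstacle, really, is only to quote (or re-prove) the two-dimensional stability statement with sufficient care: once $X^2$ is known to be a topological surface and the $M_i^2$ are known to be homeomorphic to $X^2$ through GH-close homeomorphisms, the remainder is elementary surface topology.
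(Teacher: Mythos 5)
Your proposal is correct and follows essentially the same route as the paper's own (omitted) proof, which the authors describe as ``a direct consequence of Perelman's topological stability theorem\ldots and the Jordan curve theorem, via a contradiction argument.'' You have simply filled in the details: GH precompactness and non-collapsing to get a two-dimensional Alexandrov limit surface, Perelman stability to transfer a small topological disc around the limit point back to $M_i^2$, and Jordan--Schoenflies (the strengthening of Jordan that is actually needed to conclude $D_{\gamma_i}$ is a disc) to derive the contradiction.
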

\begin{proof}
This is a direct consequence of Perelman's topological stability theorem \cite{Pere} (see also \cite{Kapo} by Kapovitch) and the Jordan curve theorem, via a contradiction argument. We omit the details.
\end{proof}
%The next is a key technical  lemma when we will apply a result proved in \cite{B} by Burago later.
Theorem \ref{perelman2} is essentially a direct consequence of the following technical lemma, together with a result of Burago \cite{B}.
\begin{lemma}\label{1}
Under the same assumptions as in Lemma \ref{lem1}, there exist $\ell=\ell(d, v)>0$ and $\epsilon=\epsilon(d, v)>0$ such that if a simple closed curve $\gamma$ in $M^2$ satisfies $\ell(\gamma) \le \ell$, then $\gamma$ bounds a disc $D_{\gamma}$ with $\omega^+(D_{\gamma})\le 2\pi-\epsilon$.
\end{lemma}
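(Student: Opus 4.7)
The plan is to argue by contradiction, pass to a Gromov--Hausdorff limit, and combine Bishop--Gromov with Gauss--Bonnet in order to contradict $\omega^+(D_{\gamma_i})\to 2\pi$. Suppose the lemma fails: then there exist smooth Riemannian surfaces $M_i^2$ satisfying \eqref{2} and simple closed curves $\gamma_i\subset M_i^2$ with $\ell(\gamma_i)\to 0$ and $\omega^+(D_{\gamma_i})\to 2\pi$. By Gromov's compactness theorem, up to a subsequence $M_i^2\to X^2$ in GH, where $X^2$ is a compact $2$-dimensional Alexandrov space with $\mathrm{curv}\ge -1$, $\mathrm{diam}\le d$ and $\mathcal{H}^2(X^2)\ge v$. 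The proof of Lemma \ref{lem1}, which rests on Perelman's topological stability, puts both $\gamma_i$ and $D_{\gamma_i}$ inside a ball $\overline{B_{r_i}(p_i)}\subset M_i^2$ with $r_i\to 0$; after a further subsequence $p_i\to p\in X^2$.

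Next I would invoke the Bishop--Gromov inequality on each $M_i^2$:
\[
\frac{\mathcal{H}^2(B_r(p_i))}{V_{-1}(r)}\ge \frac{\mathrm{vol}(M_i^2)}{V_{-1}(d)}\ge \frac{v}{V_{-1}(d)}, \qquad 0<r\le d,
\]
where $V_{-1}(r)$ denotes the area of a hyperbolic disc of radius $r$. Passing to the non-collapsed GH limit yields the analogous bound in $X^2$, and letting $r\to 0$ gives the uniform lower bound
\[
\theta_p\ge \theta_0(d,v):=\frac{2\pi v}{V_{-1}(d)}>0
\]
on the total angle at $p$ (i.e.\ on the length of the tangent link $\Sigma_p$).

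The final step is Gauss--Bonnet on a ball containing $D_{\gamma_i}$. For $r>0$ small, $B_r(p)$ is a topological disc in $X^2$, hence by Perelman stability $B_r(p_i)$ is a disc in $M_i^2$ for $i$ large, and Gauss--Bonnet reads
\[
\omega_i^+(B_r(p_i))=2\pi-\tau_i(\partial B_r(p_i))+\omega_i^-(B_r(p_i)),
\]
where $\tau_i$ is the turn of the boundary circle. Using the classical convergence theory for Alexandrov surfaces (see e.g.\ \cite{AZ, Shioya, BB, B}), for a.e.\ small $r$ one has $\tau_i(\partial B_r(p_i))\to \tau_\infty(\partial B_r(p))$ as $i\to\infty$, while the local cone structure at $p$ gives $\tau_\infty(\partial B_r(p))\to \theta_p$ as $r\to 0$. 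Since moreover $\omega_i^-(B_r(p_i))\le \mathcal{H}^2(B_r(p_i))\to 0$ as $r\to 0$, we conclude
\[
\lim_{r\to 0}\,\limsup_{i\to\infty}\,\omega_i^+(B_r(p_i))\le 2\pi-\theta_p\le 2\pi-\theta_0(d,v),
\]
and as $D_{\gamma_i}\subset \overline{B_r(p_i)}$ for $i$ large, this contradicts $\omega_i^+(D_{\gamma_i})\to 2\pi$.

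The main technical obstacle I foresee is the convergence of the turns $\tau_i(\partial B_r(p_i))\to \tau_\infty(\partial B_r(p))$ for distance circles along the GH-converging sequence of smooth surfaces to an Alexandrov limit; this is a standard but nontrivial fact in the theory of $2$-dimensional Alexandrov spaces with lower curvature bound, as developed in the cited references. Alternatively, one could encode it through the weak convergence of the signed curvature measures and a Gauss--Bonnet comparison between the approximating and the limit disc, which is the form in which this information is usually exploited in this theory.
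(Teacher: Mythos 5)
Your overall strategy — contradiction, GH compactness, concentration of positive curvature at a single limit point — matches the paper's, but there is a genuine gap at the very first structural step. You assert that ``the proof of Lemma~\ref{lem1} \dots puts both $\gamma_i$ and $D_{\gamma_i}$ inside a ball $\overline{B_{r_i}(p_i)}$ with $r_i\to 0$.'' Lemma~\ref{lem1} only asserts that $D_\gamma$ is a disc; it does not say the disc has small diameter. A priori, a short separating curve might cut $M_i^2$ into two pieces both of definite size, and this must be ruled out before you can place $D_{\gamma_i}$ inside a shrinking ball. The paper treats exactly this scenario as a separate case and derives a contradiction by observing that, if both components of $M_i^2\setminus\gamma_i$ had diameter bounded away from zero, then in the limit every geodesic between fixed points on opposite sides would have to pass through $x_0$, violating the non-branching property of the limit Alexandrov surface $X^2$. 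Your proposal would need to supply such an argument (or otherwise explicitly establish that $D_{\gamma_i}$ shrinks); it also silently omits the remaining dichotomy cases (e.g.\ $D_{\gamma_i}^+$ shrinking instead of $D_{\gamma_i}$).

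On the case in which the disc does shrink, your argument via Gauss--Bonnet on $B_r(p_i)$ together with the convergence of boundary turns $\tau_i(\partial B_r(p_i))\to\tau_\infty(\partial B_r(p))$ is different from the paper's, and, as you yourself flag, rests on a nontrivial convergence statement for the turn of distance circles along a sequence of smooth surfaces converging to an Alexandrov limit. The paper avoids this entirely: it tests the signed curvature measures $\omega_{M_i^2}$ against a fixed bump function concentrated near $x_0$, uses their weak convergence to $\omega_{X^2}$, splits off the negative part via the uniform bound $K_i^-\le 1$, and directly concludes that $\omega^+_{X^2}(\{x_0\})=2\pi$, which is impossible for a $2$-dimensional Alexandrov space with curvature bounded below. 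Your Bishop--Gromov estimate $\theta_p\ge 2\pi v/V_{-1}(d)$ is a nice quantitative refinement, but is not needed for the contradiction — the qualitative fact that $\omega^+_{X^2}(\{x_0\})<2\pi$ at every point already suffices. The ``alternative'' you mention at the end, i.e.\ encoding the boundary-turn information through weak convergence of curvature measures plus a Gauss--Bonnet comparison, is effectively the cleaner and better-documented route, and is essentially what the paper does for this case.
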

\begin{proof}
The proof is done by a contradiction argument. If it is not the case, then there exist:
\begin{enumerate}
\item a sequence of closed smooth surfaces $M_i^2$ with curvature bounded below by $-1$, satisfying
\begin{equation}\label{3}
M_i^2 \stackrel{\mathrm{GH}}{\to} X^2
\end{equation}
for some compact Alexandrov space $X^2$ of dimension $2$;
\item  a sequence of simple closed curves $\gamma_i$ in $M_i^2$ with $\ell(\gamma_i) \to 0$ and $\omega_i^+(D_{\gamma_i}) \to 2\pi$. 
\end{enumerate}

In order to keep notation short, we assume that $M_i^2$ and $X^2$ are isometrically embedded in an ambient metric space $Z$ realising the GH-convergence, i.e.\;$M_i^2$ converge to $X^2$ in Hausdorff distance sense, as subsets of $Z$.

Up to a subsequence,  we can assume that $\gamma_i\to x_0 \in X^2$.
Let us divide the proof into the following cases:
\smallskip

\textit{(I) The case when $\limsup_{i\to \infty} \mathrm{diam} (D_{\gamma_i})>0$ and $\limsup_{i\to \infty} \mathrm{diam} (D_{\gamma_i}^+)>0$ .}

Then after passing to a subsequence we can find a sequence of points $x_i \in D_{\gamma_i}, y_i \in D_{\gamma_i}^+$ with $$\liminf_{i\to \infty}\dist_{M_i^2}(x_i, \gamma_i)>0 \quad \text{and}\quad\liminf_{i\to \infty}\dist_{M_i^2}(y_i, \gamma_i)>0.$$ With no loss of generality, we can assume $x_i \to x \in X^2$ and $y_i \to y \in X^2$ with respect to the metric of $Z$. Then by definition of $x_0$, for all $r<\min\{\dist_{X^2}(x, x_0), \dist_{X^2}(y, x_0)\}$,  $z \in B_r(x)$ and $w \in B_r(y)$, we see that any geodesic from $z$ to $w$ contains $x_0$ as an interior point.    This contradicts the non-branching property for geodesics in $X^2$, which is a consequence of the Alexandrov lower sectional curvature bound on $X^2$.
\smallskip

\textit{(II) The case when $\lim_{i\to \infty} \mathrm{diam} (D_{\gamma_i})=0$ and $\limsup_{i\to \infty} \mathrm{diam} (D_{\gamma_i}^+)>0$ .}

Fix $\phi \in C(X^2)$ with $0 \le \phi \le 1$, $\phi|_{B_{\frac{3r}{2}}(x_0)} \equiv 1$ and $\supp (\phi) \subset B_{2r}(x_0)$. For any $i$, fix $x_i \in \gamma_i$, and then find a uniformly convergent sequence $\phi_i \in C(M_i^2)$ to $\phi$ with $0 \le \phi_i \le 1$,  $\phi_i|_{B_r(x_i)}\equiv 1$ and $\supp \phi_i \subset B_{2r}(x_i)$. The weak convergence of the curvature measures gives
\begin{equation}\label{eq:Mitoomega}
\int_{M_i^2}\phi_i\di \omega_{M_i^2} \to \int_{X}\phi\di \omega_{X^2}=\int_{B_{2r}(x_0)}\phi\di \omega_{X^2}
\end{equation}
On the other hand, as $i \to \infty$,
\begin{align}\label{10}
\int_{M_i^2}\phi_i\di \omega_{M_i^2}&=\int_{B_{2r}(x_i)}\phi_iK_i^+\di \mathrm{vol}-\int_{B_{2r}(x_i)}\phi_iK_i^-\di \mathrm{vol} \nonumber \\
&\geq \left( \int_{D_{\gamma_i}}\phi_iK_i^+\di \mathrm{vol} \right) -  \mathrm{vol} (B_{2r}(x_i)) , \quad \text{(by $\ell(\gamma_i) \to 0$),} \nonumber \\
&\to 2\pi - \haus^2(B_{2r}(x_0)), \quad \text{(by $\omega_i^+(D_{\gamma_i}) \to 2\pi$),}
\end{align}
where, to obtain the second line,  we exploited the assumption that the sectional curvatures are bounded below by $-1$.
%{\color{blue}Recalling our assumption $\ell(\gamma_i) \to 0$ and $\omega_i^+(D_{\gamma_i}) \to 2\pi$,} choosing a sequence $r_i\to 0$ such that  $D_{\gamma_i} \subset B_{r_i}(x_0)$, we see that
%\begin{equation}\label{eq:to2pi}
 %\left( \int_{B_{2r_i}(x_i)}\phi_iK_i^+\di \mathrm{vol} \right) -  \mathrm{vol} (B_{2r_i}(x_i))\to 2 \pi.
%\end{equation}
Also, it holds that
\begin{align}\label{eq:omega+phi}
\int_{B_{2r}(x_0)}\phi\di \omega_{X^2}&=\int_{B_{2r}(x_0)}\phi\di \omega^+_{X^2}-\int_{B_{2r}(x_0)}\phi\di \omega^-_{X^2} \le \int_{B_{2r}(x_0)}\phi\di \omega^+_{X^2} \nonumber\\ 
&\le \omega^+_{X^2}(B_{2r}(x_0)).
\end{align}
Combining  \eqref{eq:Mitoomega}, \eqref{10}, %\eqref{eq:to2pi} 
and \eqref{eq:omega+phi}, we obtain that
 $\omega^+_{X^2}(B_{2r}(x_0)) \to 2\pi$ as $r \to 0$, that is $\omega^+_{X^2}(\{x_0\})=2\pi$. As observed in \cite{Mac} (see also page 287 of \cite{AB} by Ambrosio-Bertrand), this is in contradiction with the fact that $X^2$ is 2-dimensional Alexandrov space with curvature bounded below. 
\smallskip

\textit{(III) Remaining cases.}
\\The remaining cases lead to a contradiction by analogous arguments. The proof is thus complete.
\end{proof}
We are now in a position  to prove Theorem \ref{perelman2}.

\begin{proof}[Proof of Theorem \ref{perelman2}.]
This is a direct consequence of \cite[Theorem 1]{B} and Lemma \ref{1}. %Moreover it follows from (1), Perelman's topological stability theorem and \cite[Lemma 4.2 and Proposition 5.5]{L} by Lott that (2) holds.
\end{proof}

\subsection{Related to the gap metric-rigidity}

\begin{question}\label{EH}
    If a complete $4$-manifold $M^4$ with non-negative Ricci curvature satisfies $\mathrm{AVR}=\frac{1}{2}$, then is $M^4$ isometric to the Eguchi-Hanson metric? 
\end{question}
Note that the answer to Question \ref{EH} is affirmative if the Riemannian metric is K\"ahler.

\begin{question}
    It is possible to classify Einstein $4$-manifolds with $\mathrm{Ric}\equiv 3$ and $\mathrm{vol}\ge\mathrm{vol}(\mathbb{S}^4)/2$?
\end{question}

\subsection{Related to the gap topological-rigidity}

\begin{question}
Let $X^4$ be as in Theorem \ref{thm:RCDRN}. Is it true $X^4$ is homeomorphic to $\mathbb{R}^4$?
\end{question}

Even the smooth counterpart of such a question seems to be an interesting problem:

\begin{question}
Let $M^4$ be a 4-dimensional smooth complete Riemannian manifold with 
$$
\mathrm{Ric}\geq 0 \quad\text{and}\quad \mathrm{AVR}>1/2.
$$
 Is it true $M^4$ is homeomorphic to $\mathbb{R}^4$?  
 \\In the affirmative case, is $M^4$ diffeomorphic to $\mathbb{R}^4$? 
\end{question}

\begin{question}
For each $n\geq 4$, determine the minimal constant $C_{\geq,1}(n)$ with the following property.
If $M^n$ is an $n$-dimensional smooth closed Riemannian manifold with 
$$
\mathrm{Ric}\geq n-1 \quad\text{and}\quad \mathrm{vol}> C_{\geq,1}(n)\, \mathrm{vol}(\mathbb{S}^n),
$$
then $M^n$ is homeomorphic (resp.\,diffeomorphic) to $\mathbb{S}^n$.   
\end{question}
Note that Remark \ref{FS} implies that $C_{\geq,1}(4)\geq 3/4$.

Finally it is of course very interesting to investigate whether the corresponding non-smooth analogues of all of the above are valid or not.
\bigskip

\textbf{Acknowledgement.} This work was conducted during the event, \textit{School and Conference on Metric Measure Spaces, Ricci Curvature, and Optimal Transport}, at Villa Monastero, Lake Como. We would
like to express our gratitude to the organizers of the conference  for the beautiful and stimulating environment, and  to Gilles Carron for suggesting useful references. Moreover, we wish to thank John Lott for pointing out the current Proposition \ref{dim2poincare}, with its simple proof, and Shengxuan Zhou for pointing out Remark \ref{FS}. The first named author expresses his appreciation to Kota Hattori for his suggestions on the validity of Corollary \ref{cor:spectralgap4} for $2$-forms. Finally, we wish to thank the referee for his/her careful readying with valuable comments for a revision.

The first named author acknowledges support of the Grant-in-Aid for Scientific Research (B) of 20H01799, the Grant-in-Aid for Scientific Research (B) of
21H00977 and Grant-in-Aid for Transformative Research Areas (A) of 22H05105.

The second named author acknowledges support by the European Research
Council (ERC), under the European Union Horizon 2020 research and innovation programme, via the ERC Starting Grant “CURVATURE”, grant agreement
No. 802689.


\begin{thebibliography}{GMS13}


\bibitem[AZ62]{AZ}
	\textsc{A. D. Aleksandrov and V. A. Zalgaller}:
	\textit{Two-dimensional manifolds of bounded curvature}.
	  Trudy Mat. Inst. Steklov. 63 (1962), 262 pp.; English transl., \textit{Intrinsic geometry of surfaces}, Transl.
Math. Monogr., vol. 15, Amer. Math. Soc., Providence, RI, 1967, 327 pp.

\bibitem[AB16]{AB}
	\textsc{L. Ambrosio, J. Bertrand}:
	\textit{On the Regularity of Alexandrov Surfaces with Curvature Bounded Below}.
	 Analysis and Geometry in Metric Spaces, \textbf{4}, (2016), 282–287.
	 
	 \bibitem[A89]{Anderson}
	\textsc{M. Anderson}:
	\textit{Ricci curvature bounds and Einstein metrics on compact manifolds}.
	  J. Amer. Math. Soc. \textbf{2} no. 3. (1989), 455–490.

   \bibitem[A90]{Anderson2}
	\textsc{M. Anderson}:
	\textit{Convergence and rigidity of manifolds under Ricci curvature
bounds},
	  Invent. Math. \textbf{102}(2) (1990), 429-455.

\bibitem[AC92]{AC}
	\textsc{M. Anderson, J. Cheeger}:
	\textit{$C^{\alpha}$-compactness for manifolds with Ricci curvature and injectivity radius bounded below}. J. Diff. Geom. \textbf{35}, (1992), 265--281.

\bibitem[B87]{Besse}
          \textsc{A. L. Besse}:
          \textit{Einstein manifolds}.
          Classics in Mathematics. Berlin: Springer, (1987), ISBN 3-540-74120-8.

\bibitem[BL03]{BL}
	\textsc{M. Bonk and U. Lang}:
	\textit{Bi-Lipschitz parametrization of surfaces}.
	 Math. Ann. \textbf{327} (2003). 135-169.



\bibitem[BPS24]{BPS}
	\textsc{E. Bru\`e, A. Pigati, D. Semola}:
	\textit{Topological regularity and stability of noncollapsed spaces with Ricci curvature bounded below}.
	 arXiv:2405.03839, (2024).
  
	 \bibitem[BB04]{BB}
	\textsc{A. Belen'kii, Y. Burago}:
	\textit{Bi-Lipschitz-equivalent Aleksandrov surfaces I}.
	Algebra i Analiz \textbf{16} (2004),
no. 4, 24–40; English transl. in St. Petersburg Math. J. \textbf{16} (2005), no. 4.

\bibitem[B05]{B}
	\textsc{Y. Burago}:
	\textit{Bi-Lipschitz-equivalent Aleksandrov surfaces II}.
	St. Petersburg Math. J. \textbf{16} (2005), 943-960.

    \bibitem[B82]{Buser}
	\textsc{P. Buser}:
	\textit{A note on the isoperimetric constant}.
	Ann. Sci. \'Ecole Norm. Sup., (4), 15, (1982), no. 2, pp. 213-230.


\bibitem[CM21]{Cavalletti-Milman}
 \textsc{F. Cavalletti, E. Milman}:
 \textit{The globalization theorem for the curvature-dimension condition}. Invent. Math. \textbf{226}, (2021) No. 1, 1--137.

 
	\bibitem[CM17a]{CM1}
          \textsc{F. Cavalletti, A. Mondino}:
         \textit{Sharp and rigid isoperimetric inequalities in metric-measure spaces with lower Ricci curvature bounds}.  
         Invent. Math. \textbf{208}, (2017), 803–849.
         
         \bibitem[CM17b]{CM2}
          \textsc{F. Cavalletti, A. Mondino}:
         \textit{Sharp geometric and functional inequalities in metric measure spaces with lower Ricci curvature bounds}.  
         Geom. Topol. \textbf{21} (2017), 603-645.

         
         \bibitem[CC96]{CheegerColding}
          \textsc{J. Cheeger, T. H. Colding}:
         \textit{Lower Bounds on Ricci Curvature and the Almost Rigidity of Warped Products}.  Ann. of Math. \textbf{144} (1996), 189-237.
         
         
	
	\bibitem[CC97]{CheegerColding1}
          \textsc{J. Cheeger, T. H. Colding}:
         \textit{On the structure of spaces with Ricci curvature bounded below. I}.  
         J. Differential Geom. \textbf{46} (1997), 406--480.
         
         
\bibitem[ChC00]{CheegerColding3}
          \textsc{J. Cheeger, T. H. Colding}:
          \textit{On the structure of spaces with Ricci curvature bounded below III}.  
         J. Differential Geom.  \textbf{54} (2000), 37--74.

\bibitem[CJN21]{CheegerJiangNaber}
\textsc{J. Cheeger, W. Jiang, A. Naber}, 
\textit{Rectifiability of singular sets of non collapsed limit spaces with Ricci curvature bounded below}. 
Ann. of Math. \textbf{193} (2021), 407-538.

\bibitem[CN15]{CheegerNaber}
\textsc{J. Cheeger, A. Naber}, 
\textit{Regularity of Einstein manifolds and the codimension $4$
conjecture}. 
Ann. of Math. \textbf{182} (2015), 1093-1165.

\bibitem[C96]{Coldingsphere}
          \textsc{T. H. Colding}:
          \textit{Shape of manifolds with positive Ricci curvature.}  Invent. Math. \textbf{124} (1996), 175–191.

\bibitem[C97]{Colding}
          \textsc{T. H. Colding}:
          \textit{Ricci curvature and volume convergence.}  Ann. of Math. \textbf{145} (1997), 477--501.
          
          \bibitem[CC90]{CC}
          \textsc{B. Colbois, G. Courtois}:
          \textit{A note on the first nonzero eigenvalue of the Laplacian acting on $p$-forms.} Manuscripta Math. \textbf{68} (1990),
143-160.


\bibitem[DPG16]{DePhGigGAFA}
\textsc{G. De Philippis, N. Gigli:}
\textit{From volume cone to metric cone in the nonsmooth setting.} Geom. Funct. Anal. \textbf{26} (2016), No. 6, 1526-1587.

          
          \bibitem[G18]{Gigli}
        \textsc{N. Gigli}:
        \textit{Nonsmooth differential geometry --
An approach tailored for spaces with Ricci curvature bounded from below.}
         Mem. Amer. Math. Soc. \textbf{251} (2018), no. 1196. 


         \bibitem[GP93]{GrPet}
        \textsc{K. Grove, P. Petersen}:
        \textit{A radius sphere theorem.}
        Invent. Math. \textbf{112}, (1993), No. 3, 577--583  
         
          \bibitem[H17]{Hondas}
        \textsc{S. Honda}:
        \textit{Spectral convergence under bounded Ricci curvature}  J. Funct. Anal. \textbf{273} (2017), 1577-1662.
         
         \bibitem[H18]{Honda}
        \textsc{S. Honda}:
        \textit{Elliptic PDEs on compact Ricci limit spaces and applications.}  Mem. Amer. Math. Soc. \textbf{253} (2018), no. 1211, v+92 pp.
        
       \bibitem[HM21]{HonM}
        \textsc{S. Honda, I. Mondello}:
        \textit{Sphere theorems for $\mathsf{RCD}$ and stratified spaces.}  Ann. Sc. Norm. Super. Pisa Cl. Sci. (5)
Vol. XXII (2021), 903-923
         
         \bibitem[HKMPR24]{HKMPR}
          \textsc{S. Honda, C. Ketterer, I. Mondello, R. Perales, C. Rigoni}:
          \textit{Gromov-Hausdorff stability of tori under Ricci and integral scalar curvature bounds.} Nonlinear Anal. \textbf{249}, (2024). 113629.
          
          
\bibitem[HM24]{HM}
          \textsc{S. Honda, A. Mondino}:
          \textit{Poincar\'e inequality for one forms on four manifolds with bounded Ricci curvature}.  Arch. Math. (2025). https://doi.org/10.1007/s00013-024-02091-w.


          
          
          
\bibitem[K07]{Kapo}
        \textsc{V. Kapovitch}:
        \textit{Perelman's stability theorem}.
        Surveys in differential geometry. Vol. XI, 103–136, Surv. Differ. Geom., 11, Int. Press, Somerville, MA, 2007.

    \bibitem[KM21]{KapMon21}
          \textsc{V. Kapovitch, A. Mondino}:
          \textit{On the topology and the boundary of $N$-dimensional $\mathsf{RCD}(K,N)$ spaces}. Geometry \& Topology, \textbf{25}, (2021), 445-495.    


        \bibitem[K15]{Ketterer}
\textsc{C. Ketterer}:
\textit{Cones over metric measure spaces and the maximal diameter theorem}. 
J. Math. Pures Appl. (9) \textbf{103} (2015), no. 5, 1228–1275. 


\bibitem[LY80]{LY}
       \textsc{P. Li, S.-T. Yau}:
       \textit{Estimates of eigenvalues of a compact Riemannian manifold}.
        In: Geometry of the Laplace operator (Proc. Sympos. Pure Math., Univ. Hawaii, Honolulu, Hawaii, 1979), Proceedings of Symposia in Pure Mathematics, vol. XXXVI, pp. 205–239. American Mathematical Society, Providence, RI (1980)



\bibitem[L24]{Li24}
       \textsc{Z. Li}:
       \textit{The globalization theorem for $\mathsf{CD}(K,N)$ on locally finite spaces}.
       Ann. Mat. Pura Appl. (4) \textbf{203}, (2024), No. 1, 49--70.

       
       \bibitem[L58]{Lic}
       \textsc{A. Lichnerowicz}:
       \textit{G\'eom\'etrie des groupes de transformations}.
        Travaux et
Recherches Math\'ematiques, III. Dunod, Paris, 1958.


     
\bibitem[L18]{L}
       \textsc{J. Lott}:
       \textit{Eigenvalue estimates and differential form Laplacians on Alexandrov spaces}.
       Math. Ann. \textbf{371} (2018), 1737--1767.


\bibitem[LV07]{LVJFA}
\textsc{J. Lott, C. Villani}:
\textit{Weak curvature conditions and functional inequalities}. 
J. Funct. Analysis \textbf{245} (2007), No. 1, 311--333.


       \bibitem[LV09]{LottVillani}
\textsc{J. Lott, C. Villani}:
\textit{Ricci curvature for metric-measure spaces via optimal transport}. 
Ann. of Math. \textbf{169} (2009), 903--991.



\bibitem[LS23]{LytStad}
\textsc{A. Lytchak, S. Stadler}:
\textit{Ricci curvature in dimension 2}.
J. Eur. Math. Soc. (JEMS) \textbf{25} (2023), No. 3, 845-867.  

       \bibitem[M98]{Mac}
       \textsc{Y. Machigashira}:
       \textit{The Gaussian curvature of Alexandrov surfaces}.
    J. Math. Soc. Japan \textbf{50}, No. 4, (1998), 859-878.

    

\bibitem[MT85]{MarTop85Rus}
\textsc{V.B. Marenich, V.A. Toponogov:} \textit{Open manifolds of nonnegative Ricci curvature with rapidly growing volume}. Sib. Mat. Zh. \textbf{26}, No. 4(152), 191--194.  

   \bibitem[MT91]{MarTop91Eng}
    \textsc{V.B. Marenich, V.A. Toponogov:} \textit{Open manifolds of nonnegative curvature}. J. Math. Sci. \textbf{55}, (1991), 2115--2130. 
       
       \bibitem[M08]{M}
       \textsc{T. Mantuano}:
       \textit{Discretization of Riemannian manifolds applied to the Hodge Laplacian}.
       Amer. J. Math. \textbf{130}, No. 6, (2008), 1477-1508.

\bibitem[MW19]{MW19}
       \textsc{A. Mondino, G. Wei}:
       \textit{On the universal cover and the fundamental group of an $\mathsf{RCD}^*(K,N)$-space.}
       J. Reine Angew. Math. \textbf{753} (2019), 211--237. 

       
        \bibitem[PW22]{PW}
\textsc{J. Pan, G. Wei}:
\textit{Semi-local simple connectedness of noncollapsing Ricci limit spaces}, 
J. Eur. Math. Soc. \textbf{22} (2024), 4027-4062.
       
       \bibitem[P91]{Pere}
\textsc{G. Perelman}:
\textit{Alexandrov spaces with curvatures bounded from below. II}, 
preprint.

\bibitem[Pt87]{peters}
\textsc{S. Peters}:
\textit{Convergence of Riemannian manifolds}. Compositio Math., \textbf{62} (1987), 3-16.

\bibitem[R12]{Rajala}
        \textsc{T. Rajala}:
        \textit{Local Poincar\'e inequalities from stable curvature conditions on metric spaces}.
        Calc. Var. Partial Differential Equations \textbf{44}(3) (2012), 477--494.
        
        \bibitem[R89]{R}
        \textsc{Yu. G. Reshetnyak}:
        \textit{Two-dimensional manifolds of bounded curvature}.
        Geometry, 4, Itogi Nauki
i Tekhniki Ser. Sovrem. Probl. Mat. Fund. Naprav., vol. \textbf{70}, VINITI, Moscow, 1989, pp. 7–189;
English transl., Encyclopaedia Math. Sci., vol. \textbf{70}, Springer, Berlin, 1993, pp. 3–163.
        
        \bibitem[S56]{Satake}
\textsc{I. Satake}:
        \textit{On a generalization of the notion of manifold},
 Proc. Natl. Acad. Sci. USA \textbf{42} (1956), no. 6, 359-363. 
 
 \bibitem[S99]{Shioya}
\textsc{T. Shioya}:
        \textit{The limit spaces of two-dimensional manifolds with uniformly bounded integral curvature}.
 Trans. Amer. Math. Soc. \textbf{351} (1999), 1765–1801.

 \bibitem[SX24]{SX}
 \textsc{C. Si, S. Xu},
 \textit{Rigidity for Einstein manifolds under bounded covering geometry}. arXiv:2409.15707, (2024).

 \bibitem[S72]{Sieb}
 \textsc{L.C. Siebenmann},
 \textit{Deformation of homeomorphisms on stratified sets}. Comment. Math.
Helv. \textbf{47} (1972), 123-163

 \bibitem[SW01]{SormaniWei}
        \textsc{C. Sormani, G. Wei }:
        \textit{Hausdorff convergence and universal covers}.
Trans. Amer. Math. Soc. \textbf{353} no. 9 (2001), 3585-3602.

\bibitem[S06a]{Sturm06}
	\textsc{K.-T. Sturm}:
	\textit{On the geometry of metric measure spaces, I}.
	Acta Math. \textbf{196} (2006), 65--131.

\bibitem[S06b]{Sturm06b}
	\textsc{K.-T. Sturm}:
	\textit{On the geometry of metric measure spaces, II}.
	Acta Math. \textbf{196} (2006),  133--177.

 \bibitem[T74]{Tachibana}
        \textsc{S. Tachibana}:
        \textit{A theorem on Riemannian manifolds of positive curvature operator}.
Proc. Japan. Acad. \textbf{50} (1974), 301-302.


\bibitem[T01]{Takahashi}
\textsc{J. Takahashi}:
        \textit{On the gap between the first eigenvalues of the Laplacian on functions and $1$-forms},
 J. Math. Soc. Japan \textbf{53} (2001), no. 2, 307-320. 


\bibitem[W24]{W}
        \textsc{J. Wang}:
        \textit{$\RCD^*(K, N)$ spaces are semi locally simply connected}.
J. Reine Angew. Math.  \textbf{806} (2024), 1-7.
\end{thebibliography}
\end{document}